\providecommand{\U}[1]{\protect\rule{.1in}{.1in}}
\newtheorem{theorem}{Theorem}[section]
\newtheorem{acknowledgement}[theorem]{Acknowledgement}
\newtheorem{corollary}[theorem]{Corollary}
\newtheorem{example}[theorem]{Example}
\newtheorem{lemma}[theorem]{Lemma}
\newtheorem{proposition}[theorem]{Proposition}
\newenvironment{proof}[1][Proof]{\noindent\textbf{#1.} }{\ \rule{0.5em}{0.5em}}
\begin{document}

\title{The Multiplicative Ideal Theory of Leavitt Path Algebras}
\author{Kulumani M. Rangaswamy\\Department of Mathematics, \\University of Colorado at Colorado Springs\\Colorado Springs, Colorado 80918\\E-mail: krangasw@uccs.edu }
\date{}
\maketitle

\begin{abstract}
It is shown that every Leavitt path algebra $L$ of an arbitrary directed graph
$E$ over a field $K$ is an arithmetical ring, that is, the distributive law
$A\cap(B+C)=(A\cap B)+(A\cap C)$ holds for any three two-sided ideals of
\ $L$. It is also shown that $L$ is a multiplication ring, that is, given any
two ideals $A,B$ in $L$ with $A\subseteq B$, there is always an ideal $C$ such
that $A=BC$, an indication of a possible rich multiplicative ideal theory for
$L$. Existence and uniqueness of factorization of the ideals of $L$ as
products of special types of ideals such as prime, irreducible or primary
ideals is investigated.\ The irreducible ideals of $L$ turn out to be
precisely the primary ideals of $L$. It is shown that an ideal $I$ of $L$ is a
product of finitely many prime ideals if and only the graded part $gr(I)$ of
$I$ is a product of prime ideals and $I/gr(I)$ is finitely generated with a
generating set of cardinality no more than the number of distinct prime ideals
in the prime factorization of $gr(I)$. As an application, it is shown that if
$E$ is a finite graph, then every ideal of $L$ is a product of prime ideals.
The same conclusion holds if $L$ is two-sided artinian or two-sided
noetherian. Examples are constructed verifying whether some of the well-known
theorems in the ideal theory of commutative rings such as the Cohen's theorem
on prime ideals and the characterizing theorem on ZPI rings hold for Leavitt
path algebras.

\end{abstract}

\section{Introduction and Preliminaries.}

The ideals of a Leavitt path algebra $L$ of an arbitrary directed graph $E$
over a field $K$ seem to possess many desirable properties. For instance every
finitely generated ideal of $L$ is a principal ideal (see \cite{R2}) and that
the multiplication of ideals in $L$ is commutative (see the forthcoming book
\cite{AAS}). The last statement \ is somewhat surprising since Leavitt path
algebras are highly non-commutative. We shall give a direct proof of this
important result. Using these as a starting point, the main theme of this
paper is to explore the multiplicative ideal theory of Leavitt path algebras.
We first show (Theorem \ref{LPA is Arithmetic}) that every Leavitt path
algebra is an \textit{arithmetical ring}, that is, the distributive law
$A\cap(B+C)=(A\cap B)+(A\cap C)$ holds for any three ideals $A,B,C$ of $L$.
Arithmetical rings were introduced by L. Fuchs \cite{F-1} and commutative
arithmetical rings possess interesting properties allowing representations of
ideals as intersections/products of special types of ideals such as the prime,
irreducible, primary or primal ideals (see, e.g., \cite{F-2}, \cite{F-3},
\cite{F-4}, \cite{F-5}). Integral domains which are arithmetical rings have
come to be known as Pr\"{u}fer domains, a class of rings for which there are
over a hundred interesting characterizing properties.

One consequence of Theorem \ref{LPA is Arithmetic} is that the Chinese
remainder theorem holds in $L$. As another consequence, we show that $L$ is a
multiplication ring (Theorem \ref{LPAs are Multiplication rings}), a property,
first introduced by W. Krull, that is useful in factorizing ideals (see
\cite{LM}). Recall, a ring $R$ is a \textit{multiplication ring} if given any
two ideals $A,B$ of $R$ with $A\subseteq B$, there is an ideal $C$ such that
$A=BC$. Both the above-mentioned results point to a potential rich theory of
ideal factorizations in $L$. In this connection, factorization of graded
ideals in $L$ seems to influence that of non-graded ideals in $L$ and,
interestingly, this restricts the size of a generating set of the "non-graded
part" of the ideals. Specifically, we show that a non-graded ideal $I$ in a
Leavitt path algebra $L$ is a product of prime ideals if and only if its
graded part $gr(I)$ is a product of graded prime ideals and $I/gr(I)$ is
finitely generated with a generating set of cardinality at most the number of
distinct prime ideals in the prime factorization of $gr(I)$ (Theorem
\ref{gr(I) product of primes => I product of primes}). Products of primary
ideals and irreducible ideals are also considered. It is shown that, unlike in
the case of commutative rings where these two properties are independent, an
ideal $I$ of the Leavitt path algebra $L$ is a primary if and only if $I$ is
irreducible and this case $I=P^{n}$, a power of a prime ideal $P$. If further
$I$ is a graded ideal, then $I$ turns out to be a prime ideal. As an
application of the preceding results, we show that, if $E$ is a finite graph,
then every ideal in the Leavitt path algebra $L_{K}(E)$ is a product of prime
ideals. The same conclusion holds if $L_{K}(E)$ is two-sided artinian or
two-sided noetherian. Examples are constructed illustrating ideal
factorizations in $L$ and also examining whether some of the well-known
theorems in the ideal theory of commutative rings such as the Cohen's theorem
on prime ideals, the characterizing theorem on ZPI rings etc. hold for Leavitt
path algebras.

\section{Preliminaries}

For the general notation, terminology and results in Leavitt path algebras, we
refer to \cite{AAS}, \cite{R1} and \cite{T}. For basic results in associative
rings and modules, see \cite{Lam} and for commutative rings, we refer to
\cite{LM}. We give below an outline of some of the needed basic concepts and results.

A (directed) graph $E=(E^{0},E^{1},r,s)$ consists of two sets $E^{0}$ and
$E^{1}$ together with maps $r,s:E^{1}\rightarrow E^{0}$. The elements of
$E^{0}$ are called \textit{vertices} and the elements of $E^{1}$
\textit{edges}.

A vertex $v$ is called a \textit{sink} if it emits no edges and a vertex $v$
is called a \textit{regular} \textit{vertex} if it emits a non-empty finite
set of edges. An \textit{infinite emitter} is a vertex which emits infinitely
many edges. For each $e\in E^{1}$, we call $e^{\ast}$ a ghost edge. We let
$r(e^{\ast})$ denote $s(e)$, and we let $s(e^{\ast})$ denote $r(e)$.
A\textit{\ path} $\mu$ of length $n>0$ is a finite sequence of edges
$\mu=e_{1}e_{2}\cdot\cdot\cdot e_{n}$ with $r(e_{i})=s(e_{i+1})$ for all
$i=1,\cdot\cdot\cdot,n-1$. In this case $\mu^{\ast}=e_{n}^{\ast}\cdot
\cdot\cdot e_{2}^{\ast}e_{1}^{\ast}$ is the corresponding ghost path. A vertex
is considered a path of length $0$. The set of all vertices on the path $\mu$
is denoted by $\mu^{0}$.

A path $\mu$ $=e_{1}\dots e_{n}$ in $E$ is \textit{closed} if $r(e_{n}%
)=s(e_{1})$, in which case $\mu$ is said to be \textit{based at the vertex
}$s(e_{1})$. A closed path $\mu$ as above is called \textit{simple} provided
it does not pass through its base more than once, i.e., $s(e_{i})\neq
s(e_{1})$ for all $i=2,...,n$. The closed path $\mu$ is called a
\textit{cycle} if it does not pass through any of its vertices twice, that is,
if $s(e_{i})\neq s(e_{j})$ for every $i\neq j$.

An \textit{exit }for a path $\mu=e_{1}\dots e_{n}$ is an edge $e$ such that
$s(e)=s(e_{i})$ for some $i$ and $e\neq e_{i}$.

If there is a path from vertex $u$ to a vertex $v$, we write $u\geq v$. A
subset $D$ of vertices is said to be \textit{downward directed }\ if for any
$u,v\in D$, there exists a $w\in D$ such that $u\geq w$ and $v\geq w$. A
subset $H$ of $E^{0}$ is called \textit{hereditary} if, whenever $v\in H$ and
$w\in E^{0}$ satisfy $v\geq w$, then $w\in H$. A hereditary set is
\textit{saturated} if, for any regular vertex $v$, $r(s^{-1}(v))\subseteq H$
implies $v\in H$.

Given an arbitrary graph $E$ and a field $K$, the \textit{Leavitt path algebra
}$L_{K}(E)$ is defined to be the $K$-algebra generated by a set $\{v:v\in
E^{0}\}$ of pair-wise orthogonal idempotents together with a set of variables
$\{e,e^{\ast}:e\in E^{1}\}$ which satisfy the following conditions:

(1) \ $s(e)e=e=er(e)$ for all $e\in E^{1}$.

(2) $r(e)e^{\ast}=e^{\ast}=e^{\ast}s(e)$\ for all $e\in E^{1}$.

(3) (The "CK-1 relations") For all $e,f\in E^{1}$, $e^{\ast}e=r(e)$ and
$e^{\ast}f=0$ if $e\neq f$.

(4) (The "CK-2 relations") For every regular vertex $v\in E^{0}$,
\[
v=\sum_{e\in E^{1},s(e)=v}ee^{\ast}.
\]
Every Leavitt path algebra $L_{K}(E)$ is a $%
\mathbb{Z}
$\textit{-graded algebra}, namely, $L_{K}(E)=%
{\displaystyle\bigoplus\limits_{n\in\mathbb{Z}}}
L_{n}$ induced by defining, for all $v\in E^{0}$ and $e\in E^{1}$, $\deg
(v)=0$, $\deg(e)=1$, $\deg(e^{\ast})=-1$. Here the $L_{n}$ are abelian
subgroups satisfying $L_{m}L_{n}\subseteq L_{m+n}$ for all $m,n\in%
\mathbb{Z}
$. Further, for each $n\in%
\mathbb{Z}
$, the homogeneous component $L_{n}$ is given by
\[
L_{n}=\{%
{\textstyle\sum}
k_{i}\alpha_{i}\beta_{i}^{\ast}\in L:\text{ }|\alpha_{i}|-|\beta_{i}|=n\}.
\]
An ideal $I$ of $L_{K}(E)$ is said to be a \textit{graded ideal} if $I=$ $%
{\displaystyle\bigoplus\limits_{n\in\mathbb{Z}}}
(I\cap L_{n})$.

A \textit{breaking vertex }of a hereditary saturated subset $H$ is an infinite
emitter $w\in E^{0}\backslash H$ with the property that $0<|s^{-1}(w)\cap
r^{-1}(E^{0}\backslash H)|<\infty$. The set of all breaking vertices of $H$ is
denoted by $B_{H}$. For any $v\in B_{H}$, $v^{H}$ denotes the element
$v-\sum_{s(e)=v,r(e)\notin H}ee^{\ast}$. Given a hereditary saturated subset
$H$ and a subset $S\subseteq B_{H}$, $(H,S)$ is called an \textit{admissible
pair.} Given an admissible pair $(H,S)$, the ideal generated by $H\cup
\{v^{H}:v\in S\}$ is denoted by $I(H,S)$. It was shown in \cite{T} that the
graded ideals of $L_{K}(E)$ are precisely the ideals of the form $I(H,S)$ for
some admissible pair $(H,S)$. Moreover, $L_{K}(E)/I(H,S)\cong L_{K}%
(E\backslash(H,S))$. Here $E\backslash(H,S)$ is a \textit{Quotient graph of
}$E$ where\textit{\ }$(E\backslash(H,S))^{0}=(E^{0}\backslash H)\cup
\{v^{\prime}:v\in B_{H}\backslash S\}$ and $(E\backslash(H,S))^{1}=\{e\in
E^{1}:r(e)\notin H\}\cup\{e^{\prime}:e\in E^{1}$ with $r(e)\in B_{H}\backslash
S\}$ and $r,s$ are extended to $(E\backslash(H,S))^{0}$ by setting
$s(e^{\prime})=s(e)$ and $r(e^{\prime})=r(e)^{\prime}$.

Every graded ideal $I(H,S)$ in $L$ is isomorphic to a Leavitt path algebra of
some graph $F$ (see \cite{RT}) and hence contains \textit{local units}, that
is, to each $a\in I(H,S)$, there is an idempotent $u\in I(H,S)$ such that
$ua=a=au$.

We will also be using the fact that the Jacobson radical (and in particular,
the prime/Baer radical) of $L_{K}(E)$ is always zero (see \cite{AAS}).

Let $\Lambda$ be an arbitrary (possibly, infinite) index set. For any ring
$R$, we denote by $M_{\Lambda}(R)$ the ring of matrices over $R$ whose entries
are indexed by $\Lambda\times\Lambda$ and whose entries, except for possibly a
finite number, are all zero. It follows from the works in \cite{A}, \cite{AM}
that $M_{\Lambda}(R)$ is Morita equivalent to $R$.

Throughout this paper, $E$ will denote an arbitrary graph (with no restriction
on the number of vertices or on the number of edges emitted by each vertex)
and $K$ will denote an arbitrary field. For convenience in notation, we will
denote, most of the times, the Leavitt path algebra $L_{K}(E)$ by $L$.
Finally, we write ideals to denote two-sided ideals and, by a product of
ideals, we shall always mean a product of finitely many ideals. Also $<a>$
denotes the ideal generated by the element $a$.

The following two results will be used in our investigation.

\begin{theorem}
\label{Ideal generated by c^0- c cycle withour exts}(Proposition 3.5,
\cite{AAPS}) Suppose $E$ is an arbitrary graph and $\{c_{t}:t\in T\}$ is the
set of all cycles without exits in $E$. Let $M$ be the ideal of $L_{K}(E)$
generated by the vertices in all the cycles $c_{t}$ with $t\in T$. Then $M$ is
a ring direct sum $M=%
{\displaystyle\bigoplus\limits_{t\in T}}
M_{t}$ where, for each $t$, $M_{t}$ is the ideal generated by the vertices on
the cycle $c_{t}$ and $M_{t}\cong M_{\Lambda_{t}}(K[x,x^{-1}])$ with
$\Lambda_{t}$ a suitable index set depending on $t$.
\end{theorem}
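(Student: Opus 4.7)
The plan is to analyze each no-exit cycle separately, identify the ideal it generates with a matrix ring over $K[x,x^{-1}]$, and then argue that the pieces do not interact. Fix a cycle without exits $c = e_{1}\cdots e_{n}$ based at $v=s(e_{1})$, and write $M_{c}$ for the ideal generated by $c^{0}=\{s(e_{1}),\ldots,s(e_{n})\}$. Since $c$ has no exits, each vertex $w=s(e_{i})$ is regular and emits only $e_{i}$, so the CK-2 relation gives $w=e_{i}e_{i}^{\ast}$. Combining this identity with CK-1, any monomial in $vLv$ collapses to a positive or negative power of $c$ (where $c^{\ast}=e_{n}^{\ast}\cdots e_{1}^{\ast}$ and $c^{\ast}c=v=cc^{\ast}$), yielding the isomorphism $vLv\cong K[x,x^{-1}]$ via $c\mapsto x$.

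Next I would realize $M_{c}$ as matrices over $vLv$. Let $\Lambda$ denote the set of paths $\alpha$ in $E$ ending at $v$ whose final edge, if any, is not the closing edge $e_{n}$ of $c$; this excludes the \emph{cyclic extensions} of shorter paths. The claim is that every monomial generating $M_{c}$ has a unique normal form $\alpha\cdot p(c,c^{\ast})\cdot\beta^{\ast}$ with $\alpha,\beta\in\Lambda$ and $p(c,c^{\ast})\in vLv$, obtained by repeatedly using $w=e_{i}e_{i}^{\ast}$ to pull cycle-edges out of the outer factors and push all winding into the central $vLv$ factor. Combined with the orthogonality $\beta^{\ast}\gamma=\delta_{\beta,\gamma}\,v$ for $\beta,\gamma\in\Lambda$ (which again exploits the absence of exits), this gives matrix units $\alpha\beta^{\ast}$ indexed by $\Lambda\times\Lambda$ and the isomorphism $M_{c}\cong M_{\Lambda}(vLv)\cong M_{\Lambda}(K[x,x^{-1}])$.

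For the directness of the sum, I would observe that a cycle without exits is terminal in $E$: the only edges emitted by cycle vertices lie on the cycle itself, so no vertex of $c_{t}$ can reach anything outside $c_{t}^{0}$. Hence for $t\neq s$ the vertex sets $c_{t}^{0}$ and $c_{s}^{0}$ are mutually unreachable; every element of $M_{t}$ is a sum of monomials $\alpha\beta^{\ast}$ with $r(\alpha),r(\beta)\in c_{t}^{0}$, and the orthogonality of vertex idempotents forces $M_{t}M_{s}=0$. This both gives $M_{t}\cap M_{s}=0$ and confirms the ring-direct-sum structure $M=\bigoplus_{t\in T}M_{t}$.

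I expect the normal-form argument in the second paragraph to be the main obstacle. Choosing $\Lambda$ neither too large (spoiling uniqueness of the decomposition) nor too small (failing to span $M_{c}$) requires care, and the bookkeeping of how cycle-edges at either end of a monomial $\alpha\beta^{\ast}$ absorb into powers of $c$ and $c^{\ast}$ is the technical heart of the proof. Once this combinatorial normal form is established, the matrix-unit verifications and the direct-sum argument are routine.
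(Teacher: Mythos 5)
The paper does not actually prove this statement; it is imported verbatim as Proposition 3.5 of \cite{AAPS}, so your proposal can only be judged on its own terms. Your overall architecture --- identify $vLv$ with $K[x,x^{-1}]$ via $c\mapsto x$ using $cc^{\ast}=v=c^{\ast}c$, realize $M_{c}$ as $M_{\Lambda}(vLv)$ through a normal form $\alpha\,p(c,c^{\ast})\,\beta^{\ast}$, and obtain directness of the sum from the fact that a no-exit cycle cannot reach any vertex outside itself --- is the standard route, and the first and third steps are correct as you state them.

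The gap is precisely in the step you flagged as delicate: your index set $\Lambda$ is too small whenever $c$ has length $n\geq 2$. You take $\Lambda$ to be the paths ending at $v$ whose final edge is not $e_{n}$. But since $s(e_{n})$ emits only $e_{n}$, every path from $s(e_{n})$ to $v$ has the form $e_{n}c^{k}$ and hence ends in $e_{n}$; so $\Lambda$ contains no path with source $s(e_{n})$, and therefore $s(e_{n})\lambda=0$ for every $\lambda\in\Lambda$. It follows that the vertex $s(e_{n})$ --- one of the generators of $M_{c}$ --- cannot lie in $\sum_{\lambda,\mu\in\Lambda}\lambda(vLv)\mu^{\ast}$ (if it did, it would equal $s(e_{n})$ times such an element, which is $0$). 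The correct index set is the set of paths $\alpha$ with $r(\alpha)=v$ that do not admit the \emph{full} cycle $c$ as a terminal segment (i.e., $\alpha\neq\alpha^{\prime}c$); this retains the proper tails $e_{i}\cdots e_{n}$ of $c$, which are exactly what is needed to express $s(e_{i})=(e_{i}\cdots e_{n})(e_{i}\cdots e_{n})^{\ast}$. With that enlarged $\Lambda$ your orthogonality argument survives unchanged: if $\beta^{\ast}\gamma\neq0$ with $\beta\neq\gamma$, one path is the other followed by a nontrivial closed path at $v$, which (no exits) must be $c^{k}$, forcing a terminal copy of $c$; and every path ending at $v$ factors uniquely as $\lambda c^{k}$ with $\lambda\in\Lambda$, which repairs the normal form. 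A further small point: $M_{t}M_{s}=0$ alone does not give $M_{t}\cap M_{s}=0$ in an arbitrary ring; here you should either use that $M_{t}$ has local units or note that $M_{t}\cap M_{s}$ would be a nilpotent ideal in a Leavitt path algebra, whose prime radical is zero.
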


\begin{theorem}
\label{R-2}(Theorem 4, \cite{R2}) Let $I$ be a non-graded ideal of
$L=L_{K}(E)$ with $H=I\cap E^{0}$ and $S=\{u\in B_{H}:u^{H}\in I\}$. Then

(a) $I=I(H,S)+%
{\displaystyle\sum\limits_{t\in T}}
<f_{t}(c_{t})>$ where $T$ is some index set, for each $t\in T$, $c_{t}$ is a
cycle without exits in $E\backslash(H,S)$, $c_{t}^{0}\cap c_{s}^{0}=\emptyset$
for $t\neq s$ and $f_{t}(x)\in K[x]$ is a polynomial of the smallest degree
such that $f_{t}(c_{t})\in I$.

(b) \cite{R2} $I(H,S)$\ is the largest graded ideal inside $I$ and if $I$\ is
a prime ideal, then $I(H,S)$\ is also a prime ideal.
\end{theorem}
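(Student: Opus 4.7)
The plan for part (a) is to reduce to the quotient Leavitt path algebra $L/I(H,S)\cong L_{K}(E\backslash(H,S))$ and then apply Theorem \ref{Ideal generated by c^0- c cycle withour exts}. First, since $H\subseteq I$ and $u^{H}\in I$ for every $u\in S$, the containment $I(H,S)\subseteq I$ is immediate. Writing $F=E\backslash(H,S)$, let $\pi:L\rightarrow L/I(H,S)\cong L_{K}(F)$ be the canonical projection and set $\bar{I}=\pi(I)$. Using the descriptions of $H$ and $S$, I would check that $\bar{I}\cap F^{0}=\emptyset$: a vertex of $F$ is either an original vertex $w\in E^{0}\backslash H$ or a formal symbol $v^{\prime}$ arising from some $v\in B_{H}\backslash S$, and in either case membership in $\bar{I}$ forces $w\in H$ or $v^{H}\in I$ (so $v\in S$), a contradiction.

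Next, I would argue that $\bar{I}$ lies inside the ideal $M$ of Theorem \ref{Ideal generated by c^0- c cycle withour exts} applied to $F$: given any non-zero $x\in\bar{I}$, standard reductions in $L_{K}(F)$ via left and right multiplication by vertices, edges and ghost edges yield an element of the form $g(c)$ supported on a single cycle $c$ in $F$, and the absence of vertices in $\bar{I}$ forces $c$ to have no exits. Using the decomposition $M=\bigoplus_{t\in T}M_{t}$ with $M_{t}\cong M_{\Lambda_{t}}(K[x,x^{-1}])$, one obtains $\bar{I}=\bigoplus_{t}(\bar{I}\cap M_{t})$, and Morita equivalence with the principal ideal domain $K[x,x^{-1}]$ implies each non-zero $\bar{I}\cap M_{t}$ is generated by the image of $f_{t}(c_{t})$ for a polynomial $f_{t}(x)\in K[x]$ of least degree. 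Pulling back through $\pi$ yields the formula $I=I(H,S)+\sum_{t\in T}<f_{t}(c_{t})>$; the disjointness $c_{t}^{0}\cap c_{s}^{0}=\emptyset$ for $t\neq s$ follows because every vertex on a cycle without exits emits exactly one edge, so two such cycles sharing a vertex must coincide.

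For part (b), if $J\subseteq I$ is any graded ideal, writing $J=I(H^{\prime},S^{\prime})$ gives $H^{\prime}=J\cap E^{0}\subseteq H$ immediately, and a short case analysis (handling separately $u\in H$ and $u\notin H$, via the relation $u^{H}=u^{H^{\prime}}+\sum_{r(e)\in H\backslash H^{\prime}}ee^{\ast}$) shows that $u^{H^{\prime}}\in I(H,S)$ for each $u\in S^{\prime}$, so $J\subseteq I(H,S)$. For the prime case, assume $I$ is prime and let $A,B$ be graded ideals with $AB\subseteq I(H,S)\subseteq I$; then by primality of $I$ one of $A,B$ lies in $I$, and by the maximality just established it lies in $I(H,S)$, so $I(H,S)$ is graded-prime. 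I would finish by invoking the standard fact that in a Leavitt path algebra a graded-prime graded ideal is prime. The main obstacle I anticipate is the cycle-extraction step of part (a): the claim that an ideal of $L_{K}(F)$ disjoint from $F^{0}$ is supported on cycles without exits, while intuitive, requires a careful analysis combining Cuntz--Krieger-style uniqueness with explicit reductions of monomials in paths and ghost paths to polynomial form in a single cycle, and the bookkeeping around infinite emitters is where the argument is most delicate.
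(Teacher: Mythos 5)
You should first note that the paper does not prove this statement at all: Theorem \ref{R-2} is imported verbatim from \cite{R2} (Theorem 4 there) and is used as a black box throughout, so there is no in-paper argument to compare yours against. Your outline does follow the strategy of the cited source --- pass to $L/I(H,S)\cong L_{K}(E\backslash(H,S))$, show the image $\bar{I}$ meets no vertices, locate $\bar{I}$ inside the ideal $M$ generated by the vertices on cycles without exits, and then use $M\cong\bigoplus_{t}M_{\Lambda_{t}}(K[x,x^{-1}])$ together with the PID structure of $K[x,x^{-1}]$ to extract the generators $f_{t}(c_{t})$ of least degree. Part (b) as you present it (maximality via the identity $u^{H}=u^{H^{\prime}}+\sum_{r(e)\in H\backslash H^{\prime}}ee^{\ast}$, then graded-primeness plus the fact that a graded ideal which is prime with respect to graded ideals is prime) is sound and is essentially how this is done in the literature.

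The genuine gap is in the step you yourself flag: you assert that for nonzero $x\in\bar{I}$, ``standard reductions\dots yield an element of the form $g(c)$,'' and from this you conclude $\bar{I}\subseteq M$. What the reduction machinery (the Reduction Theorem of Abrams--Aranda Pino--Siles Molina) actually gives is the existence of paths $\alpha,\beta$ with $0\neq\alpha^{\ast}x\beta$ equal to a scalar multiple of a vertex or to $g(c)$ for a cycle $c$ without exits; since $\bar{I}$ contains no vertices this puts a \emph{corner} of $x$ into $M$, not $x$ itself, so it only shows $\bar{I}\cap M\neq0$. Upgrading this to the containment $\bar{I}\subseteq M$ is the real content of Theorem 4 of \cite{R2}: one has to analyze an arbitrary element $\sum k_{i}\alpha_{i}\beta_{i}^{\ast}$ of $\bar{I}$ (or argue in the quotient $L_{K}(F)/M$, where new cycles without exits can appear and the bookkeeping with breaking vertices resurfaces), and your sketch does not supply this. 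As written, the argument establishes $I\supseteq I(H,S)+\sum_{t}<f_{t}(c_{t})>$ and the structure of $\bar{I}\cap M$, but not the reverse inclusion that constitutes assertion (a).
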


Note that if $I$ is the non-graded ideal considered in Theorem \ref{R-2}, then
$I/gr(I)=%
{\displaystyle\bigoplus\limits_{t\in T}}
<f_{t}(c_{t})>$. This is because, in $L/I(H,S)$, $<f_{t}(c_{t})>\subseteq
M_{t}$, the ideal generated by the vertices on the cycle $c_{t}$ and by
Theorem \ref{Ideal generated by c^0- c cycle withour exts}, $%
{\displaystyle\sum}
M_{t}=%
{\displaystyle\bigoplus}
M_{t}$.

\textbf{Notation}: For convenience, $I(H,S)$ will\ also be denoted by $gr(I)$
and we shall call it the graded part of $I$.

\section{Some Properties of the Ideals in Leavitt Path Algebras}

Let $E$ be an arbitrary graph. We begin by describing some special features of
the graded ideals in the Leavitt path algebra $L=L_{K}(E)$. We first show that
an ideal $A$ of $L$ is graded if and only if $A\cap B=AB$ for any ideal $B$ of
$L$. It then follows easily that if $A$ is a graded ideal, then $AB=BA$ for
any ideal $B$ of $L$. Actually, the commutativity property $AB=BA$ holds even
for non-graded ideals. This interesting result is stated and proved in
\cite{AAS} by using a deep structure theorem on ideals of $L_{K}(E)$ and using
a lattice isomorphism with the ideal lattice of $L_{K}(E)$. Since the book
\cite{AAS} is being written up and is yet to be published, and since this
result is very relevant to the multiplicative ideal theory of Leavitt path
algebras that we are investigating, we give below a direct proof of this
important result using the ideas considered here in this paper.

A very useful observation is that if $c$ is a cycle based at a vertex $v$ in
$E$, then $vLv\cong K[x,x^{-1}]$ under an isomorphism mapping $v$ to $1$, $c$
to $x$ and $c^{\ast}$ to $x^{-1}$.

Our first Lemma points out some special characteristics of graded ideals of
$L$.

\begin{lemma}
\label{Product = Intersection} \ (i) An ideal $A$ of $L$ is a graded ideal if
and only if for any ideal $B$ of $L$, $AB=A\cap B$ and $BA=B\cap A$. Thus a
graded ideal $A$ satisfies $A^{2}=A$ and $AB=BA$ for all ideals $B$.

(ii) Suppose $A$ is a graded ideal. Then $A=A_{1}\cdot\cdot\cdot A_{m}$ is a
product of ideals if and only if $A=A_{1}\cap\cdot\cdot\cdot\cap A_{m}$ is an
intersection of the ideals $A_{i}$. In this case, $A=%
{\displaystyle\bigcap\limits_{i=1}^{m}}
gr(A_{i})=$ $%
{\displaystyle\prod\limits_{i=1}^{m}}
gr(A_{i})$.

(iii) If $A_{1},\cdot\cdot\cdot,A_{m}$ are graded ideals of $L$, then $%
{\displaystyle\prod\limits_{i=1}^{m}}
A_{i}=%
{\displaystyle\bigcap\limits_{i=1}^{m}}
A_{i}$.
\end{lemma}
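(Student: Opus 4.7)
My plan is to establish part (i) directly and then derive (iii) and (ii) by formal manipulation.

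For the forward direction of (i), I would exploit the fact, noted in the preliminaries, that every graded ideal $A$ is isomorphic to a Leavitt path algebra of some graph and hence possesses local units: given $a \in A \cap B$ one picks an idempotent $u \in A$ with $ua = a = au$ and writes $a = ua \in AB$ and $a = au \in BA$. Combined with the trivial inclusions $AB, BA \subseteq A \cap B$, this yields both equalities, and specializing $B = A$ gives $A^2 = A$; commutativity $AB = BA$ then follows from $AB = A \cap B = B \cap A = BA$. For the backward direction, taking $B = A$ in the hypothesis forces $A^2 = A$, so the task reduces to showing that no non-graded ideal can be idempotent. Here I would invoke Theorem \ref{R-2} to write a non-graded $A$ as $gr(A) + \sum_{t \in T} \langle f_t(c_t) \rangle$ with each $f_t \in K[x]$ a polynomial of smallest degree satisfying $f_t(c_t) \in A$. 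A short check shows each such $f_t$ must be a non-unit of $K[x,x^{-1}]$: otherwise $f_t(c_t)$ is a scalar multiple of some power $c_t^n$, which together with $(c_t^*)^n$ produces the base vertex of $c_t$ inside $A \cap E^0$, contradicting that $c_t$ survives in the quotient graph $E \setminus (H,S)$. Passing to $L/gr(A)$ and applying Theorem \ref{Ideal generated by c^0- c cycle withour exts}, the images of $\langle f_t(c_t) \rangle$ lie in pairwise orthogonal matrix ideals $M_t \cong M_{\Lambda_t}(K[x,x^{-1}])$ and correspond under Morita equivalence to the principal ideals $(f_t) \trianglelefteq K[x,x^{-1}]$; since $f_t$ is a non-unit in a PID, $(f_t)^2 \subsetneq (f_t)$, forcing $(A/gr(A))^2 \neq A/gr(A)$ and contradicting $A^2 = A$.

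With (i) in hand, I would establish (iii) by induction on $m$: the base $m=2$ is (i) applied to the graded $A_1$ and graded $A_2$, while for the inductive step the intersection $A_2 \cap \cdots \cap A_m$ is graded and equals $A_2 \cdots A_m$ by the inductive hypothesis, so (i) again yields $A_1 (A_2 \cdots A_m) = A_1 \cap (A_2 \cdots A_m)$. Part (ii) then becomes bookkeeping: if $A = A_1 \cdots A_m$ is graded, the inclusions $A \subseteq A_i$ pass to graded parts, giving $A = gr(A) \subseteq gr(A_i)$ for every $i$, hence $A \subseteq \bigcap gr(A_i) = \prod gr(A_i)$ by (iii); the reverse inclusion $\prod gr(A_i) \subseteq \prod A_i = A$ is immediate, yielding $A = \bigcap gr(A_i) = \prod gr(A_i)$. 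A symmetric argument handles the case $A = A_1 \cap \cdots \cap A_m$, so the two characterizations are equivalent.

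The main obstacle is the backward direction of (i): excluding idempotent non-graded ideals is precisely where the structural content of Theorem \ref{R-2} and the Morita-theoretic identification of the summands $\langle f_t(c_t) \rangle$ with principal ideals of $K[x,x^{-1}]$ come to bear. Once the graded-equals-idempotent characterization is in place, everything else in the lemma unfolds mechanically.
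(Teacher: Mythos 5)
Your part (i) is essentially the paper's argument: local units of the graded ideal $A$ give the forward direction, and for the converse one reduces modulo $gr(A)$ and uses Theorem \ref{Ideal generated by c^0- c cycle withour exts} to transfer the idempotency of each $\langle f_t(c_t)\rangle$ to the ideal $(f_t)$ of the domain $K[x,x^{-1}]$, where it fails. You take $B=A$ where the paper takes $B=N=\sum_t\langle f_t(c_t)\rangle$, but both reduce to the same equation $\bar{N}^2=\bar{N}$ in $L/gr(A)$; your explicit check that each $f_t$ is a non-unit of $K[x,x^{-1}]$ (via $(c_t^{\ast})^{n}c_t^{n}=v_t$) is a point the paper leaves implicit, and is a small improvement. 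Part (iii) by induction from (i) is also correct (the paper instead deduces (iii) from (ii)).

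The gap is in part (ii), in the direction ``$A=A_{1}\cdots A_{m}$ implies $A=A_{1}\cap\cdots\cap A_{m}$.'' Your bookkeeping shows $A=\bigcap_{i}gr(A_{i})=\prod_{i}gr(A_{i})$, which only gives $A\subseteq\bigcap_{i}A_{i}$; nothing in your manipulations yields the reverse inclusion $\bigcap_{i}A_{i}\subseteq A$, and the two directions are not in fact symmetric: in ``intersection $\Rightarrow$ product'' the needed inclusion $\prod_{i}A_{i}\subseteq\bigcap_{i}A_{i}$ is automatic, while in ``product $\Rightarrow$ intersection'' the needed inclusion $\bigcap_{i}A_{i}\subseteq\prod_{i}A_{i}$ is not. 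Closing it requires a genuine structural input, not formal manipulation of graded parts: setting $C=\bigcap_{i}A_{i}$, one has $C^{m}\subseteq A_{1}\cdots A_{m}=A\subseteq C$, so $(C/A)^{m}=0$ in $L/A$; since $A$ is graded, $L/A\cong L_{K}(E\backslash(H,S))$ is a Leavitt path algebra and hence has no nonzero nilpotent ideals, forcing $C=A$. That is the paper's argument; alternatively you could note $gr(C)=A$ and rule out nonzero nilpotent summands $\langle f_t(c_t)\rangle$ exactly as in your part (i), but some such step must be added.
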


\begin{proof}
(i) Suppose $A$ is a graded ideal of $L$. Clearly $AB\subseteq A\cap B$. To
prove the reverse inclusion, let $x\in A\cap B$. Since $A$ is a graded ideal,
there is a local unit $u\in A$ satisfying $x=ux=xu$. Then $x=ux\in AB$. So
$A\cap B=AB$. Similarly, $B\cap A=BA$. Hence $AB=BA$. In particular,
$A^{2}=A\cap A=A$.

Conversely, suppose $A\cap B=AB$ for all ideals $B$ in $L$. Suppose, on the
contrary, $A$ is not graded. By Theorem \ref{R-2}, $A=I(H,S)+%
{\displaystyle\sum\limits_{i\in X}}
<f_{i}(c_{i})>$ where each $c_{i}$ is a cycle without exits in $E\backslash
(H,S)$ and $f_{i}(x)\in K[x]$. For convenience, write $N=%
{\displaystyle\sum\limits_{i\in X}}
<f_{i}(c_{i})>$ and $gr(A)=I(H,S)$. Then $AN=(gr(A)+N)N=gr(A)N+N^{2}%
=(gr(A)\cap N)+N^{2}$. Since $AN=A\cap N=N$, $(gr(A)\cap N)+N^{2}=N$ and so
$[N^{2}+gr(A)]/gr(A)=(N+gr(A))/gr(A)$. This means, in the Leavitt path algebra
$\bar{L}=L/gr(A)$,%
\[%
{\displaystyle\bigoplus\limits_{i}}
[<f_{i}(c_{i})>]^{2}=%
{\displaystyle\bigoplus\limits_{i}}
[<f_{i}(c_{i})>].
\]
Since $%
{\displaystyle\bigoplus}
$ is a ring direct sum, we obtain $[<f_{i}(c_{i})>]^{2}=<f_{i}(c_{i})>$. Since
$f_{i}(c_{i})\in v_{i}\bar{L}v_{i}\cong K[x,x^{-1}]$, we get $<f_{i}%
(x)>^{2}=<f_{i}(x)>$ in the integral domain $K[x,x^{-1}]$, a contradiction.
This contradiction shows that $A$ is a graded ideal.

(ii) Suppose $A=A_{1}\cdot\cdot\cdot A_{m}$ is a product of ideals. Now,
$\left(  [A_{1}\cap\cdot\cdot\cdot\cap A_{m}]/A\right)  ^{m}=0$ in $L/A$. On
the other hand, if we write the graded ideal $A$ as $A=I(H,S)$, then $L/A\cong
L_{K}(E\backslash(H,S))$ and so $L/A$ contains no non-zero nilpotent ideals.
Consequently, $[A_{1}\cap\cdot\cdot\cdot\cap A_{m}]/A=0$ or $A=A_{1}\cap
\cdot\cdot\cdot\cap A_{m}$.

Conversely, suppose $A=A_{1}\cap\cdot\cdot\cdot\cap A_{m}$. Since $A$ is
graded, $A=A^{m}$ by (i), and so we get
\[
A=A^{m}\subseteq A_{1}\cdot\cdot\cdot A_{m}\subseteq A_{1}\cap\cdot\cdot
\cdot\cap A_{m}=A\text{. }%
\]
Thus, $A=A_{1}\cdot\cdot\cdot A_{m}$.

Finally, if $A=%
{\displaystyle\bigcap\limits_{i=1}^{m}}
A_{i}$, clearly $%
{\displaystyle\bigcap\limits_{i=1}^{m}}
gr(A_{i})\subseteq A$. On the other hand, since $A$ is graded, $A\subseteq
gr(A_{i})$ for all $i$ and hence $A\subseteq%
{\displaystyle\bigcap\limits_{i=1}^{m}}
gr(A_{i})$. Thus $A=%
{\displaystyle\bigcap\limits_{i=1}^{m}}
gr(A_{i})$ which is equal to $%
{\displaystyle\prod\limits_{i=1}^{m}}
gr(A_{i})$, by (i).

(iii) Now $%
{\displaystyle\bigcap\limits_{i=1}^{m}}
A_{i}$ is a graded ideal and so (iii) follows from (ii).
\end{proof}

\begin{lemma}
\label{graded multiplicatin ring} Suppose $A,B$ are ideals of $L$ with
$A\subseteq B$. If $A\subseteq gr(B)$, then $AB=A=BA$. \ In particular, if $A$
or $B$ is a graded ideal, then $AB=A=BA$.
\end{lemma}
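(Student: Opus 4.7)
The plan is to reduce this to Lemma \ref{Product = Intersection}(i) by pivoting through the graded ideal $gr(B)$. That lemma tells us that for any graded ideal $G$ and any ideal $X$ of $L$, one has $GX = G \cap X$ and $XG = X \cap G$. Applying this with $G = gr(B)$ and $X = A$, and using the hypothesis $A \subseteq gr(B)$, I get
\[
A \cdot gr(B) = A \cap gr(B) = A \quad\text{and}\quad gr(B) \cdot A = gr(B) \cap A = A.
\]

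Now I sandwich $AB$ and $BA$. Since $A$ is a two-sided ideal, $AB \subseteq A$ and $BA \subseteq A$. On the other hand, $gr(B) \subseteq B$, so
\[
A = A \cdot gr(B) \subseteq AB \subseteq A \quad\text{and}\quad A = gr(B) \cdot A \subseteq BA \subseteq A,
\]
forcing $AB = A = BA$, which is the main conclusion.

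For the ``in particular'' clause, the case where $B$ is graded is immediate since then $gr(B) = B$ and the hypothesis $A \subseteq gr(B)$ reduces to $A \subseteq B$, which is given. If instead $A$ is graded, I apply Lemma \ref{Product = Intersection}(i) directly with the roles swapped: $AB = A \cap B = A$ and $BA = B \cap A = A$ (using $A \subseteq B$), bypassing $gr(B)$ altogether.

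There is no real obstacle here: the whole argument is a one-line invocation of the preceding lemma once one notices that $gr(B)$ is the correct graded ideal to insert between $A$ and $B$. The only thing to be careful about is that the lemma is quoted in both orders (left and right multiplication), which is needed because $L$ is noncommutative; but the lemma statement already provides both.
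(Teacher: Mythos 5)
Your proof is correct and follows essentially the same route as the paper: both use Lemma \ref{Product = Intersection}(i) to get $A\cdot gr(B)=A\cap gr(B)=A$ (and the right-handed version) and then sandwich $AB$ and $BA$ between $A\cdot gr(B)$ and $A$. The only cosmetic difference is in the case where $A$ is graded: the paper observes $A\subseteq gr(B)$ (since $gr(B)$ is the largest graded ideal inside $B$) and reuses the first part, while you apply the lemma to $A$ directly; both are fine.
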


\begin{proof}
If $A\subseteq gr(B)$, then $A=A\cap gr(B)=Agr(B)\subseteq AB\subseteq A$. So
$A=AB$. Similarly, $BA=A$. If $A$ is graded, then $A\subseteq gr(B)$. If $B$
is graded, again $A\subseteq B=gr(B)$. From the first part, we then conclude
that $A=AB=BA$.
\end{proof}

Next we give a direct proof of the more general statement that the ideal
multiplication in a Leavitt path algebra is indeed commutative.

We start with the following simple Lemma. In its proof we shall be using the
fact that whenever $p^{\ast}q\neq0$, where $p,q$ are paths in $E$, then the
CK-1 relation in $L_{K}(E)$ implies that either $p=qr$ or $q=ps$ where $r,s$
are suitable paths in $E$.

\begin{lemma}
\label{Pricipal ideals commute}Let $E$ be an arbitrary graph. If $c$ is a
cycle without exits based at a vertex $v$ in $E$ and $f(x),g(x)\in K[x]$,
then
\[
<f(c)><g(c)>=<f(c)g(c)>=<g(c)><f(c)>.
\]

\end{lemma}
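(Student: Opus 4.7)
The strategy is to exploit the observation recorded just before the lemma, that the corner $vLv$ is isomorphic to $K[x,x^{-1}]$ under the map $v\mapsto 1$, $c\mapsto x$, $c^{*}\mapsto x^{-1}$ (this really is an isomorphism because $c$ has no exits). In particular, both $f(c)$ and $g(c)$ lie in the commutative subring $vLv$, so they commute with each other and with any element that already sits inside $vLv$. The whole lemma will fall out of this single commutativity fact, with the idempotent $v$ serving as the device that drags arbitrary elements of $L$ into $vLv$.

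The inclusion $<f(c)g(c)>\subseteq <f(c)><g(c)>$ is immediate: the generator $f(c)g(c)$ is already a product of an element of $<f(c)>$ with an element of $<g(c)>$, hence belongs to the product ideal. For the reverse inclusion, a typical element of $<f(c)><g(c)>$ is a finite sum of terms of the form $\alpha\, f(c)\,\beta\, g(c)\,\delta$ with $\alpha,\beta,\delta\in L$. Using $f(c)=f(c)v$ and $g(c)=vg(c)$, I would rewrite such a term as $\alpha\, f(c)\,(v\beta v)\,g(c)\,\delta$. Since $v\beta v\in vLv$, commutativity in $vLv$ gives $f(c)(v\beta v)g(c)=(v\beta v)f(c)g(c)$, so the term becomes $\alpha(v\beta v)\bigl(f(c)g(c)\bigr)\delta\in <f(c)g(c)>$. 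Summing yields the inclusion and hence the first equality.

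For the third member of the asserted chain, the very same argument with $f$ and $g$ interchanged produces $<g(c)><f(c)>=<g(c)f(c)>$, and one further invocation of commutativity of $vLv$ identifies $g(c)f(c)$ with $f(c)g(c)$, closing the chain. I do not foresee any serious obstacle beyond bookkeeping with the idempotent $v$ in the absorption step $f(c)\beta g(c)=f(c)(v\beta v)g(c)$; once $vLv\cong K[x,x^{-1}]$ is granted, the rest is one line of commutativity.
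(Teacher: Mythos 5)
Your proof is correct and follows essentially the same route as the paper's: both arguments squeeze the factor sitting between $f(c)$ and $g(c)$ into the commutative corner $vLv\cong K[x,x^{-1}]$ and commute it past $f(c)$. The only difference is one of packaging --- the paper expands that middle factor into monomials $\gamma\delta^{\ast}$ and uses the no-exit hypothesis to reduce it to a power $c^{\epsilon}$ of $c$ or $c^{\ast}$, whereas you invoke the isomorphism $vLv\cong K[x,x^{-1}]$ wholesale via the bracketing $v\beta v$, which is precisely what that reduction establishes.
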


\begin{proof}
Now a typical element of $<f(c)><g(c)>$ is a $K$-linear combination of
non-zero terms of the form $\alpha\beta^{\ast}f(c)\gamma\delta^{\ast}%
g(c)\mu\nu^{\ast}$ where $\alpha,\beta,\gamma,\delta,\mu,\nu$ are paths in
$E$. Here $s(\gamma)=v=s(\delta)$ and $r(\gamma)=r(\delta)$. Since $c$ is a
cycle without exits, $\gamma\delta^{\ast}$ simplifies to an integer power of
$c$ or $c^{\ast}$ which we denote by $c^{\epsilon}$. Then $\alpha\beta^{\ast
}f(c)\gamma\delta^{\ast}g(c)\mu\nu^{\ast}=\alpha\beta^{\ast}c^{\epsilon
}f(c)g(c)\mu\nu^{\ast}\in<f(c)g(c)>$. On the other hand, $<f(c)g(c)>\subseteq
<f(c)><g(c)>$ and so we get
\[
<f(c)><g(c)>=<f(c)g(c)>=<g(c)f(c)>.
\]
In a similar fashion, we can show that $<g(c)><f(c)>=<g(c)f(c)>$.
\end{proof}

\begin{theorem}
\label{Ideals in LPA commute}(\cite{AAS}) Let $E$ be an arbitrary graph. Then
for any two ideals $A,B$ in $L=L_{K}(E)$, we have $AB=BA$.
\end{theorem}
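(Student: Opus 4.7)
The plan is to reduce the general commutativity $AB=BA$ to a term-by-term commutativity of certain principal ideals handled by Lemma \ref{Pricipal ideals commute}, using Theorem \ref{R-2} to decompose arbitrary ideals and Lemma \ref{Product = Intersection}(i) to absorb every interaction in which a graded factor appears.

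First, if either $A$ or $B$ is graded, then Lemma \ref{Product = Intersection}(i) gives $AB=A\cap B=BA$ immediately, so assume both $A$ and $B$ are non-graded. By Theorem \ref{R-2} write $A=gr(A)+N_A$ and $B=gr(B)+N_B$, where $N_A=\sum_{i\in X}\langle f_i(c_i)\rangle$, $N_B=\sum_{j\in Y}\langle g_j(d_j)\rangle$, and each $c_i, d_j$ is a cycle without exits in its respective quotient graph. Expanding by distributivity,
\[
AB=gr(A)gr(B)+gr(A)N_B+N_A\,gr(B)+N_A N_B,
\]
and similarly $BA=gr(B)gr(A)+gr(B)N_A+N_B\,gr(A)+N_B N_A$. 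The first three cross-terms in $AB$ each contain a graded factor, so by Lemma \ref{Product = Intersection}(i) each equals the corresponding set intersection (for instance $gr(A)N_B=gr(A)\cap N_B=N_B\,gr(A)$) and therefore matches the corresponding term of $BA$. The task reduces to $N_AN_B=N_BN_A$, and by further distributivity to showing $\langle f_i(c_i)\rangle\langle g_j(d_j)\rangle=\langle g_j(d_j)\rangle\langle f_i(c_i)\rangle$ for each pair $(i,j)$.

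When $c_i=d_j$ as cycles, Lemma \ref{Pricipal ideals commute} yields the desired equality (applied directly when the common cycle is already without exits in $E$, or after passing to a quotient $L/J$ by a sufficiently large graded ideal $J$ in which it becomes a cycle without exits, then lifting back via Lemma \ref{Product = Intersection}(i) for $J$). When $c_i\neq d_j$, the two cycles without exits necessarily have disjoint vertex sets, and a typical generating monomial of $\langle f_i(c_i)\rangle\langle g_j(d_j)\rangle$ sandwiches a path/ghost-path connecting piece between the two cycle factors. The absence of exits at the base vertices $v_i=s(c_i)$ and $v_j=s(d_j)$ forces much of this monomial to live in the corners $v_iLv_i$ and $v_jLv_j$; the CK-1 dichotomy --- whenever $p^*q\neq 0$, either $p=qr$ or $q=ps$ for suitable paths --- then controls the residual connecting piece and permits rewriting the monomial as a generator of $\langle g_j(d_j)\rangle\langle f_i(c_i)\rangle$.

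The main obstacle is precisely this distinct-cycle case. Theorem \ref{Ideal generated by c^0- c cycle withour exts} shows readily that both products fall inside a common graded ideal --- in any quotient where both cycles are without exits, the images of the two principal ideals sit in different summands of a ring direct sum and so multiply to zero --- but upgrading this containment to a genuine equality in $L$ itself demands the explicit monomial manipulation via the CK-1 relation outlined above. Once term-by-term commutativity of the principal ideals is established, the previous reductions immediately deliver $AB=BA$.
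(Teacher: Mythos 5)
Your reduction to commutativity of the pairwise products $\langle f_i(c_i)\rangle\langle g_j(d_j)\rangle$ is sound as far as it goes, but the proof stops exactly where the real work begins: you flag the distinct-cycle case as ``the main obstacle'' and then leave it as an outline that, as described, does not go through. For distinct cycles the product $\langle f_i(c_i)\rangle\langle g_j(d_j)\rangle$ is in general \emph{not} zero in $L$ --- it is only zero modulo a suitable graded ideal --- so the direct-sum decomposition of Theorem \ref{Ideal generated by c^0- c cycle withour exts} gives you only a congruence, not an equality, and the promised ``explicit monomial manipulation via CK-1'' is never actually carried out. Your fallback of passing to a quotient $L/J$ in which the cycles become exit-free and then ``lifting back via Lemma \ref{Product = Intersection}(i)'' has the same defect: an equality $\bar X\bar Y=\bar Y\bar X$ in $L/J$ only yields $XY+J=YX+J$ in $L$, and a further argument is needed to strip off $J$. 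There is also an unproved assertion that two distinct exit-free cycles coming from the decompositions of \emph{two different ideals} have disjoint vertex sets; Theorem \ref{R-2} guarantees disjointness only within the decomposition of a single ideal.

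The paper's proof shows how to close this gap. Rather than seeking term-by-term equality in $L$, it passes once and for all to $\bar L=L/(gr(A)+gr(B))$, where the images $\bar A$ and $\bar B$ both sit inside $M=\bigoplus_t M_t$ and the off-diagonal products $M_rM_s$ vanish; hence $\bar A\bar B=\bar B\bar A$ follows from Lemma \ref{Pricipal ideals commute} applied only to the diagonal terms. The resulting congruence $AB\subseteq BA+gr(A)+gr(B)$ is then upgraded to $AB=BA$ by intersecting with $A\cap B$ and invoking the modular law together with Proposition \ref{Distributive Lattice}: the graded correction term $(A\cap B)\cap[gr(A)+gr(B)]=(A\cap B\cap gr(A))+(A\cap B\cap gr(B))$ is already contained in $BA$ by Lemma \ref{Product = Intersection}(i). (A separate easy case handles the situation where $A$ or $B$ is contained in $gr(A)+gr(B)$.) This absorption step is the idea your proposal is missing; without it, or without an honest completion of the monomial computation you defer, the argument is incomplete.
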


\begin{proof}
In view of Lemma \ref{Product = Intersection} (i), we may assume that both $A$
and $B$ are non-graded ideals. We distinguish two cases.

Case 1: Suppose $A$ or $B$ is contained in $gr(A)+gr(B)$, say $A\subseteq
gr(A)+gr(B)$. \ By modular law, $A=gr(A)+(A\cap gr(B))$. Then, by using Lemma
\ref{Product = Intersection}(i) and Lemma \ref{graded multiplicatin ring}, we
get
\[
AB=gr(A)B+[A\cap gr(B)]B=(gr(A)\cap B)+(A\cap gr(B)).
\]
Similarly,%
\[
BA=Bgr(A)+B[A\cap gr(B)]=(B\cap gr(A))+(A\cap gr(B).
\]
Thus $AB=BA$.

Case 2: Suppose $A\nsubseteqq gr(A)+gr(B)$ and $B\nsubseteqq gr(A)+gr(B)$. We
write $gr(A)+gr(B)=I(H,S)$ where $H=(gr(A)+gr(B))\cap E^{0}$ and $S=\{u\in
B_{H}:u^{H}\in gr(A)+gr(B)\}$. By Theorem \ref{R-2}, we can write
\[
A=gr(A)+%
{\displaystyle\sum\limits_{j\in X}}
<f_{j}(c_{j})>\ \text{and }B=gr(B)+%
{\displaystyle\sum\limits_{k\in Y}}
<g_{k}(c_{k})>
\]
where $X,Y$ are some index sets, for each $j\in X$ and $k\in Y$,
$f_{j}(x),g_{k}(x)\in K[x]$, $c_{j}$ and $c_{k}$ are cycles without exits in
$E^{0}\backslash gr(A)$ and in $E^{0}\backslash gr(B)$, respectively. In
$L/I(H,S)\cong L_{K}(E\backslash(H,S))$, let $M$ denote the ideal of generated
by the vertices in all the cycles $c_{t}$ ($t\in T$) without exits in
$E\backslash(H,S)$. By Theorem
\ref{Ideal generated by c^0- c cycle withour exts}, $M$ is a ring direct sum
$M=%
{\displaystyle\bigoplus\limits_{t\in T}}
M_{t}$ where $M_{t}$ is the ideal generated by the vertices on the cycle
$c_{t}$ in $L/I(H,S)$. Now $\bar{A}=(A+I(H,S))/I(H,S)$ (and likewise $\bar
{B}=(B+I(H,S))/I(H,S)$) is an epimorphic image of $A/gr(A)$ (of $B/gr(B)$) and
so $\bar{A},\bar{B}\subseteq M$. Consequently, we can write $\bar{A}=%
{\displaystyle\bigoplus\limits_{r\in X^{\prime}\subseteq T}}
<f_{r}(c_{r})>$ and $\bar{B}=%
{\displaystyle\bigoplus\limits_{s\in Y^{\prime}\subseteq T}}
<g_{s}(c_{s})>$ where $c_{r},c_{s}$ are cycles without exits in $E\backslash
(H,S)$. Since the product $M_{r}M_{s}=0$ for all $r\neq s$ in $T$, we have,
using Lemma \ref{Pricipal ideals commute},
\[
\bar{A}\bar{B}=%
{\displaystyle\bigoplus\limits_{k\in X^{\prime}\cap Y^{\prime}}}
<f_{k}(c_{k})><g_{k}(c_{k})>=%
{\displaystyle\bigoplus\limits_{k\in X^{\prime}\cap Y^{\prime}}}
<g_{k}(c_{k})><f_{k}(c_{k})>=\bar{B}\bar{A}.
\]
Then $AB=BA+gr(A)+gr(B)$. Now $A\cap B$ contains both $AB,BA$ and so, using
modular law,
\begin{align*}
AB  &  =(A\cap B)\cap AB=BA+(A\cap B)\cap\lbrack gr(A)+gr(B)]\\
&  =BA\text{, as }BA\text{ contains the second term.}%
\end{align*}

\end{proof}

\section{Leavitt Path Algebras as Arithmetical Rings and Multiplication Rings}

The main theorem of this section shows that the ideals of a Leavitt path
algebra $L$ form a distributive lattice. As a consequence, the Chinese
remainder theorem holds in $L$. We begin with two propositions which are
useful in the proof of the main theorem (Theorem \ref{LPA is Arithmetic}) and
which show how things work out nicely for graded ideals. Using Theorem
\ref{LPA is Arithmetic}, we then show that every Leavitt path algebra $L$ is a
multiplication ring, a useful property in the ideal theory of rings.

\begin{proposition}
\label{Distributive Lattice} Let $E$ be an arbitrary graph. If $A,B,C$ are
ideals of $L$ and if one of them is a graded ideal, then we have
\[
A\cap(B+C)=(A\cap B)+(A\cap C).
\]

\end{proposition}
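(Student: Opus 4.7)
The inclusion $(A\cap B)+(A\cap C)\subseteq A\cap(B+C)$ is immediate from distributivity in the lattice-theoretic sense (any element of the left-hand side lies in $A$ and in $B+C$), so the entire content of the proposition is the reverse inclusion. My plan is to split into cases according to which of the three ideals is graded and, in each case, exploit the existence of local units in the graded ideal (recorded in the Preliminaries: every graded ideal $I(H,S)$ is itself isomorphic to a Leavitt path algebra, hence has local units).

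\textbf{Case 1: $A$ is graded.} Given $x\in A\cap(B+C)$, write $x=b+c$ with $b\in B$, $c\in C$. Choose a local unit $u\in A$ for $x$, i.e.\ $x=ux$. Then $x=ub+uc$, and $ub\in A$ (since $u\in A$) and $ub\in B$ (since $b\in B$), so $ub\in A\cap B$; similarly $uc\in A\cap C$. This gives $x\in(A\cap B)+(A\cap C)$.

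\textbf{Case 2: $B$ (or symmetrically $C$) is graded.} Given $x\in A\cap(B+C)$, write $x=b+c$ with $b\in B$, $c\in C$, and choose a local unit $u\in B$ for $b$, so $ub=b$. Now $ux\in A$ since $x\in A$ and $A$ is a two-sided ideal, and $ux\in B$ since $u\in B$; hence $ux\in A\cap B$. For the complementary piece, $x-ux\in A$ as the difference of two elements of $A$, and a direct computation gives
\[
x-ux=(b+c)-(ub+uc)=(b-ub)+(c-uc)=c-uc\in C,
\]
so $x-ux\in A\cap C$. Thus $x=ux+(x-ux)\in(A\cap B)+(A\cap C)$.

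This exhausts all possibilities, since the hypothesis is that at least one of $A,B,C$ is graded. The only subtlety in the argument is that in Case~2 we cannot simply imitate Case~1 by multiplying on the left by a local unit for $x$ (no such unit need exist in $A$ or in $B+C$); instead we use the local unit $u\in B$ that witnesses the summand $b$, and the key cancellation $b-ub=0$ forces the leftover $x-ux$ into $C$ while automatically staying in $A$. I do not expect any real obstacle here — the proof is a short lemma whose entire force comes from the presence of local units in graded ideals, which we already have in hand.
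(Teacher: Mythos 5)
Your proof is correct and rests on the same idea as the paper's: exploit local units in whichever of the three ideals is graded. If anything your execution is slightly more streamlined — in Case 1 you replace the paper's appeal to Lemma \ref{Product = Intersection} (writing $A\cap(B+C)=A(B+C)=AB+AC$) with the direct local-unit computation that underlies that lemma, and in Case 2 your splitting $x=ux+(x-ux)$ with $x-ux=c-uc\in A\cap C$ reaches the conclusion in one step, where the paper instead writes $b=uau-ucu\in(A\cap B)+(B\cap C)$ and then invokes the modular law twice.
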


\begin{proof}
Suppose $A$ is a graded ideal. By Lemma \ref{Product = Intersection}, we then
obtain
\[
A\cap(B+C)=A(B+C)=AB+AC=(A\cap B)+(A\cap C).\qquad\qquad
\]
Next, suppose one of $B$ and $C$, say $B$, is a graded ideal. We need only to
show that $A\cap(B+C)\subseteq(A\cap B)+(A\cap C)$. Let $a=b+c\in A\cap(B+C)$.
Since $B$ is graded, $b=ubu$ for some local unit $u\in B$. So
$b=ubu=uau-ucu\in(A\cap B)+(B\cap C)$. Then $c=a-b\in C\cap\lbrack A+(A\cap
B)+(B\cap C)]=C\cap\lbrack A+(B\cap C)]=(C\cap A)+(B\cap C)$, by modular law.
It then follows that
\begin{align*}
a  &  =b+c\in A\cap\lbrack(A\cap B)+(B\cap C)+(C\cap A)+(B\cap C)]\\
&  =(A\cap B)+(A\cap C)+(A\cap B\cap C)\\
&  =(A\cap B)+(A\cap C).
\end{align*}
Similar argument holds when $C$ is a graded ideal.
\end{proof}

\begin{proposition}
\label{Mod graded B+C} Let $A,B,C$ be non-graded ideals of $L$. If one of
$A,B,C$ is contained in $gr(B)+gr(C)$, then $A\cap(B+C)=(A\cap B)+(A\cap C)$.
\end{proposition}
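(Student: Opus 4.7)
The plan is to split into three subcases according to which of $A$, $B$, $C$ lies in $gr(B)+gr(C)$, and in each case reduce to Proposition \ref{Distributive Lattice} by exploiting the fact that both $gr(B)$ and $gr(C)$ are graded ideals of $L$.

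Suppose first that $A\subseteq gr(B)+gr(C)$. Since $gr(B)\subseteq B$ and $gr(C)\subseteq C$, we have $A\subseteq B+C$, so $A\cap(B+C)=A$. Applying Proposition \ref{Distributive Lattice} to the triple $A,gr(B),gr(C)$ (two of which are graded) yields
\[
A=A\cap(gr(B)+gr(C))=(A\cap gr(B))+(A\cap gr(C))\subseteq(A\cap B)+(A\cap C),
\]
and the reverse containment is immediate. Suppose next that $B\subseteq gr(B)+gr(C)$. The key reduction is the identity $B+C=gr(B)+C$: by the modular law applied to $gr(B)\subseteq B$, one has $B=B\cap(gr(B)+gr(C))=gr(B)+(B\cap gr(C))$, and since $B\cap gr(C)\subseteq C$ the second summand is absorbed when $C$ is added. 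Now Proposition \ref{Distributive Lattice} applied to the triple $A,gr(B),C$, with the graded ideal $gr(B)$, gives
\[
A\cap(B+C)=A\cap(gr(B)+C)=(A\cap gr(B))+(A\cap C)\subseteq(A\cap B)+(A\cap C),
\]
and again the reverse inclusion is trivial. The remaining case $C\subseteq gr(B)+gr(C)$ is completely symmetric, with $gr(C)$ playing the role of the graded ideal and $B+C=B+gr(C)$.

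The only nontrivial step, and the main point to watch, is spotting the modular-law identity $B=gr(B)+(B\cap gr(C))$ that collapses $B+C$ to the hybrid expression $gr(B)+C$ in which one summand is graded; once this observation is made, Proposition \ref{Distributive Lattice} does all the substantive work and no further calculation is required.
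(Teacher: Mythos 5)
Your proof is correct and follows essentially the same route as the paper's: the first case intersects $A$ with $gr(B)+gr(C)$ and invokes Proposition \ref{Distributive Lattice}, and the second uses exactly the paper's modular-law identity $B=gr(B)+(B\cap gr(C))$ to replace $B+C$ by $gr(B)+C$ before applying that proposition. No gaps.
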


\begin{proof}
Suppose $A\subseteq gr(B)+gr(C)$. Then
\begin{align*}
A\cap(B+C)  &  =A\cap\lbrack gr(B)+gr(C)]\cap(B+C)\\
&  =A\cap\lbrack gr(B)+gr(C)]\\
&  =(A\cap gr(B))+(A\cap gr(C))\text{, by Proposition
\ref{Distributive Lattice}}\\
&  \subseteq(A\cap B)+(A\cap C).
\end{align*}
Suppose one of $B$ and $C$, say, $B\subseteq gr(B)+gr(C)$. Then
$B=gr(B)+(B\cap gr(C))$, by modular law. Consequently,
\begin{align*}
A\cap(B+C)  &  =A\cap([gr(B)+(B\cap gr(C))]+C)\\
&  =A\cap(gr(B)+C)\\
&  =(A\cap gr(B))+(A\cap C)\text{, by Proposition \ref{Distributive Lattice}%
}\\
&  \subseteq(A\cap B)+(A\cap C).
\end{align*}
Similar argument works when $C\subseteq gr(B)+gr(C)$.
\end{proof}

We are now ready to show that every Leavitt path algebra is an arithmetical ring.

\begin{theorem}
\label{LPA is Arithmetic} The ideals of any Leavitt path algebra form a
distributive lattice. Specifically, if $A,B,C$ are any three ideals of a
Leavitt path algebra $L$ of an arbitrary graph $E$, then
\[
A\cap(B+C)=(A\cap B)+(A\cap C).
\]

\end{theorem}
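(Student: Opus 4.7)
The trivial containment $(A\cap B)+(A\cap C)\subseteq A\cap(B+C)$ follows from $A\cap B,A\cap C\subseteq A\cap(B+C)$. For the reverse, Proposition \ref{Distributive Lattice} lets me assume that none of $A,B,C$ is graded. Set $G=gr(A)+gr(B)+gr(C)$, a graded ideal, and write $\bar L=L/G\cong L_K(E\setminus(H,S))$.

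My first step is to place the images $\bar A,\bar B,\bar C$ inside the ideal $M$ of Theorem \ref{Ideal generated by c^0- c cycle withour exts}. By Theorem \ref{R-2}, $A=gr(A)+\sum_i\langle f_i(c_i)\rangle$ with each $c_i$ a cycle without exits in $E\setminus gr(A)$; any such $c_i$ whose vertex set is not absorbed into the hereditary set for $G$ remains a cycle without exits in $E\setminus(H,S)$, so $\bar A\subseteq M=\bigoplus_t M_t$ with $M_t\cong M_{\Lambda_t}(K[x,x^{-1}])$, and similarly for $\bar B,\bar C$. Because $M_sM_t=0$ for $s\neq t$ and each $M_t$ has local units, any ideal of $\bar L$ contained in $M$ splits as a direct sum along the $M_t$'s, so distributivity for $\bar A,\bar B,\bar C$ reduces to distributivity inside each $M_t$. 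Since $M_t$ is Morita equivalent to the principal ideal domain $K[x,x^{-1}]$, its ideal lattice is distributive, giving
\[
\bar A\cap(\bar B+\bar C)=(\bar A\cap\bar B)+(\bar A\cap\bar C).
\]

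Next comes the lift. Along each index $t$ the principal ideals $\bar A_t,\bar B_t$ correspond to $(f_{A,t}),(g_{B,t})\subseteq K[x,x^{-1}]$, so $\bar A_t\cap\bar B_t$ is generated in $M_t$ by $\ell_t(c_t)$ with $\ell_t=\operatorname{lcm}(f_{A,t},g_{B,t})$. Because $\ell_t$ is a $K[x,x^{-1}]$-multiple of both $f_{A,t}$ and $g_{B,t}$, the element $\ell_t(c_t)\in L$ actually lies in $\langle f_{A,t}(c_t)\rangle_L\cap\langle g_{B,t}(c_t)\rangle_L\subseteq A\cap B$, whence $\bar A\cap\bar B\subseteq\overline{A\cap B}$ and analogously $\bar A\cap\bar C\subseteq\overline{A\cap C}$. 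Consequently every $a\in A\cap(B+C)$ satisfies $\bar a\in\overline{A\cap B}+\overline{A\cap C}$, so
\[
A\cap(B+C)\subseteq(A\cap B)+(A\cap C)+G.
\]

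To finish, I peel off the $G$-term by modular law: since $(A\cap B)+(A\cap C)\subseteq A\cap(B+C)$,
\[
A\cap(B+C)=(A\cap B)+(A\cap C)+\bigl[A\cap(B+C)\cap G\bigr].
\]
Two applications of Proposition \ref{Distributive Lattice} (to split $G=gr(A)+(gr(B)+gr(C))$ and then $gr(B)+gr(C)$) expand $A\cap(B+C)\cap G$ as $[gr(A)\cap(B+C)]+[A\cap gr(B)]+[A\cap gr(C)]$; a third application distributes $gr(A)\cap(B+C)=(gr(A)\cap B)+(gr(A)\cap C)$. All resulting summands manifestly sit inside $(A\cap B)+(A\cap C)$, so the residual $G$-contribution vanishes. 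The main obstacle is the lifting step: one needs the explicit PID structure on each $M_t$ together with the fact that the lcm generator $\ell_t(c_t)$ lifts to an element of $A\cap B$ itself rather than merely to some element of $(A+G)\cap(B+G)$, since it is this stronger property that converts the quotient distributivity into the desired identity in $L$.
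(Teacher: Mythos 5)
Your proof is correct and follows essentially the same route as the paper: reduce via Proposition \ref{Distributive Lattice} to non-graded $A,B,C$, pass to the quotient by a graded ideal so that the images land in the ring direct sum $M=\bigoplus_{t}M_{\Lambda_{t}}(K[x,x^{-1}])$ whose ideal lattice is distributive by Morita equivalence with a PID, and then recover the identity in $L$ by the modular law together with Proposition \ref{Distributive Lattice}. The only differences are cosmetic: you quotient by $gr(A)+gr(B)+gr(C)$ rather than $gr(B)+gr(C)$ (so that $\bar{A}\subseteq M$ outright, where the paper instead works with $A'=\bar{A}\cap M$), and you make explicit, via the lcm generator, the lifting $\bar{A}\cap\bar{B}\subseteq\overline{A\cap B}$ that the paper leaves implicit when passing to its inclusion $(\ast)$.
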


\begin{proof}
In view of Propositions \ref{Distributive Lattice} and \ref{Mod graded B+C},
we may assume that $A,B,C$ are all non-graded ideals such that none of $A,B$
or $C$ is contained in $gr(B)+gr(C)$. By Theorem \ref{R-2},
\[
B=I(H_{1},S_{1})+%
{\displaystyle\sum\limits_{i\in X}}
<f_{i}(c_{i})>\text{and }C=I(H_{2},S_{2})+%
{\displaystyle\sum\limits_{j\in Y}}
<g_{j}(c_{j})>
\]
where $X,Y$ are some index sets, $I(H_{1},S_{1})=gr(B)$, $I(H_{2}%
,S_{2})=gr(C)$, for all $i\in X$ and $j\in Y$, $f_{i}(x),g_{j}(x)\in K[x]$ and
the $c_{i},c_{j}$ are cycles without exits in $E\backslash(H_{1},S_{1})$ and
$E\backslash(H_{2},S_{2})$ respectively. Let $I(H,S)$ denote the graded ideal
$gr(B)+gr(C)$. Consider $L/I(H,S)$ which we identify with $L_{K}%
(E\backslash(H,S))$. Let $\bar{A},\bar{B},\bar{C}$ denote the images\ in
$L/I(H,S)$ of $A,B,C$ respectively. Let $M$ be the ideal of $L/I(H,S)$
generated by the vertices in all the cycles without exits in $E\backslash
(H,S)$. Since $\bar{B}=[B+I(H,S)]/I(H,S)$ (and likewise, $\bar{C}%
=[C+I(H,S)]/I(H,S)$) is a non-zero homomorphic image of $B/I(H_{1},S_{1})$
(respectively, of $C/I(H_{2},S_{2})$), $\bar{B},\bar{C}\subseteq M$.

If $\bar{A}\cap(\bar{B}+\bar{C})=0$, then $A\cap(B+C)\subseteq gr(B)+gr(C)$
and we get,%
\begin{align*}
A\cap(B+C)  &  =A\cap\lbrack A\cap(B+C)]\subseteq A\cap\lbrack gr(B)+gr(C)]\\
&  =(A\cap gr(B))+(A\cap gr(C))\text{, by Proposition
\ref{Distributive Lattice}}\\
&  \subseteq(A\cap B)+(A\cap C)\text{.}%
\end{align*}

Suppose $\bar{A}\cap(\bar{B}+\bar{C})\neq0$, so $A^{\prime}=\bar{A}\cap
M\neq0$. Now, by Theorem \ref{Ideal generated by c^0- c cycle withour exts},
$M$ is a ring direct sum of matrix rings of the form $M_{\Lambda}%
(K[x,x^{-1}])$ whose ideal lattice is distributive (as $M_{\Lambda}%
(K[x,x^{-1}])$ is Morita equivalent to the principal ideal domain
$K[x,x^{-1}]$). Hence the ideal lattice of $M$ is also distributive and we
obtain
\[
\bar{A}\cap(\bar{B}+\bar{C})=A^{\prime}\cap(\bar{B}+\bar{C})=(A^{\prime}%
\cap\bar{B})+(A^{\prime}\cap\bar{C})=(\bar{A}\cap\bar{B})+(\bar{A}\cap\bar
{C})\text{.}%
\]
Then
\[
A\cap(B+C)\subseteq(A\cap B)+(A\cap C)+gr(B)+gr(C).\qquad\qquad(\ast)
\]
Intersecting with $A$ on both sides of $(\ast)$ and using modular law and
Proposition \ref{Distributive Lattice}, we obtain%
\begin{align*}
A\cap(B+C)  &  \subseteq(A\cap B)+(A\cap C)+A\cap\lbrack gr(B)+gr(C)]\\
&  =(A\cap B)+(A\cap C)+(A\cap(gr(B))+(A\cap gr(C))\\
&  =(A\cap B)+(A\cap C)\text{.}%
\end{align*}
This proves that the ideal lattice of $L$ is distributive.
\end{proof}

\textbf{Remark}: As an interesting consequence of Theorem
\ref{LPA is Arithmetic}, one can show, using well-known arguments (see e.g.
Theorem 18, ch IV, \cite{ZS}) that the Chinese remainder theorem holds in
Leavitt path algebras.

We next use Theorem \ref{LPA is Arithmetic}, to show that every Leavitt path
algebra is a multiplication ring, a useful property in the multiplicative
ideal theory of Leavitt path algebras.

\begin{theorem}
\label{LPAs are Multiplication rings}The Leavitt path algebra $L=L_{K}(E)$ of
an arbitrary graph $E$ is a multiplication ring, that is, for any two ideals
$A,B$ of $L$ with $A\subseteq B$, there is an ideal $C$ of $L$, such that
$A=BC=CB$.
\end{theorem}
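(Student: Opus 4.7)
My plan is to use the commutativity of ideal multiplication (Theorem \ref{Ideals in LPA commute}) to reduce the problem to finding $C$ with $A=BC$, then split into cases based on graded structure. First, the easy cases: if $A$ or $B$ is graded, or more generally if $A\subseteq gr(B)$, take $C=A$. Lemma \ref{graded multiplicatin ring} gives $BA=A$ directly when $B$ is graded or when $A\subseteq gr(B)$; and if $A$ itself is graded then $A\subseteq gr(B)$ automatically, since $gr(B)$ is the largest graded ideal in $B$. When both are non-graded but $A\subseteq gr(B)$, Lemma \ref{Product = Intersection} yields $gr(B)\cdot A=gr(B)\cap A=A$, so $A=gr(B)\cdot A\subseteq BA\subseteq A$, giving $BA=A$.

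In the remaining case both $A$ and $B$ are non-graded with $A\not\subseteq gr(B)$. Write $A=gr(A)+\sum_{i}\langle f_i(c_i)\rangle$ and $B=gr(B)+\sum_{j}\langle g_j(c_j)\rangle$ using Theorem \ref{R-2}, and pass to $\bar L=L/gr(B)\cong L_K(E\setminus(H_2,S_2))$ where $gr(B)=I(H_2,S_2)$. The nonzero images $\bar A,\bar B$ both sit inside the ideal $M$ of $\bar L$ generated by vertices on cycles without exits of the quotient graph, and by Theorem \ref{Ideal generated by c^0- c cycle withour exts}, $M=\bigoplus_{t}M_t$ with each $M_t\cong M_{\Lambda_t}(K[x,x^{-1}])$ Morita equivalent to a PID. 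Decomposing $\bar A=\bigoplus_t\bar A_t$ and $\bar B=\bigoplus_t\bar B_t$, for each $t$ with $\bar A_t\neq 0$ the inclusion $\bar A_t\subseteq\bar B_t$ corresponds to a divisibility $(f_t)\subseteq(g_t)$ in $K[x,x^{-1}]$; writing $f_t=g_t h_t$ and taking $\bar C_t\leftrightarrow(h_t)$ yields $\bar B\bar C=\bar A$ for $\bar C=\bigoplus_t\bar C_t$. Lifting each $h_t$ to $h_t(c_t)\in L$ along the corresponding cycle in $E$, I would set
\[
C=gr(A)+\sum_{t}\langle h_t(c_t)\rangle.
\]

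The inclusion $A\subseteq BC$ is the easy direction: Lemma \ref{graded multiplicatin ring} gives $B\cdot gr(A)=gr(A)$, and Lemma \ref{Pricipal ideals commute} gives $\langle g_t(c_t)\rangle\langle h_t(c_t)\rangle=\langle g_t(c_t)h_t(c_t)\rangle=\langle f_t(c_t)\rangle$, so every summand of $A$ lies in $BC$. The main obstacle I expect is the reverse containment $BC\subseteq A$. Expanding $BC$, using Lemma \ref{Product = Intersection} to turn graded factors into intersections and using the direct-sum decomposition of $M$ to force cross-terms $\langle g_s(c_s)\rangle\langle h_t(c_t)\rangle$ with $s\neq t$ into $gr(B)$, and then invoking distributivity (Theorem \ref{LPA is Arithmetic}) in the spirit of the proof of Theorem \ref{Ideals in LPA commute}, one reduces the task to showing $gr(B)\cap\langle h_t(c_t)\rangle\subseteq A$ for every $t$. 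The delicate point is precisely this last containment: since $v_t\notin H_2$ and $H_2$ is hereditary, no vertex with a path to $v_t$ can lie in $H_2$, so any element of $\langle h_t(c_t)\rangle\cap gr(B)$ must arise from products involving ghost-edge elements of $gr(B)$ multiplied into $h_t(c_t)$; unpacking these via the CK-relations so that the result factors through $\langle f_t(c_t)\rangle\subseteq A$ or lands inside $gr(A)\subseteq A$ is the combinatorial step where the bulk of the technical work will reside, and completing it yields $BC=A$ as desired.
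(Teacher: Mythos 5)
Your overall strategy is the same as the paper's: dispose of the graded cases via Lemma \ref{graded multiplicatin ring}, then in the non-graded case use Theorem \ref{R-2} to write $A$ and $B$ in terms of cycles without exits, extract the divisibility $f=gh$ from the inclusion of the corresponding principal ideals, and build $C$ from $gr(A)$ together with the quotient polynomials $h_t(c_t)$, with the real work being the claim that $gr(B)$ annihilates (mod $gr(A)$) the ideals $<h_t(c_t)>$. However, your candidate $C$ has a genuine gap. By working in $L/gr(B)$ you lose exactly the components of $A$ that sit inside $gr(B)$ but not inside $gr(A)$: in the paper's notation these are the $<f_i(c_i)>$ with $i\in T=\{i\in X: v_i\in gr(B)\}$, and they vanish in $L/gr(B)$, so your sum $\sum_t<h_t(c_t)>$ ranges only over $X\setminus T$. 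Consequently your $C=gr(A)+\sum_t<h_t(c_t)>$ yields at best $BC=gr(A)+\sum_{t\in X\setminus T}<f_t(c_t)>$, which is a proper subideal of $A$ whenever $T\neq\emptyset$. A concrete failure: let $E$ consist of two isolated loops $c_1$ at $v_1$ and $c_2$ at $v_2$, let $A=<f_1(c_1)>+<f_2(c_2)>$ and $B=<v_1>+<g_2(c_2)>$ with $g_2\mid f_2$ and $f_1,f_2,g_2$ nonconstant. Then $A\subseteq B$, $A\nsubseteq gr(B)=<v_1>$, and your $C=<h_2(c_2)>$ gives $BC=<f_2(c_2)>\subsetneqq A$, since $<v_1><h_2(c_2)>=0$. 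The paper avoids this by working in $L/gr(A)$ (where nothing is lost) and by explicitly including $\sum_{i\in T}<f_i(c_i)>$ in $C$; for those summands $B\cdot<f_i(c_i)>=<f_i(c_i)>$ follows from Lemma \ref{graded multiplicatin ring} because $<f_i(c_i)>\subseteq gr(B)$.

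A second, smaller point: you defer the key technical claim (that $gr(B)\cap<h_t(c_t)>$ lands in $gr(A)$, hence contributes nothing) to an unspecified combinatorial argument. Your sketch of why it should hold --- paths from $H_2$ into $c_t^0$ would violate heredity or the no-exit condition --- is the right idea and matches the paper's proof of its claim $(\ast)$, but the paper also has to handle the breaking-vertex generators $\gamma u^H\delta^*$ of $gr(\bar B)$ separately (using the defining relation of $u^H$ to get cancellation), which your sketch does not mention. So even after repairing $C$, that verification still needs to be written out.
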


\begin{proof}
If $A=B$, then, as $L$ is a ring with local units, $A=B=BL$. So assume that
$A\subsetneqq B$. If $A$ or $B$ is a graded ideal or if $A\subseteq gr(B)$,
then, as shown in Lemma \ref{graded multiplicatin ring}, $C=A$ satisfies
$A=BC=CB$. So\ we may assume that both $A$ and $B$ are non-graded ideals with
$A\subsetneqq B$ but $A\nsubseteq gr(B)$. By Theorem \ref{R-2}, $A=I(H_{1}%
,S_{1})+%
{\displaystyle\sum\limits_{i\in X}}
<f_{i}(c_{i})>$ and $B=I(H_{2},S_{2})+%
{\displaystyle\sum\limits_{j\in Y}}
<g_{j}(c_{j})>$ where $I(H_{1},S_{1})=gr(A)\subseteq I(H_{2},S_{2})=gr(B)$,
the $c_{i}$ and $c_{j}$ are cycles without exits in $E\backslash H_{1}$ and in
$E\backslash H_{2}$ respectively, based at vertices $v_{i},v_{j}$ and
$f_{i}(x),g_{j}(x)\in K[x]$. Let $T=\{i\in X:v_{i}\in gr(B)\}$ so that $%
{\displaystyle\sum\limits_{i\in T}}
<f_{i}(c_{i})>\subseteq gr(B)$. Moding out $gr(A)$ and denoting $\bar
{A}=A/gr(A)$ and $\bar{B}=B/gr(A)$, we have in $\bar{L}=L/gr(A)\cong
L_{K}(E\backslash(H_{1},S_{1})$,
\[
\bar{A}=%
{\displaystyle\sum\limits_{i\in X}}
<f_{i}(c_{i})>\subseteq\bar{B}=[gr(B)/gr(A)]+%
{\displaystyle\sum\limits_{j\in Y}}
<g_{j}(c_{j})>.
\]
Now $B/gr(B)=%
{\displaystyle\bigoplus\limits_{j\in Y}}
<g_{j}(c_{j})>$ is a ring direct sum of ideals and the ideal
\[
\lbrack A+gr(B)]/gr(B)=%
{\displaystyle\bigoplus\limits_{j\in X\backslash T}}
<f_{j}(c_{j})>\subseteq B/gr(B).
\]
It is then easy to see that, for each $j\in X\backslash T$, $<f_{j}%
(c_{j})>\subseteq<g_{j}(c_{j})>$. From Theorem \ref{R-2}(a), $g_{j}(x)$ is a
polynomial of the smallest degree in $K[x]$ such that $g_{j}(c_{j})\in B$.
Consequently, $g_{j}(x)$ must be a divisor of $f_{j}(x)$ in $K[x]$. Hence, for
each $j\in X\backslash T$, $f_{j}(c_{j})=g_{j}(c_{j})f_{j}^{\prime}(c_{j})$
where $f_{j}^{\prime}(x)\in K[x]$ satisfies $f_{j}(x)=g_{j}(x)f_{j}^{\prime
}(x)$.

Our goal is to show that $A=BC$, where
\[
C=gr(A)+%
{\displaystyle\sum\limits_{i\in T}}
<f_{i}(c_{i})>+%
{\displaystyle\sum\limits_{j\in X\backslash T}}
<f_{j}^{\prime}(c_{j})>\text{.}%
\]
We first prove the following claim where $gr(\bar{B})=$ $gr(B)/gr(A)$.

\textbf{Claim}: In $\bar{L}=L/gr(A)$,
\[
gr(\bar{B})\cap\lbrack%
{\displaystyle\sum\limits_{j\in X\backslash T}}
<h_{j}(c_{j})>]=0\qquad\qquad\qquad(\ast)
\]
where, for each $j\in X\backslash T$, $h_{j}(x)$ is any arbitrarily chosen
polynomial belonging to $K[x]$.

\textbf{Proof of the claim:} We identify $\bar{L}$ with $L_{K}(E\backslash
(H_{1},S_{1}))$. By Theorem \ref{LPA is Arithmetic}, distributive law holds
and so we have
\[
gr(\bar{B})\cap\lbrack%
{\displaystyle\sum\limits_{j\in X\backslash T}}
<h_{j}(c_{j})>]=%
{\displaystyle\sum\limits_{j\in X\backslash T}}
\left[  gr(\bar{B})\cap<h_{j}(c_{j})>\right]  .
\]
Now, by Lemma \ref{Product = Intersection}, $gr(\bar{B})\cap<h_{\lambda
}(c_{\lambda})>=gr(\bar{B})\cdot<h_{\lambda}(c_{\lambda})>$ where $\cdot$
denotes the product. Hence it is enough to show that $gr(\bar{B}%
)\cdot<h_{\lambda}(c_{\lambda})>=0$, for any given $\lambda\in X\backslash T$.
For convenience in writing, denote the graded ideal $gr(\bar{B})$ by $I(H,S)$
where $H=H_{2}\backslash H_{1}\subseteq E\backslash(H_{1},S_{1})$. By Lemma
5.6 in \cite{T}, the elements of the graded ideal $gr(\bar{B})$ are of the
form $%
{\displaystyle\sum\limits_{j=1}^{m}}
l_{j}\alpha_{j}\beta_{j}^{\ast}+%
{\displaystyle\sum\limits_{i=1}^{n}}
t_{i}\gamma_{i}v_{i}^{H}\delta_{i}^{\ast}$ where $l_{j},t_{i}\in K$,
$r(\alpha_{j})=r(\beta_{j})\in H$ and $r(\gamma_{i})=r(\delta_{i})=u_{i}\in S$.

Let $x\in I(H,S)\cdot<h_{\lambda}(c_{\lambda})>$. \ Then $x$ is a finite sum
of products of the form
\[
\left[
{\displaystyle\sum\limits_{j=1}^{m}}
l_{j}\alpha_{j}\beta_{j}^{\ast}+%
{\displaystyle\sum\limits_{i=1}^{n}}
t_{i}\gamma_{i}u_{i}^{H}\delta_{i}^{\ast}\right]  \cdot\left[
{\displaystyle\sum\limits_{k=1}^{n^{\prime}}}
l_{k}^{\prime}p_{k}q_{k}^{\ast}h_{\lambda}(c_{\lambda})r_{k}s_{k}^{\ast
}\right]  ,
\]
where $l_{k}^{\prime}\in K$ and $p_{k},q_{k},r_{k},s_{k}$ are all paths in
$E$. A typical term in this product is of the form
\[
l_{j}l_{k}^{\prime}\alpha_{j}\beta_{j}^{\ast}p_{k}q_{k}^{\ast}h_{\lambda
}(c_{\lambda})r_{k}s_{k}^{\ast}+t_{i}l_{k}^{\prime}\gamma_{i}u_{i}^{H}%
\delta_{i}^{\ast}p_{k}q_{k}^{\ast}h_{\lambda}(c_{\lambda})r_{k}s_{k}^{\ast}.
\]
Now $s(q_{k})=r(q_{k}^{\ast})\in c_{\lambda}^{0}$ and since $c_{\lambda}$ has
no exits, $r(p_{k})=r(q_{k})\in c_{\lambda}^{0}$.

If $\beta_{j}^{\ast}p_{k}\neq0$, then either $\beta_{j}=p_{k}\beta^{\prime}$
or $p_{k}=\beta_{j}p^{\prime}$. If $\beta_{j}=p_{k}\beta^{\prime}$, then
$\beta^{\prime}$ will be path from a vertex in $c_{\lambda}$ to the vertex
$r(\beta_{j})\in H$ contradicting the fact that $c_{\lambda}$ has no exits.
Similarly, if $p_{k}=\beta_{j}p^{\prime}$, $p^{\prime}$ will be a path from
$r(\beta_{j})\in H$ to $r(p^{\prime})\in c_{\lambda}^{0}$ and this implies, as
$H$ is hereditary, $c_{\lambda}^{0}\subseteq H$, again a contradiction. So
$l_{j}l_{k}^{\prime}\alpha_{j}\beta_{j}^{\ast}p_{k}q_{k}^{\ast}h_{\lambda
}(c_{\lambda})r_{k}s_{k}^{\ast}=0$.

Likewise, if $\delta_{i}^{\ast}p_{k}\neq0$, then either $\delta_{i}%
=p_{k}\delta^{\prime}$ or $p_{k}=\delta_{i}p^{\prime}$. If $\delta_{i}%
=p_{k}\delta^{\prime}$, $\delta^{\prime}$ will be a path from $r(p_{k})\in
c_{\lambda}^{0}$ to the vertex $r(\delta_{i})\in S$, contradicting that
$c_{\lambda}$ has no exits. If $p_{k}=\delta_{i}p^{\prime}$, then $p^{\prime}$
will be a path from $r(\delta_{i})=u_{i}$ to $r(p_{k})\in c_{\lambda}^{0}$.
Observe that in this case, $p^{\prime}=fp^{\prime\prime}$ where $f$ is the
initial edge of $p^{\prime}$ satisfying $r(f)\notin H$. Then%
\[
u_{i}^{H}\delta_{i}^{\ast}p_{k}=u_{i}^{H}p^{\prime}=(v_{i}-%
{\displaystyle\sum\limits_{s(e)=v_{i},r(e)\notin H}}
ee^{\ast})fp^{\prime\prime}=fp^{\prime\prime}-fp^{\prime\prime}=0\text{.}%
\]
So $t_{i}l_{k}^{\prime}\gamma_{i}v_{i}^{H}\delta_{i}^{\ast}p_{k}q_{k}^{\ast
}h_{\lambda}(c_{\lambda})r_{k}s_{k}^{\ast}=0$. Hence $I(H,S)\cdot<h_{\lambda
}(c_{\lambda})>=0$, thus proving our claim.

Let $C$ be the ideal in $L$ such that
\[
C/gr(A)=\bar{C}=%
{\displaystyle\sum\limits_{i\in T}}
<f_{i}(c_{i})>+%
{\displaystyle\sum\limits_{j\in X\backslash T}}
<f_{j}^{\prime}(c_{j})>\subseteq\bar{L}.
\]
Then, using the claim $(\ast)$ in $\bar{L}$, Lemma
\ref{graded multiplicatin ring} and the fact that the index set
$Y=(Y\backslash X)\cup(X\backslash T)$, we have
\begin{align*}
&  \bar{B}\bar{C}\\
&  =\left[  gr(\bar{B})+%
{\displaystyle\sum\limits_{j\in Y\backslash X}}
<g_{j}(c_{j})>+%
{\displaystyle\sum\limits_{j\in X\backslash T}}
<g_{j}(c_{j})>\right]  \times\\
&  \left[
{\displaystyle\sum\limits_{i\in T}}
<f_{i}(c_{i})>+%
{\displaystyle\sum\limits_{j\in X\backslash T}}
<f_{j}^{\prime}(c_{j})>\right] \\
&  =gr(\bar{B})[%
{\displaystyle\sum\limits_{i\in T}}
<f_{i}(c_{i})>]+[%
{\displaystyle\sum\limits_{j\in X\backslash T}}
<g_{j}(c_{j})>][%
{\displaystyle\sum\limits_{i\in T}}
<f_{i}(c_{i})>]\\
+[%
{\displaystyle\sum\limits_{j\in X\backslash T}}
&  <g_{j}(c_{j})>][%
{\displaystyle\sum\limits_{j\in X\backslash T}}
<f_{j}^{\prime}(c_{j})>]\text{, by claim (*) }\\
&  =%
{\displaystyle\sum\limits_{i\in T}}
<f_{i}(c_{i})>+[%
{\displaystyle\sum\limits_{j\in X\backslash T}}
<g_{j}(c_{j})>][%
{\displaystyle\sum\limits_{i\in T}}
<f_{i}(c_{i})>]+%
{\displaystyle\sum\limits_{j\in X\backslash T}}
<f_{j}(c_{j})>\\
&  =%
{\displaystyle\sum\limits_{i\in X}}
<f_{i}(c_{i})>+0=\bar{A}\text{, }%
\end{align*}
as $[%
{\displaystyle\sum\limits_{j\in X\backslash T}}
<g_{j}(c_{j})>][%
{\displaystyle\sum\limits_{i\in T}}
<f_{i}(c_{i})>]\subseteq\lbrack%
{\displaystyle\sum\limits_{j\in X\backslash T}}
<g_{j}(c_{j})>]gr(\bar{B})=0$, by our claim $(\ast)$. It then follows that
$A=BC$.
\end{proof}

From Theorem \ref{LPAs are Multiplication rings}, we obtain the following
interesting property of prime ideals in $L$.

\begin{corollary}
\label{Prime property 1} If $P$ is a prime ideal of $L$, then for any ideal
$A$ with $P\subsetneqq A$, we have $P=AP$.
\end{corollary}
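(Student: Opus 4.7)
The plan is to deduce this directly from the multiplication-ring property (Theorem \ref{LPAs are Multiplication rings}) combined with the definition of a prime ideal. Since $P \subseteq A$, Theorem \ref{LPAs are Multiplication rings} furnishes an ideal $C$ of $L$ with $P = AC = CA$. The strategy is to argue that $C$ must lie inside $P$, and then to squeeze $AP$ between $AC$ and $P$.

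More concretely, first I would observe that $AC = P$ in particular gives $AC \subseteq P$. Because $P$ is a prime ideal, this forces $A \subseteq P$ or $C \subseteq P$. The hypothesis $P \subsetneqq A$ rules out the first alternative, so $C \subseteq P$. From $C \subseteq P$ I then get the chain
\[
P \;=\; AC \;\subseteq\; AP \;\subseteq\; P,
\]
where the last inclusion uses only that $P$ is a two-sided ideal. Hence $AP = P$, which is the claim.

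There is no real obstacle here: the work has already been done in establishing that $L$ is a multiplication ring, and the corollary is the standard consequence for prime ideals in such a ring. The only thing to be a little careful about is the convention for primeness in a noncommutative setting, namely that $IJ \subseteq P$ (for two-sided ideals $I,J$) implies $I \subseteq P$ or $J \subseteq P$; this is the definition being used throughout the paper, and the argument goes through verbatim. (By Theorem \ref{Ideals in LPA commute} one automatically has $PA = AP$ as well, so the conclusion is symmetric.)
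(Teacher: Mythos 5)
Your proposal is correct and follows exactly the paper's own argument: invoke the multiplication-ring property to write $P=AC$, use primeness and $A\nsubseteq P$ to conclude $C\subseteq P$, and then squeeze $P=AC\subseteq AP\subseteq P$. No differences worth noting.
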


\begin{proof}
Since $L$ is a multiplication ring, there is an ideal $C$ of $L$ such that
$P=AC$. Since $P$ is a prime ideal and $A\nsubseteq P$, we conclude that
$C\subseteq P$. Then
\[
P=AC\subseteq AP\subseteq P\text{.}
\]
Hence $P=AP$.
\end{proof}

\section{Prime, Irreducible and Primary Ideals of a Leavitt Path Algebra}

In this section, we investigate special types of ideals in $L$ such as the
prime, the irreducible and the primary ideals. While these three concepts are
independent for ideals in a commutative ring, we show that they coincide for
graded ideals in the Leavitt path algebra $L$. We also show that a non-graded
ideal $I$ of $L$ is irreducible if and only if $I$ is a primary ideal if and
only if $I=P^{n}$, a power of a prime ideal $P$. This is useful in the
factorization of ideals in the next section. We also point out some
interesting properties of the prime ideals in $L$.

The following description of prime ideals of $L$ was given in \cite{R1}.

\begin{theorem}
\label{R-1}(Theorem 3.2, \cite{R1}) \ An ideal $P$ of $L=L_{K}(E)$ with $P\cap
E^{0}=H$ is a prime ideal if and only if $P$ satisfies one of the following properties:

(i) $\ \ P=I(H,B_{H})$ and $E^{0}\backslash H$ is downward directed;\ 

(ii) $\ P=I(H,B_{H}\backslash\{u\})$, $v\geq u$ for all $v\in E^{0}\backslash
H$ and the vertex $u^{\prime}$ that corresponds to $u$ in $E\backslash
(H,B_{H}\backslash\{u\})$ is a sink;

(iii) $P$ is a non-graded ideal of the form $\ P=I(H,B_{H})+<p(c)>$, where $c$
is a cycle without exits based at a vertex $u$ in $E\backslash(H,B_{H})$,
$v\geq u$ for all $v\in E^{0}\backslash H$ and $p(x)$ is an irreducible
polynomial in $K[x,x^{-1}]$ such that $p(c)\in P$.
\end{theorem}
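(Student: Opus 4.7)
The plan is to reduce the problem by passing to the quotient $\bar{L} = L/gr(P)$, using Theorem \ref{R-2}(b) which guarantees that $gr(P) = I(H,S)$ is itself a prime graded ideal, together with the standard isomorphism $\bar{L} \cong L_K(E\setminus(H,S))$. This transfers the analysis to a Leavitt path algebra $L_K(F)$ of the quotient graph $F = E\setminus(H,S)$, inside which $\bar{P} = P/gr(P)$ is a prime ideal containing no vertex of $F$ (for if a vertex of $F$ lay in $\bar{P}$, the hereditary closure would force a larger $H$). The cases (i)--(iii) then come from splitting by whether $\bar{P} = 0$ (graded case, subdivided by the shape of $S$) or $\bar{P} \neq 0$ (non-graded case).

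A key preliminary is the criterion that $L_K(F)$ is a prime ring iff $F^0$ is downward directed, which is standard and follows from the fact that every non-zero ideal of a Leavitt path algebra contains a vertex. In the graded case $\bar{P} = 0$, this criterion applies directly to $L_K(F)$. If $S = B_H$, then $F^0 = E^0\setminus H$, and downward-directedness gives case (i). If $S \subsetneq B_H$, then for every $u \in B_H\setminus S$ the graph $F$ acquires an extra vertex $u'$, and a direct inspection of the definition of $E\setminus(H,S)$ shows that every edge of $F^1$ has its source in $E^0$, so no edge has source $u'$ and $u'$ is automatically a sink. Since a sink only descends to itself, downward-directedness of $F^0$ forces $|B_H\setminus S| = 1$ together with every other vertex of $F^0$ reaching the unique sink $u'$; translating back, this is exactly $S = B_H\setminus\{u\}$ and $v \geq u$ for all $v \in E^0\setminus H$, which is case (ii).

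In the non-graded case $\bar{P} \neq 0$, apply Theorem \ref{R-2} inside $\bar{L}$: since $\bar{P}$ is non-graded with $\bar{P} \cap F^0 = \emptyset$, one has $\bar{P} = \sum_{t \in T}\langle f_t(c_t)\rangle$ for pairwise vertex-disjoint cycles $c_t$ without exits in $F$, sitting inside the ring direct sum $M = \bigoplus_t M_t$ of Theorem \ref{Ideal generated by c^0- c cycle withour exts}. The orthogonality $M_r M_s = 0$ for $r \neq s$, combined with the fact that no $M_t$ lies in $\bar{P}$ (each $M_t$ contains vertices), forces $|T| = 1$ by primeness. Hence $\bar{P} = \langle p(c)\rangle$ for a single cycle $c$ without exits based at some $u$, and Morita equivalence $M_t \cong M_\Lambda(K[x,x^{-1}])$ converts primeness of $\langle p(c)\rangle$ into primeness of $\langle p(x)\rangle$ in the PID $K[x,x^{-1}]$, i.e., $p(x)$ irreducible. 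The same sink-based argument rules out $S \subsetneq B_H$ (any extra sink $u'$ would be orthogonal to $M_t$), and downward-directedness inside $L/P$ forces $v \geq u$ for every $v \in E^0\setminus H$, yielding case (iii).

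For sufficiency, reverse the reduction: check directly that $L/P$ is prime in each case. In (i) and (ii), $L/P$ is a Leavitt path algebra whose vertex set is downward directed (with a unique global sink in case (ii)), hence prime. In (iii), one shows that $L/P$ is Morita equivalent to the field $K[x,x^{-1}]/\langle p(x)\rangle$ and therefore prime. The delicate part, and the main obstacle, is the graded-case bookkeeping: proving that admissible pairs with $|B_H\setminus S|\ge 2$ cannot yield a prime, and the matching claim in the non-graded case that $c$ must be a cycle without exits and $S = B_H$. Both hinge on the sink property of the extra vertices $u'$ together with the direct-sum decomposition of $M$, so the cleanest write-up would isolate that sink lemma and apply it uniformly in both cases.
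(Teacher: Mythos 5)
First, note that the paper does not prove this statement at all: Theorem \ref{R-1} is quoted verbatim from Theorem 3.2 of \cite{R1} and used as a black box, so there is no in-paper proof to compare against. Your sketch does follow the architecture of the standard proof in the literature: pass to $\bar{L}=L/gr(P)\cong L_K(E\backslash(H,S))$, observe $\bar{P}$ contains no vertex of the quotient graph, use the criterion ``$L_K(F)$ is prime iff $F^0$ is downward directed'' for the graded case, and use Theorem \ref{R-2} together with the direct-sum decomposition $M=\bigoplus_t M_t$ to force a single cycle without exits and an irreducible $p(x)$ in the non-graded case. The necessity direction as you lay it out is essentially sound, including the observation that the orthogonality $M_rM_s=0$ with $M_r,M_s\nsubseteq\bar{P}$ kills $|T|\geq 2$.

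There are, however, two concrete errors. First, your justification of the primeness criterion --- ``every non-zero ideal of a Leavitt path algebra contains a vertex'' --- is false: for the rose with one petal, $L_K(E)\cong K[x,x^{-1}]$ and $\langle 1+x\rangle$ contains no vertex. Only \emph{graded} nonzero ideals must contain a vertex; the genuine content of the criterion (the reduction of an arbitrary nonzero element to a corner $vLv$, where it becomes either a scalar multiple of $v$ or a polynomial in a cycle without exits) is exactly what you are sweeping under ``standard,'' and it is the same machinery you need again in the non-graded case. Second, and more seriously, your sufficiency argument for case (iii) asserts that $L/P$ is Morita equivalent to the field $K[x,x^{-1}]/\langle p(x)\rangle$. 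That is only the ideal $M_c/\bar{P}$ of $L/P$; the full quotient $L/P$ can be strictly larger (take $F$ with an infinite emitter $w$ all of whose edges end at the base $u$ of the loop $c$: then the saturated closure of $c^0$ omits $w$, so $M_c\neq\bar{L}$ and $\bar{L}/M_c\cong K$). The correct route is to show that under the hypothesis $v\geq u$ for all $v$, the ideal $M_c/\bar{P}$ is \emph{essential} in $L/P$ --- every nonzero ideal of $\bar{L}$ meets $u\bar{L}u\cong K[x,x^{-1}]$ nontrivially because every vertex reaches $u$ and $c$ is the unique cycle without exits --- and then transfer primeness from $K[x,x^{-1}]/\langle p(x)\rangle$ through that essential ideal. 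As written, the sufficiency of (iii) has a genuine gap.
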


We shall use the observation that if $E^{0}\backslash H$ contains a cycle
without exits, then in Theorem \ref{R-1} the case (ii) will not occur.

Next we point out some interesting properties related to prime ideals of $L$.

\begin{lemma}
\label{P^n < A < P => A=P^r} Suppose $P$ is a prime ideal of $L$ and $A$ is an
ideal such that, for some integer $n>1$, $P^{n}\subseteq A\subseteq P$. Then
$A=P^{r}$ for some $1\leq r\leq n$.
\end{lemma}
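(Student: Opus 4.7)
The plan is to exploit two tools established earlier: the multiplication ring property (Theorem \ref{LPAs are Multiplication rings}) to peel off factors of $P$ one at a time, and Corollary \ref{Prime property 1} together with the commutativity of ideal multiplication (Theorem \ref{Ideals in LPA commute}) to handle the case in which the ``residual'' factor escapes $P$. The hypothesis $P^n \subseteq A$ is used only at the very end to pin down $r$.

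Since $A \subseteq P$, write $A = PC_1$ using the multiplication ring property. If $C_1 \subseteq P$, apply the same property to the containment $C_1 \subseteq P$ to write $C_1 = PC_2$, so that $A = P^2 C_2$; then iterate. There are two possible outcomes. Either this iteration can be carried out $n$ times to give $A = P^n C_n \subseteq P^n$, in which case $P^n \subseteq A$ forces $A = P^n$ and we take $r = n$. Or the process halts at some $k$ with $1 \leq k \leq n-1$, meaning $A = P^k C_k$ with $C_k \not\subseteq P$.

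In the latter case, $P \subsetneqq C_k + P$, so Corollary \ref{Prime property 1} yields $(C_k + P)P = P$, i.e.\ $C_k P + P^2 = P$. Multiplying by $P^{k-1}$ and invoking commutativity of ideal multiplication gives
\[
A + P^{k+1} = P^k.
\]
To bootstrap this to $A = P^k$, multiply by $P$ on one side: $PA + P^{k+2} = P^{k+1}$, hence $P^{k+1} \subseteq A + P^{k+2}$ (using $PA \subseteq A$). Substituting back,
\[
P^k = A + P^{k+1} \subseteq A + P^{k+2} \subseteq P^k,
\]
the last inclusion holding because $A \subseteq P^k$ and $P^{k+2} \subseteq P^k$. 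So $A + P^{k+2} = P^k$, and the identical argument applied repeatedly yields $A + P^{k+j} = P^k$ for every $j \geq 1$. Taking $j = n-k$ and using $P^n \subseteq A$ gives $A = P^k$, completing the proof with $r = k$.

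The main obstacle I expect is keeping the two containments straight in the bootstrap step: one direction relies on the identity $A + P^{k+1} = P^k$ together with $PA \subseteq A$, while the reverse containment needs the observation that $A \subseteq P^k$, which in turn comes from the factorization $A = P^k C_k$ at the moment the iteration halted. Commutativity of ideal multiplication is used implicitly throughout the bootstrap so that powers of $P$ may be freely shuffled inside products.
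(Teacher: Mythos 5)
Your proof is correct, but it follows a genuinely different route from the paper's. The paper argues structurally: it disposes of the graded case via $P=P^{n}$ (Lemma \ref{Product = Intersection}), and in the non-graded case writes $P=gr(P)+<p(c)>$ with $p(x)$ irreducible (Theorem \ref{R-1}), observes that any intermediate ideal $A$ must be non-graded of the form $gr(P)+<f(c)>$, and reduces the whole question to the divisibility chain $p(x)\mid f(x)\mid p^{n}(x)$ in $K[x]$, forcing $f=p^{r}$. You instead give a purely multiplicative-ideal-theoretic argument in the style of the classical theory of multiplication rings: peel off factors of $P$ via Theorem \ref{LPAs are Multiplication rings}, and when the residual factor $C_{k}$ escapes $P$, use Corollary \ref{Prime property 1} applied to $C_{k}+P\supsetneqq P$ together with commutativity (Theorem \ref{Ideals in LPA commute}) to get $A+P^{k+1}=P^{k}$, then bootstrap to $A+P^{k+j}=P^{k}$ and invoke $P^{n}\subseteq A$ at $j=n-k$. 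I checked the bootstrap induction and the two containments you flag; they are sound, and there is no circularity since everything you cite precedes the lemma in the paper. Two cosmetic remarks: for $k=1$ the phrase ``multiply by $P^{k-1}$'' should be read as ``do nothing,'' since $P^{0}$ is not literally available in a ring with only local units, but the identity $A+P^{2}=P$ is already the displayed equation $C_{1}P+P^{2}=P$; and in the first branch you should note explicitly that $P^{n}C_{n}\subseteq P^{n}$ because $C_{n}$ is an ideal. What each approach buys: the paper's proof yields the explicit generator description of $A$ essentially for free, while yours is independent of the structure theorems \ref{R-1} and \ref{R-2} and shows the lemma to be a formal consequence of $L$ being a multiplication ring with commuting ideal products --- an argument that would transfer to any ring with those properties.
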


\begin{proof}
If $P$ is graded, then $P=P^{n}=A$. So assume that $P$ is a non-graded prime
ideal. By Theorem \ref{R-1}, $P=gr(P)+<p(c)>$ with $gr(P)=I(H,B_{H})$, where
$c$ is a cycle without exits in $E\backslash(H,B_{H})$ and $p(x)$ is an
irreducible polynomial in $K[x]$. By Lemma \ref{Product = Intersection},
$gr(P^{n})=gr(P)$. As
\[
P^{n}/gr(P)=(P/gr(P))^{n}=<p(c)>^{n}%
\]
in $L/gr(P)$, $P^{n}=gr(P)+<p(c)>^{n}$. Now, $A$ must be a non-graded ideal.
Because if $A$ is graded, then $A\subseteq gr(P)\subseteq P^{n}$ and this
implies $A=P^{n}$, a contradiction as $P^{n}$ is not graded. Hence
$A=gr(P)+<f(c)>$ where $f(x)\in K[x]$. In $L/gr(P)$, we have $<p(c)>^{n}%
\subseteq<f(c)>\subseteq<p(c)>$ and hence $f(x)$ is a divisor of $p^{n}(x)$ in
$K[x]$. So $f(x)=p^{r}(x)$ for some $r\leq n$ and $A=gr(P)+<p(c)>^{r}=P^{r}$.
\end{proof}

\begin{proposition}
\label{Prime Property 2} Let $P$ be a prime ideal of a Leavitt path algebra
$L$. Then for any ideal $A$ with $P\subseteq A$, either $P=A$ or $P\subseteq
gr(A)$. \ 
\end{proposition}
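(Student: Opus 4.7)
The argument splits on whether $P$ is graded. If $P$ is graded, then $P=gr(P)$, and since $gr(A)$ is by definition the largest graded ideal contained in $A$, the inclusion $P\subseteq A$ gives $P\subseteq gr(A)$ at once. So I assume $P$ is non-graded and $P\subsetneq A$, and apply Theorem~\ref{R-1}(iii) to write $P=I(H,B_{H})+\langle p(c)\rangle$, with $gr(P)=I(H,B_{H})$, $c$ a cycle without exits in $E\setminus(H,B_{H})$ based at a vertex $u$, $v\geq u$ for every $v\in E^{0}\setminus H$, and $p(x)\in K[x,x^{-1}]$ irreducible. Since $gr(P)\subseteq gr(A)$ is automatic, the goal reduces to showing $p(c)\in gr(A)$.

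The intermediate claim I aim for is that $A\cap E^{0}$ strictly contains $H$. Granted this, pick $w\in (A\cap E^{0})\setminus H$; the hypothesis $v\geq u$ supplies a path $\mu$ with $s(\mu)=w$, $r(\mu)=u$, whence $u=\mu^{*}w\mu\in A$ and so $u\in A\cap E^{0}$. The equality $A\cap E^{0}=gr(A)\cap E^{0}$ (which holds because $A\cap E^{0}$ is hereditary and saturated, so $I(A\cap E^{0},\emptyset)$ is a graded ideal contained in $A$ and hence contained in $gr(A)$) then places $u\in gr(A)$. Consequently $c=uc\in gr(A)$ and $p(c)=up(c)\in gr(A)$, which together with $gr(P)\subseteq gr(A)$ yields $P\subseteq gr(A)$.

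To prove the claim I argue by contradiction, assuming $A\cap E^{0}=H$. Then $gr(A)\cap E^{0}=H$, so $gr(A)=I(H,S)$ for some $S\subseteq B_{H}$; the inclusion $gr(P)\subseteq gr(A)$ forces $B_{H}\subseteq S$, hence $gr(A)=gr(P)$. Working in $\bar L=L/gr(P)$, Theorem~\ref{Ideal generated by c^0- c cycle withour exts} presents the ideal $M$ generated by the vertices of the cycles $c_{t}$ without exits in $E\setminus(H,B_{H})$ as a ring direct sum $M=\bigoplus_{t\in T}M_{t}$, with each $M_{t}\cong M_{\Lambda_{t}}(K[x,x^{-1}])$; labelling $c=c_{t_{0}}$ gives $\bar P=\langle p(c)\rangle\subseteq M_{t_{0}}$. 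Theorem~\ref{R-2} and the note following it show $\bar A\subseteq M$, and $\bar A$ splits compatibly as $\bar A=\bar A_{t_{0}}\oplus \bar Y$ with $\bar Y\subseteq\bigoplus_{t\neq t_{0}}M_{t}$. Corollary~\ref{Prime property 1} gives $AP=P$, which descends to $\bar A\bar P=\bar P$; the orthogonality $M_{t}M_{t_{0}}=0$ for $t\neq t_{0}$ annihilates $\bar Y\bar P$ and reduces the equation to $\bar A_{t_{0}}\langle p(c)\rangle=\langle p(c)\rangle$ inside $M_{t_{0}}$. Via the Morita equivalence $M_{t_{0}}\sim K[x,x^{-1}]$, the corresponding ideal $J$ of the PID $K[x,x^{-1}]$ satisfies $J\langle p(x)\rangle=\langle p(x)\rangle$; irreducibility of $p$ forces $J=K[x,x^{-1}]$, so $\bar A_{t_{0}}=M_{t_{0}}$, which contains $\bar u$. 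Lifting, $u\in A$, hence $u\in A\cap E^{0}=H$, contradicting $u\notin H$.

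The principal obstacle is this contradiction step: decomposing $\bar A$ correctly along $\bigoplus_{t}M_{t}$, locating $\bar P$ in the summand $M_{t_{0}}$, and translating the identity $AP=P$ across the Morita equivalence into an equation in the PID $K[x,x^{-1}]$, where the irreducibility of $p$ collapses $\bar A_{t_{0}}$ to the full summand $M_{t_{0}}$ and produces the offending vertex $\bar u$.
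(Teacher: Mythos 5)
Your proof is correct, but it travels a genuinely different road from the paper's. The paper argues directly: assuming $P\nsubseteq gr(A)$, it uses Theorem \ref{R-2} and Theorem \ref{R-1}(iii) to pin down the shape of $A$ --- downward directedness of $E^{0}\setminus H$ forces every $c_{i}$ to equal $c$, the heredity of $A\cap E^{0}$ forces $A\cap E^{0}=H$ and $S'=B_{H}$, so $A=I(H,B_{H})+\langle f(c)\rangle$ --- and then the minimality of $\deg f$ from Theorem \ref{R-2}(a) together with the irreducibility of $p$ collapses $\langle f(c)\rangle$ to $\langle p(c)\rangle$, giving $A=P$ outright. You instead prove the contrapositive ($P\subsetneq A$ implies $P\subseteq gr(A)$) by first showing $A$ must contain a vertex outside $H$, and you obtain that via a contradiction that leans on Corollary \ref{Prime property 1} (so ultimately on Theorem \ref{LPAs are Multiplication rings}): the identity $AP=P$ descends to $\bar A_{t_{0}}\langle p(c)\rangle=\langle p(c)\rangle$ in the summand $M_{t_{0}}$, and cancellation of $(p)$ in the PID $K[x,x^{-1}]$ across the Morita equivalence forces $\bar A_{t_{0}}=M_{t_{0}}\ni\bar u$. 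Both routes are sound and all the supporting steps check out (the decomposition $\bar A=\bar A_{t_{0}}\oplus\bar Y$, the orthogonality $M_{t}M_{t_{0}}=0$, the identification $A\cap E^{0}=gr(A)\cap E^{0}$, and the non-circular use of Corollary \ref{Prime property 1}, which is proved earlier and independently). The paper's argument is the more elementary one, living entirely inside the structure theorems for ideals and needing nothing about multiplication rings; yours is conceptually appealing in that it exposes the proposition as a consequence of the purely multiplicative fact that no proper ideal $J$ of a domain can satisfy $J\cdot(p)=(p)$, but it pays for this by invoking the heavier machinery of Section 4. One small remark: in your final step you could bypass Morita equivalence entirely by writing $\bar A_{t_{0}}=\langle f(c)\rangle$ and applying Lemma \ref{Pricipal ideals commute} to get $\langle f(c)p(c)\rangle=\langle p(c)\rangle$, whence $f$ is a unit in $K[x,x^{-1}]$ and $\langle f(c)\rangle=\langle u\rangle\ni u$, reaching the same contradiction with lighter tools.
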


\begin{proof}
If $P$ is graded, then clearly $P$ possess the desired properties. So assume
that $P$ is a non-graded ideal. Suppose there is an ideal $A$ such that
$P\subseteq A$ and $P\nsubseteq gr(A)$. Clearly $A$ is non-graded and so we
have, by Theorem \ref{R-2}, $A=I(H^{\prime},S^{\prime})+%
{\displaystyle\sum\limits_{i\in X}}
<f_{i}(c_{i})>$ where $H^{\prime}=A\cap E^{0}$, $X$ is some index set, for
each $i$, $f_{i}(x)\in K[x]$ and $c_{i}$ is a cycle without exits in
$E^{0}\backslash H^{\prime}$. By Theorem \ref{R-1} (iii) , $P=I(H,B_{H}%
)+<p(c)>$, where $H=P\cap E^{0}$, $c$ is a cycle without exits based at a
vertex $u$ in $E\backslash(H,B_{H})$, $v\geq u$ for all $v\in E^{0}\backslash
H$ and $p(x)$ is an irreducible polynomial in $K[x]$. Clearly, $c$ is the only
cycle without exits in $E^{0}\backslash H$ based at a vertex $u$. \ As
$P\nsubseteqq gr(A)=I(H^{\prime},S^{\prime})$, $p(c)\notin I(H^{\prime
},S^{\prime})$ and $c$ is a cycle without exits in $E^{0}\backslash H^{\prime
}$ also. Since $v\geq u$ for all $v\in E^{0}\backslash H^{\prime}$, $c_{i}=c$
for all $i$. Moreover, $H^{\prime}=H$ because if there is a $v\in H^{\prime
}\backslash H$, then $v\geq u$ implies that $u\in H^{\prime}$, a
contradiction. In this case $S^{\prime}=B_{H}$. Thus $A$ is of the form
$A=I(H,B_{H})+<f(c)>$ where $f(x)\in K[x]$. Clearly, in $L/I(H,B_{H})$,
$<p(c)>\subseteq<f(c)>$. Since, by Theorem \ref{R-2}, $f(x)$ is a polynomial
of smallest degree in $K[x]$ such that $f(c)\in A$ and since $p(x)$ is
irreducible in $K[x]$, $<p(c)>=<f(c)>$ in $L/I(H,B_{H})$. Then $A=I(H,B_{H}%
)+<p(c)>=P$, as desired.
\end{proof}

\textbf{Remark}: Corollary \ref{Prime property 1} (that, for a prime ideal
$P$, $P=PA$ for any ideal $A\supsetneqq P$) can also be derived from
Proposition \ref{Prime Property 2}, as $P=P\cap gr(A)=Pgr(A)\subseteq
PA\subseteq P$.

Recall, an ideal $I$ of a ring $R$ is called an \textbf{irreducible ideal} if,
for ideals $A,B$ of $R$, $I=A\cap B$ implies that either $I=A$ or $I=B$. Given
an ideal $I$, the \textbf{radical }$Rad(I)$ of $I$ is the intersection of all
prime ideals containing $I$. A useful property is that if $a\in Rad(I) $, then
$a^{n}\in I$ for some integer $n\geq0$ (see \cite{Lam}). An ideal $I $ of $R$
is said to be a \textbf{primary ideal} if, for any two ideals $A,B$, if
$AB\subseteq I$ and $A\nsubseteqq I$, then $B\subseteq Rad(I)$.

\begin{lemma}
\label{gr(Rad(I))=gr(I)} For any ideal $I$ of $L$, $gr(Rad(I))=gr(I)$.
\end{lemma}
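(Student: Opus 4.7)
The plan is to prove the two inclusions separately, with the forward one being essentially immediate and the backward one relying on the existence of local units in graded ideals together with the standard radical property.

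First, I would observe that the inclusion $gr(I)\subseteq gr(Rad(I))$ is trivial: since $I\subseteq Rad(I)$, the graded ideal $gr(I)$ sits inside $Rad(I)$, and by Theorem \ref{R-2}(b) (or by definition in the graded case) $gr(Rad(I))$ is the largest graded ideal contained in $Rad(I)$, so $gr(I)\subseteq gr(Rad(I))$.

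For the reverse inclusion $gr(Rad(I))\subseteq gr(I)$, write $J=gr(Rad(I))$. The key observation is that $J$ is a graded ideal and hence, by the remark in the Preliminaries that every graded ideal is isomorphic to a Leavitt path algebra of some graph and thus has local units, for every $a\in J$ there exists an idempotent $u\in J$ with $a=ua=au$. Since $u\in J\subseteq Rad(I)$, the stated radical property gives $u^{n}\in I$ for some integer $n\geq 1$. But $u$ is idempotent, so $u=u^{n}\in I$, and therefore $a=ua\in I$. This shows $J\subseteq I$. Since $J$ is a graded ideal contained in $I$, and $gr(I)$ is the largest such (Theorem \ref{R-2}(b)), we conclude $J\subseteq gr(I)$.

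Combining the two inclusions gives $gr(Rad(I))=gr(I)$. I don't anticipate any real obstacle here; the only subtle point is invoking local units in $J$, which is justified by the fact recorded in the Preliminaries that every graded ideal $I(H,S)$ is isomorphic to a Leavitt path algebra and hence possesses local units. Once that is noted, the idempotency trick $u=u^{n}\in I$ does all the work.
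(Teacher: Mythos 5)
Your proof is correct and follows essentially the same route as the paper: both arguments reduce to the observation that an idempotent $u\in Rad(I)$ satisfies $u=u^{n}\in I$, the paper phrasing this via the fact that the graded ideal $gr(Rad(I))$ is generated by idempotents, while you reach the same conclusion via local units. The two are interchangeable here, and your use of Theorem \ref{R-2}(b) to pass from $J\subseteq I$ to $J\subseteq gr(I)$ is exactly what is needed.
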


\begin{proof}
Clearly $gr(I)\subseteq gr(Rad(I))$. On the other hand, if $a\in Rad(I)$, then
$a^{n}\in I$ for some integer $n\geq0$. This means that every idempotent
element in $Rad(I)$ belongs to $I$. Since the graded ideal $gr(Rad(I))$ is
generated by idempotents, $gr(Rad(I))\subseteq I$ and hence $gr(Rad(I))=gr(I)$.
\end{proof}

\textbf{Remark}: We note in passing that for any graded ideal $I$ of $L$, say
$I=I(H,S)$, $Rad(I)=I$. Because, $Rad(I)/I$ is a nil ideal in $L/I$ and
$L/I\cong L_{K}(E\backslash(H,S))$ has no non-zero nil ideals.

\begin{lemma}
\label{gr(Primary) is prime} Let $I$ be a primary or an irreducible ideal of
$L$. Then $gr(I)$ is a prime ideal.
\end{lemma}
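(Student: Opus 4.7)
The strategy is to first show that $gr(I)$ is \emph{graded prime}, meaning that for every pair of graded ideals $A, B$ of $L$ with $AB \subseteq gr(I)$ one has $A \subseteq gr(I)$ or $B \subseteq gr(I)$, and then to promote this to primeness of $gr(I)$ as an ideal of $L$. Fix graded ideals $A, B$ of $L$ with $AB \subseteq gr(I)$; by Lemma~\ref{Product = Intersection}, $AB = A \cap B$.

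If $I$ is primary, then $AB \subseteq gr(I) \subseteq I$ and the primary hypothesis forces $A \subseteq I$ or $B \subseteq Rad(I)$. Since $A$ and $B$ are graded, $A \subseteq I$ collapses to $A \subseteq gr(I)$, while $B \subseteq Rad(I)$ collapses to $B \subseteq gr(Rad(I))$, which equals $gr(I)$ by Lemma~\ref{gr(Rad(I))=gr(I)}. In either case, graded primeness is verified.

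If $I$ is irreducible, I would apply the distributive law (Theorem~\ref{LPA is Arithmetic}) twice to compute
\[
(I + A) \cap (I + B) = I + \bigl((I + A) \cap B\bigr) = I + (I \cap B) + (A \cap B) = I + AB = I,
\]
using $AB \subseteq I$ at the last step. Irreducibility of $I$ then forces $I + A = I$ or $I + B = I$, i.e.\ $A \subseteq I$ or $B \subseteq I$, and graded-ness upgrades these to $A \subseteq gr(I)$ or $B \subseteq gr(I)$.

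Finally, to promote graded primeness of $gr(I)$ to primeness in $L$, I would appeal to Theorem~\ref{R-1}: writing $gr(I) = I(H, S)$, a graded ideal is prime precisely when $E^0 \setminus H$ is downward directed (with a possible breaking-vertex exception of case (ii)). A failure of these graph-theoretic conditions would exhibit two graded ideals strictly containing $gr(I)$ with product inside $gr(I)$—for instance by forming the ideals generated by the hereditary saturated closures of a pair of vertices in $E^0 \setminus H$ without a common successor—contradicting the graded primeness just established. Navigating this last graph-theoretic bridge, especially the exceptional case (ii) of Theorem~\ref{R-1}, is the main obstacle; the preceding algebraic reductions are routine applications of the arithmetical structure together with Lemma~\ref{gr(Rad(I))=gr(I)}.
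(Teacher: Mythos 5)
Your two main reductions are exactly the paper's. For the primary case you argue, for graded ideals $A,B$ with $AB\subseteq gr(I)$, that $A\subseteq I$ forces $A\subseteq gr(I)$ (since $gr(I)$ is the largest graded ideal inside $I$) while $B\subseteq Rad(I)$ forces $B\subseteq gr(Rad(I))=gr(I)$ via Lemma~\ref{gr(Rad(I))=gr(I)}; for the irreducible case you compute $(I+A)\cap(I+B)=I+AB=I$ from distributivity and Lemma~\ref{Product = Intersection}(i). Both computations coincide with the paper's proof of this lemma.

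The one place you diverge is the final step, which you yourself flag as the main obstacle: passing from ``$gr(I)$ is prime within the lattice of graded ideals'' to ``$gr(I)$ is prime in $L$.'' The paper does not argue this via the graph; it simply invokes Proposition II.1.4 of Nastasescu--van Oystaeyen \cite{NO}, the standard fact that in a $\mathbb{Z}$-graded ring a graded ideal is prime as soon as it is prime for graded ideals (equivalently, for homogeneous elements). Your proposed detour through Theorem~\ref{R-1} is not needed and is riskier than you suggest: a general graded ideal has the form $I(H,S)$ with $S$ an arbitrary subset of $B_H$, so you would have to rule out not only failures of downward directedness but also the cases $S\subsetneq B_H$ beyond the single-vertex exception of case (ii), and the argument ``two vertices with no common successor generate graded ideals whose product lies in $gr(I)$'' requires checking that neither generated ideal is contained in $gr(I)$ and that their intersection (which equals their product by Lemma~\ref{Product = Intersection}) really lands inside $I(H,S)$. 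None of this is impossible, but it reproves a known general fact in a special case. Replacing that paragraph with the citation to \cite{NO} closes the gap and makes your proof identical in substance to the paper's.
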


\begin{proof}
Suppose $I$ is a primary ideal. In view of Proposition II.1.4, Chapter II in
\cite{NO}, we need only to show that $gr(I)$ is graded prime. Consider two
graded ideals $A,B$ such that $AB\subseteq gr(I)$ and $A\nsubseteqq gr(I)$. As
$A$ is graded, $A\nsubseteqq I$. Since $I$ is primary, $B\subseteq Rad(I)$. As
$B$ is a graded ideal, we have $B\subseteq gr(Rad(I))=gr(I)$, by Lemma
\ref{gr(Rad(I))=gr(I)}. Hence $gr(I)$ is a prime ideal.

Suppose now that $I$ is an irreducible ideal. As before, we need only to show
that $gr(I)$ is graded prime. Suppose $A,B$ are graded ideals of $L$ such that
$AB\subseteq gr(I)$. If both $A\nsubseteqq gr(I)$ and $B\nsubseteqq gr(I)$,
then again $A\nsubseteqq I$ and $B\nsubseteqq I$. By distributive law (Theorem
\ref{LPA is Arithmetic}) we then have,
\begin{align*}
(I+A)\cap(I+B)  &  =(I\cap I)+(I\cap B)+(A\cap I)+(A\cap B)\\
&  =I+(AB)\text{, as }(A\cap B)=AB\text{, by Lemma
\ref{Product = Intersection}(i)}\\
&  =I\text{.}%
\end{align*}
This contradicts the fact that $I$ is irreducible. Hence $gr(I)$ is a prime ideal.
\end{proof}

\begin{corollary}
\label{For graded I, primary=irred=prime} Suppose $I$ is a graded ideal of $L
$. Then the following are equivalent:

(a) $I$ is a primary ideal;

(b) $I$ is a prime ideal;

(c) $I$ is an irreducible ideal.
\end{corollary}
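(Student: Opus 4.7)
The plan is to establish the three equivalences by proving the cycle (a)$\Rightarrow$(b)$\Rightarrow$(a) and (b)$\Rightarrow$(c)$\Rightarrow$(b), where each of the four implications is short.

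First, the implications (a)$\Rightarrow$(b) and (c)$\Rightarrow$(b) fall out immediately from Lemma \ref{gr(Primary) is prime}: that lemma tells us that $gr(I)$ is prime whenever $I$ is primary or irreducible, and since $I$ is graded we have $gr(I)=I$, so $I$ itself is prime.

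Next, for (b)$\Rightarrow$(a), I would argue directly from the definition of primary ideal. Suppose $I$ is prime and take ideals $A,B$ of $L$ with $AB\subseteq I$ and $A\nsubseteq I$; then primality of $I$ gives $B\subseteq I$, and since $I\subseteq Rad(I)$ we get $B\subseteq Rad(I)$, which is exactly the primary condition. Finally, for (b)$\Rightarrow$(c), suppose $I$ is prime and $I=A\cap B$ for some ideals $A,B$. The inclusion $AB\subseteq A\cap B=I$ is automatic, so by primality either $A\subseteq I$ or $B\subseteq I$; combined with $I\subseteq A$ and $I\subseteq B$, this forces $I=A$ or $I=B$, showing that $I$ is irreducible.

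There is no real obstacle: once Lemma \ref{gr(Primary) is prime} is in hand, the remaining two implications are formal consequences of the definitions together with the trivial fact that $AB\subseteq A\cap B$ for any ideals $A,B$. The content of the corollary lies entirely in the earlier lemma, which exploits the distributive law (Theorem \ref{LPA is Arithmetic}) and the identification $A\cap B=AB$ for graded ideals (Lemma \ref{Product = Intersection}) to force $gr(I)$ to be prime.
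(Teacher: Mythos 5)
Your proof is correct and matches the paper's intent: the corollary is stated without proof immediately after Lemma \ref{gr(Primary) is prime} precisely because, for a graded ideal, $gr(I)=I$ turns that lemma into (a)$\Rightarrow$(b) and (c)$\Rightarrow$(b), while (b)$\Rightarrow$(a) and (b)$\Rightarrow$(c) are the standard formal consequences of the definitions you give. Nothing is missing.
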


We are now ready to prove the main result of this section which extends the
above corollary to arbitrary ideals of $L$.

\begin{theorem}
\label{Irred = primary =prime power} Let $L=L_{K}(E)$ be the Leavitt path
algebra of an arbitrary graph $E$. Then the following properties are
equivalent for an ideal $I$ of $L$:

(i) $\ \ \ I$ is an irreducible ideal;

(ii) \ $\ I=P^{n}$, a power of a prime ideal $P$ for some $n\geq1$;

(iii) \ $I$ is a primary ideal.
\end{theorem}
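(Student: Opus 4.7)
My plan is to reduce to analysis inside the quotient $\bar L = L/gr(I)$, where by Theorem~\ref{R-2} the non-graded part $\bar I = I/gr(I)$ sits inside the ring direct sum $M = \bigoplus_t M_t$ of Theorem~\ref{Ideal generated by c^0- c cycle withour exts}, with each $M_t$ Morita equivalent to the PID $K[x,x^{-1}]$ whose ideal theory is completely understood. If $I$ is graded, Corollary~\ref{For graded I, primary=irred=prime} already identifies (i), (ii), (iii) with $I$ being prime (so $n=1$ works), and so from now on I assume $I$ non-graded. Then Lemma~\ref{gr(Primary) is prime} gives that $gr(I)$ is a graded prime ideal, and writing $\bar I = \bigoplus_{t\in T}\langle f_t(c_t)\rangle$ via Theorem~\ref{R-2}, the remark after Theorem~\ref{R-1} puts $gr(I)$ in case (i) of that theorem, so $E^0\setminus H$ is downward directed (where $H = gr(I)\cap E^0$).

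For (i)$\Rightarrow$(ii) and (iii)$\Rightarrow$(ii) I would first show $|T|=1$. If $I$ is irreducible and $|T|\ge 2$, fix $t_0\in T$ and set $\bar A = M_{c_{t_0}} + \sum_{t\ne t_0}\langle f_t(c_t)\rangle$ and $\bar B = \langle f_{t_0}(c_{t_0})\rangle + \sum_{t\ne t_0} M_{c_t}$; the direct-sum decomposition $M = \bigoplus_t M_t$ forces $\bar A\cap\bar B = \bar I$, while each strictly contains $\bar I$, contradicting irreducibility. If $I$ is primary and $|T|\ge 2$, pick distinct $t,t'\in T$; then $M_{c_t}M_{c_{t'}} = 0 \subseteq \bar I$ and $M_{c_t}\not\subseteq\bar I$, so being primary would force $M_{c_{t'}}\subseteq Rad(\bar I)$. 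But any vertex $v\in c_{t'}^0$ is an idempotent, so $v\in Rad(\bar I)$ gives $v = v^k\in\bar I$, and since $v$ generates $M_{c_{t'}}$ as a two-sided ideal (the cycle having no exits means the hereditary saturated closure of $\{v\}$ is $c_{t'}^0$), this would force $\langle f_{t'}(c_{t'})\rangle = M_{c_{t'}}$, contradicting that $f_{t'}$ is a polynomial of minimal positive degree in the sense of Theorem~\ref{R-2}(a).

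Once $|T|=1$, $\bar I = \langle f(c)\rangle$ corresponds under Morita equivalence to $\langle f(x)\rangle$ in $K[x,x^{-1}]$, and I would argue $f$ must be a prime power. In the irreducible case, if $f = gh$ with coprime non-units $g, h$, then $(f) = (g)\cap(h)$ in the PID lifts to $\langle f(c)\rangle = \langle g(c)\rangle \cap \langle h(c)\rangle$ inside $M_c$, and hence inside $\bar L$ since $M_c$ has local units and is itself a two-sided ideal (so its own ideals are automatically two-sided in $\bar L$), contradicting irreducibility. In the primary case, if $f = p_1^{a_1}\cdots p_r^{a_r}$ with $r\ge 2$, set $g = p_1^{a_1}$ and $h = p_2^{a_2}\cdots p_r^{a_r}$; Lemma~\ref{Pricipal ideals commute} then gives $\langle g(c)\rangle\langle h(c)\rangle = \langle f(c)\rangle \subseteq \bar I$ while $\langle g(c)\rangle\not\subseteq\bar I$, and since $p_1\nmid h^m$ for every $m$ no power of $h(c)$ lies in $\bar I$, so $\langle h(c)\rangle\not\subseteq Rad(\bar I)$, a contradiction. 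Thus $f = p^n$ with $p\in K[x,x^{-1}]$ irreducible, and setting $P = gr(I) + \langle p(c)\rangle$, Theorem~\ref{R-1}(iii) yields $P$ prime once I verify $v\ge s(c)$ for every $v\in E^0\setminus H$, which follows from downward-directedness combined with $c$ having no exits. A short calculation using Lemma~\ref{Pricipal ideals commute} and Lemma~\ref{Product = Intersection} then gives $P^n = gr(I) + \langle p(c)^n\rangle = I$.

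The reverse implications (ii)$\Rightarrow$(i) and (ii)$\Rightarrow$(iii) are comparatively easy: by Lemma~\ref{P^n < A < P => A=P^r} the only ideals sandwiched between $P^n$ and $P$ are the powers $P^r$, with $P^r\cap P^s = P^{\max(r,s)}$, and by Proposition~\ref{Prime Property 2} together with Corollary~\ref{Prime property 1} any ideal not contained in $P$ either equals $P$ or contains $P$ in its graded part and absorbs $P$ multiplicatively; combining these handles both irreducibility and the primary property for $I = P^n$. The hard part, I expect, is the $|T|=1$ reduction in the primary direction: verifying that $M_{c_{t'}}$ really is a two-sided ideal of $\bar L$ (not merely of $M$), and that being primary in $L$ correctly transfers through the quotient so that the contradiction derived in $\bar L$ is legitimate.
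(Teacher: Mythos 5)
Your proposal is correct and follows essentially the same route as the paper's proof: reduce to the graded case via Corollary \ref{For graded I, primary=irred=prime}, use Lemma \ref{gr(Primary) is prime} and Theorem \ref{R-1} to get $gr(I)=I(H,B_H)$ with $E^0\setminus H$ downward directed and a single no-exit cycle $c$, transfer to $K[x,x^{-1}]$ via Theorem \ref{Ideal generated by c^0- c cycle withour exts} to force $f$ to be a power of an irreducible polynomial, and use Lemma \ref{P^n < A < P => A=P^r} for (ii)$\Rightarrow$(i). The only real deviation is that your separate irreducible/primary arguments for $|T|=1$ are unnecessary: once $E^0\setminus H$ is downward directed, two disjoint cycles without exits cannot coexist there, which is how the paper obtains the single cycle immediately.
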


\begin{proof}
(i) =
$>$
(ii). Suppose $I$ is an irreducible ideal. If $I$ is a graded ideal, then $I$
must be a prime ideal, by Corollary \ref{For graded I, primary=irred=prime}.

Suppose $I$ is a non-graded irreducible ideal. By Theorem \ref{R-2},
$I=I(H,S)+\sum_{i\in X}<f_{i}(c_{i})>$, where for each $i$, $c_{i}$ is a cycle
without exits based at a vertex $v_{i}$ in $E\backslash(H,S)$ and $f_{i}(x)\in
K[x]$. From Lemma \ref{gr(Primary) is prime}, $gr(I)$ is a graded prime ideal.
By Theorem \ref{R-1}, we then have $gr(I)=I(H,B_{H})$ and $E\backslash
(H,B_{H})$ is downward directed. Hence there can be only one cycle, say $c$
based at a vertex $v$ and without exits in $E\backslash(H,B_{H})$. Thus $I$ is
of the form $I=I(H,B_{H})+<f(c)>$ for some polynomial $f(x)\in K[x]$. Let
$f(x)=p_{1}^{k_{1}}(x)\cdot\cdot\cdot p_{m}^{k_{m}}(x)$ be a factorization of
$f(x)$ as a product of powers of distinct irreducible polynomials $p_{i}(x)$
in $K[x]$. We claim $m=1$, that is, $f(x)$ is a power of a single irreducible
polynomial. Assume, on the contrary, $m>1$. Let $g(x)=$ $p_{1}^{k_{1}}(x)$ and
$h(x)=p_{2}^{k_{2}}(x)\cdot\cdot\cdot p_{m}^{k_{m}}(x)$. Note that each
$p_{i}(x)$ is still irreducible in $K[x,x^{-1}]$. Clearly, $<f(x)>=(g(x)>\cap
<h(x)>$ in $K[x,x^{-1}]$. If $M$ is the (graded) ideal generated by $c^{0}$ in
$L/I(H,B_{H})$, then $M$ contains the ideals $<f(c)>,<g(c)>,<h(c)>$ and, by
Theorem \ref{Ideal generated by c^0- c cycle withour exts}, $M\cong
M_{\Lambda}(K[x,x^{-1}])$ which is Morita equivalent to $K[x,x^{-1}]$. As the
ideal lattices of $M$ and $K[x,x^{-1}]$ are isomorphic, we then conclude that,
in $M$ and hence in $L/I(H,B_{H})$, $<f(c)>=<g(c)>\cap<h(c)>$. Let
$A=I(H,B_{H})+<g(c)>$ and $B=I(H,B_{H})+<h(c)>$. Then $[A/I(H,B_{H}%
)]\cap\lbrack B/I(H,B_{H})]=I/I(H,B_{H})=<f(c)>$ and so $A\cap B=I$. Since
$A\neq I$ and $B\neq I$, this contradicts that $I$ is irreducible. Hence
$I=I(H,B_{H})+<p^{n}(c)>$ where $p(x)$ is an irreducible polynomial and $c$ is
a cycle without exits in $E^{0}\backslash H$ and $E^{0}\backslash H$ is
downward directed. It is then clear that $P=I(H,B_{H})+<p(c)>$ is a prime
ideal and that $gr(P^{n})=[I(H,B_{H})]^{n}=I(H,S)$. In $L/I(H,S)$, using Lemma
\ref{Pricipal ideals commute}, we have
\[
P^{n}/I(H,S)=(P/I(H,S))^{n}=<p(c)>^{n}=<p^{n}(c)>.
\]
Consequently, $P^{n}=I(H,S)+<p^{n}(c)>=I$. This proves (ii)

(ii) =
$>$
(iii) Suppose $I=P^{n}$ where $P$ is a prime ideal. Consider two ideals $A,B$
such that $AB\subseteq I\subseteq P$. If $A\nsubseteqq P$, then $B\subseteq
P$. But $P=Rad(I)$. Hence $I$ is a primary ideal.

(iii) =
$>$
(ii). Suppose $P$ is a primary ideal. By Lemma \ref{gr(Primary) is prime} and
Corollary \ref{For graded I, primary=irred=prime}, we may assume that $I$ is a
non-graded ideal such that $gr(I)=I(H,B_{H})$ is a prime ideal. Consequently,
$E\backslash(H,B_{H})$ is downward directed and hence can have no more than
one cycle, say $c$ without exits in $E^{0}\backslash(H,B_{H})$. Then $I$ will
be of the form $I=gr(I)+<f(c)>$ where $f(x)\in K[x]$. We claim that $f(x)$ is
a power of an irreducible polynomial. Suppose, on the contrary, $f(x)=p_{1}%
^{k_{1}}(x)\cdot\cdot\cdot p_{n}^{k_{n}}(x)$ is a factorization of $f(x)$ as a
product of powers of distinct irreducible polynomials $p_{1}(x),\cdot
\cdot\cdot,p_{n}(x)$ where $n>1$. Let $g(x)=p_{1}^{k_{1}}(x)$ and
$h(x)=p_{2}^{k_{2}}(x)\cdot\cdot\cdot p_{n}^{k_{n}}(x)$. Let $A=gr(I)+<g(c)>$
and $B=gr(I)+<h(c)>$. Using Lemma \ref{Pricipal ideals commute}, we have in
$L/gr(I)$,
\[
(A/gr(I))(B/gr(I))=<g(c)><h(c)>=<f(c)>=I/gr(I).
\]
Consequently, $AB=I$. But neither $A/gr(I)$ nor $B/gr(I)$ is contained in
$\ Rad(I)/gr(I)$ which is the ideal\ generated by the product $p_{1}%
(c)\cdot\cdot\cdot\cdot p_{n}(c)$. This contradicts that $I$ is a primary
ideal. Hence, $I=gr(I)+<p^{n}(c)>$ where $p(x)$ is an irreducible polynomial
in $K[x]$ and $n\geq0$. Then $I=P^{n}$ where $P=gr(I)+<p(c)>$ is a prime
ideal. This proves (ii).

(ii) =
$>$
(i) Suppose $I=P^{n}$ where $P$ is a prime ideal. Let $I=A\cap B$ for some
ideals $A,B$ in $L$. Since $I=P\cap I=(P\cap A)\cap(P\cap B)$, we may assume,
without loss of generality, that both $A,B\subseteq P$. Thus $P^{n}\subseteq
A,B\subseteq P$ and so, by Lemma \ref{P^n < A < P => A=P^r}, $A=P^{r}$ and
$B=P^{s}$ for some $r,s\leq n.$Then $P^{r}\cap P^{s}=P^{n}$ implies one of $r$
or $s$ must be $n$. Thus $I=A$ or $B$. Hence $I$ is irreducible.
\end{proof}

\section{Factorization of Ideals in L}

As noted in the Introduction, ideals in an arithmetical ring admit interesting
representations as products of special types of ideals. In this section, we
explore the factorization of ideals in a Leavitt path algebra $L$ as products
of prime ideals and as products of irreducible/primary ideals. The prime
factorization of graded ideals of $L$ seems to influence that of the
non-graded ideals in $L$. Indeed, an ideal $I$ is a product of prime ideals in
$L$ if and only its graded part $gr(I)$ has the same property and, moreover,
$I/gr(I)$ is finitely generated with a generating set of cardinality no more
than the number of distinct prime ideals in an irredundant factorization of
$gr(I)$. We also show that $I$ is an intersection of irreducible ideals if and
only if $I$ is an intersection of prime ideals. If $L$ is the Leavitt path
algebra of a finite graph or, more generally, if $L$ is two-sided noetherian
or two-sided artinian, then every ideal of $L$ is shown to be a product of
prime ideals. The uniqueness of such factorizations was discussed in
\cite{EER}.

We begin with the following useful proposition.

\begin{proposition}
\label{gr(I) prime implies I product of primes}Suppose $I$ is a non-graded
ideal of $L$. If $gr(I)$ is a prime ideal, then $I$ is a product of prime ideals.
\end{proposition}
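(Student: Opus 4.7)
The plan is to reduce $I$ to a very concrete form and then explicitly factor a single polynomial. First, since $I$ is non-graded, Theorem \ref{R-2} gives
\[
I = gr(I) + \sum_{t \in T} \langle f_t(c_t) \rangle,
\]
where the $c_t$ are pairwise disjoint cycles without exits in $E\setminus(H,S)$ with $gr(I)=I(H,S)$. Since $gr(I)$ is a prime ideal, Theorem \ref{R-1} tells us $gr(I)$ falls into one of three cases; case (ii) would force $E^{0}\setminus H$ to contain no cycles without exits (by the observation following Theorem \ref{R-1}), which would make the sum empty and $I$ graded. Hence $gr(I)=I(H,B_{H})$ with $E^{0}\setminus H$ downward directed. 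The downward-directedness together with the ``no exits'' property forces any two cycles without exits in $E\setminus(H,B_{H})$ to share a vertex (take a common lower bound of any two base vertices; from each of the cycles, every path stays on that cycle), hence they coincide. So there is a single cycle $c$ without exits, and
\[
I = gr(I) + \langle f(c) \rangle \quad\text{for some } f(x)\in K[x].
\]

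Next, I would observe that $f(0)\neq 0$. If $f(x)=x\,g(x)$, then $c^{*}f(c)=g(c)\in I$ contradicts the minimality of $\deg f$ guaranteed by Theorem \ref{R-2}(a). Consequently, when we factor $f(x)=p_{1}^{k_{1}}(x)\cdots p_{m}^{k_{m}}(x)$ into distinct irreducibles in $K[x]$, each $p_{i}(x)$ remains irreducible in $K[x,x^{-1}]$. Set
\[
P_{i} = gr(I) + \langle p_{i}(c) \rangle.
\]
Each $P_{i}$ satisfies the hypotheses of Theorem \ref{R-1}(iii): the base $u$ of $c$ satisfies $v\geq u$ for every $v\in E^{0}\setminus H$ (any common lower bound of $v$ and $u$ must, by the no-exit property of $c$, lie on $c$, hence be $\geq u$), and $p_{i}(x)$ is irreducible in $K[x,x^{-1}]$. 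Thus each $P_{i}$ is a prime ideal of $L$.

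It remains to show $I = P_{1}^{k_{1}}\cdots P_{m}^{k_{m}}$, which is the main technical point. For the reverse inclusion, note that $P_{i}\supseteq gr(I)$ for each $i$; since $gr(I)$ is graded, Lemma \ref{graded multiplicatin ring} gives $gr(I)\cdot P_{j}=gr(I)$, so inductively $\prod P_{i}^{k_{i}}\supseteq gr(I)$. Also $\prod P_{i}^{k_{i}} \supseteq \prod \langle p_{i}(c) \rangle^{k_{i}} = \langle p_{1}^{k_{1}}(c)\cdots p_{m}^{k_{m}}(c)\rangle = \langle f(c)\rangle$, where I have used Lemma \ref{Pricipal ideals commute} repeatedly. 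Therefore $\prod P_{i}^{k_{i}} \supseteq gr(I)+\langle f(c)\rangle = I$. For the forward inclusion, I would expand the product distributively: every summand contains either only factors of the form $\langle p_{i}(c)\rangle$ — producing $\langle f(c)\rangle$ via Lemma \ref{Pricipal ideals commute} — or at least one factor $gr(I)$. Since $gr(I)$ is a two-sided graded ideal, Lemma \ref{Product = Intersection}(i) ensures $gr(I)\cdot X\subseteq gr(I)\cap X \subseteq gr(I)$ for any ideal $X$, so every such cross-term is absorbed into $gr(I)$. Hence $\prod P_{i}^{k_{i}} \subseteq gr(I)+\langle f(c)\rangle = I$, completing the equality.

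The only genuinely delicate part is the last paragraph's distributive expansion and the verification that mixed products collapse; this is where the graded-ideal absorption from Lemmas \ref{Product = Intersection} and \ref{graded multiplicatin ring} together with the principal-ideal commutativity of Lemma \ref{Pricipal ideals commute} all have to be used in concert. Everything else amounts to extracting concrete normal forms from Theorems \ref{R-2} and \ref{R-1}.
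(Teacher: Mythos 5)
Your proof is correct and follows essentially the same route as the paper's: reduce to $I=I(H,B_{H})+\langle f(c)\rangle$ with a single exit-free cycle $c$ forced by downward directedness, factor $f$ into irreducibles, and verify $I=\prod P_{i}$ using Lemma \ref{Pricipal ideals commute} together with graded-ideal absorption (the paper performs this last computation in the quotient $L/I(H,B_{H})$, which is only a cosmetic difference). Your explicit check that $x\nmid f(x)$ is a welcome addition that the paper leaves implicit when asserting each $p_{i}(x)$ remains irreducible in $K[x,x^{-1}]$.
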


\begin{proof}
By Theorem \ref{R-2}, $I=I(H,S)+%
{\displaystyle\sum\limits_{t\in T}}
<f_{t}(c_{t})>$, where $T$ is some index set, for each $t\in T,c_{t}$ is a
cycle without exits in $E\backslash(H,S)$, $c_{t}^{0}\cap c_{s}^{0}=\emptyset$
for $t\neq s$ and $f_{t}(x)\in K[x]$. Now $gr(I)=I(H,S)$. If $I(H,S)$ is a
prime ideal, then $E^{0}\backslash H$ is downward directed (Theorem \ref{R-1})
and so there can be only one cycle $c$ without exits in $E\backslash(H,S)$
based at some vertex $v$. This means that $I(H,S)=I(H,B_{H})$ (Theorem
\ref{R-1}) and $I$ must be of the form $I=I(H,B_{H})+<f(c)>$ where $f(x)\in
K[x]$. Let $f(x)=p_{1}(x)\cdot\cdot\cdot p_{n}(x)$ be a factorization of
$f(x)$ as a product of irreducible polynomials $p_{i}(x)$ in $K[x]$. Note that
each $p_{i}(x)$ is irreducible in $K[x,x^{-1}]$. Now, for each $j$,
$P_{j}=I(H,B_{H})+<p_{j}(c)>$ is a prime ideal (Theorem \ref{R-1}). Clearly $%
{\displaystyle\prod\limits_{j=1}^{n}}
P_{j}\supseteq I$. Using Lemma \ref{Pricipal ideals commute} and a simple
induction on $n$, we have, in $L/I(H,B_{H})$,
\begin{align*}%
{\displaystyle\prod\limits_{j=1}^{n}}
P_{j}/I(H,B_{H})  &  =<p_{1}(c)>\cdot\cdot\cdot<p_{n}(c)>\\
&  =<p_{1}(c)\cdot\cdot\cdot p_{n}(c)>=<f(c)>=I/I(H,B_{H}).
\end{align*}
Hence $I=%
{\displaystyle\prod\limits_{j=1}^{n}}
P_{j}$ is a product of prime ideals.
\end{proof}

\begin{theorem}
\label{gr(I) product of primes => I product of primes} Let $E$ be an arbitrary
graph. For a non-graded ideal $I$ of $L:=L_{k}(E)$, the following are equivalent:

(a) $I$ is a product of prime ideals;

(b) $I$ is a product of primary ideals;

(c) $I$ is a product of irreducible ideals;

(d) $gr(I)$ is a product of (graded) prime ideals;

(e) $gr(I)=P_{1}\cap\cdot\cdot\cdot\cap P_{m}$ is an irredundant intersection
of $m$ graded prime ideals $P_{j}$ and $I/gr(I)$ is generated by at most $m$
elements and is of the form $I/gr(I)=%
{\displaystyle\bigoplus\limits_{r=1}^{k}}
<f_{r}(c_{r})>$ where $k\leq m$ and, for each $r=1\cdot\cdot\cdot k$, $c_{r}$
is a cycle without exits in $E^{0}\backslash I$ and $f_{r}(x)\in K[x]$ is a
polynomial of smallest degree such that $f_{r}(c_{r})\in I$.
\end{theorem}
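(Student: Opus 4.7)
The plan is to establish the equivalences along the cycle $(b)\Leftrightarrow(c)\Leftrightarrow(a)\Rightarrow(d)\Rightarrow(e)\Rightarrow(a)$. The first block is essentially free: by Theorem~\ref{Irred = primary =prime power}, primary, irreducible and ``prime power'' coincide for ideals of $L$, so products of primary (respectively irreducible) ideals are exactly products of prime powers, which are products of prime ideals; conversely every prime is trivially primary and irreducible. For $(a)\Rightarrow(d)$, given $I=Q_{1}\cdots Q_{n}$ with each $Q_{i}$ prime, I would first verify $gr(I)=gr(Q_{1})\cdots gr(Q_{n})$: the product $\prod gr(Q_{i})$ is graded and contained in $\prod Q_{i}=I$, hence lies in $gr(I)$, while $gr(I)\subseteq Q_{i}$ for each $i$ forces $gr(I)\subseteq\bigcap_i gr(Q_{i})=\prod_i gr(Q_{i})$ via Lemma~\ref{Product = Intersection}(iii). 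Theorem~\ref{R-2}(b) says each $gr(Q_{i})$ is a graded prime, so $gr(I)$ is a product of graded prime ideals.

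For $(d)\Rightarrow(e)$: write $gr(I)=P_{1}\cdots P_{m}$ with the $P_{j}$ graded prime, rewrite this as $gr(I)=\bigcap_{j=1}^{m}P_{j}$ via Lemma~\ref{Product = Intersection}(iii), and drop redundant factors to assume the intersection is irredundant. Theorem~\ref{R-2} then decomposes $I=gr(I)+\sum_{t\in T}\langle f_{t}(c_{t})\rangle$ with vertex-disjoint cycles $c_{t}$ without exits in $E\setminus(H,S)$ (where $gr(I)=I(H,S)$) and $f_{t}(x)\in K[x]$ of smallest degree. The crux is constructing an injection $\varphi\colon T\hookrightarrow\{1,\dots,m\}$. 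For each $t\in T$, since $c_{t}^{0}\not\subseteq H=\bigcap_{j}H_{j}$ (with $H_{j}=P_{j}\cap E^{0}$) there is some $j$ with $c_{t}^{0}\not\subseteq H_{j}$, and I set $\varphi(t)$ to be any such $j$. Injectivity is the key technical step: if $\varphi(s)=\varphi(t)=j$ for $s\neq t$, then both $c_{s}$ and $c_{t}$ persist as cycles without exits in the quotient $E\setminus(H_{j},S_{j})$; but $E^{0}\setminus H_{j}$ is downward directed by primeness of $P_{j}$ (Theorem~\ref{R-1}), so any $v\in c_{s}^{0}$ and $w\in c_{t}^{0}$ must admit a common descendant, which by the no-exit property is forced to lie in $c_{s}^{0}\cap c_{t}^{0}=\emptyset$, a contradiction. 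Hence $|T|\le m$, yielding (e).

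For $(e)\Rightarrow(a)$: after reordering the $P_{j}$, assume that for each $r=1,\dots,k$ the unique prime with $c_{r}^{0}\not\subseteq H_{j}$ is $P_{r}$. Since $E^{0}\setminus H_{r}$ is downward directed and contains a cycle without exits (namely $c_{r}$), case (ii) of Theorem~\ref{R-1} is excluded, forcing $P_{r}=I(H_{r},B_{H_{r}})$. Set $J_{r}:=P_{r}+\langle f_{r}(c_{r})\rangle$; its graded part is the prime $P_{r}$, so Proposition~\ref{gr(I) prime implies I product of primes} exhibits $J_{r}$ as a product of prime ideals. I then claim
\[
I \;=\; J_{1}\cdots J_{k}\cdot P_{k+1}\cdots P_{m}.
\]
Both sides have graded part $\prod_{j}P_{j}=gr(I)$. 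Modulo $gr(I)$ I expand the right side using commutativity (Theorem~\ref{Ideals in LPA commute}) into $2^{k}$ terms indexed by subsets $S\subseteq\{1,\dots,k\}$ (choosing $P_{r}$ or $\langle f_{r}(c_{r})\rangle$ from each $J_{r}$), absorb the $P_{j}$-factors for $j\in S\cup\{k+1,\dots,m\}$ via Lemma~\ref{graded multiplicatin ring} (since $\langle f_{r}(c_{r})\rangle\subseteq P_{j}$ whenever $j\neq r$, because $c_{r}^{0}\subseteq H_{j}$), and kill the cross-terms $\langle f_{r}(c_{r})\rangle\cdot\langle f_{s}(c_{s})\rangle$ for $r\neq s$ via the ring direct sum decomposition of Theorem~\ref{Ideal generated by c^0- c cycle withour exts}. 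What survives is exactly $\sum_{r=1}^{k}\langle f_{r}(c_{r})\rangle=I/gr(I)$, completing the formula and exhibiting $I$ as a product of primes. The main obstacle is precisely this last computation---combining commutativity, graded absorption, and the direct-sum structure of the cycle-without-exits part of $L/gr(I)$ to confirm the product identity mod $gr(I)$; the other delicate step is injectivity of $\varphi$ in $(d)\Rightarrow(e)$.
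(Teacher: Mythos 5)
Your overall architecture matches the paper's, and most of the steps are sound: the cycle (a)$\Leftrightarrow$(b)$\Leftrightarrow$(c) via Theorem \ref{Irred = primary =prime power}, the computation $gr(I)=\prod_i gr(Q_i)=\bigcap_i gr(Q_i)$ for (a)$\Rightarrow$(d), and the bound $|T|\le m$ for (d)$\Rightarrow$(e) are all correct. Your injectivity argument for $\varphi$ --- using downward directedness of $E^0\setminus H_j$ together with the no-exit property to force a common descendant of $c_s^0$ and $c_t^0$ into $c_s^0\cap c_t^0=\emptyset$ --- is a legitimate variant of the paper's argument (which instead derives the bound from the stronger claim that $v_t\in P_j$ for all $j\neq j_t$) and it works.

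The gap is in (e)$\Rightarrow$(a), where you write ``after reordering the $P_j$, assume that for each $r$ the \emph{unique} prime with $c_r^0\not\subseteq H_j$ is $P_r$.'' Your (d)$\Rightarrow$(e) argument established only that the assignment $t\mapsto\varphi(t)$ (choosing \emph{some} $j$ with $c_t^0\not\subseteq H_j$) can be made injective; it did not rule out that for a fixed $r$ there are two distinct indices $j_1\neq j_2$ with $c_r^0\not\subseteq H_{j_1}$ and $c_r^0\not\subseteq H_{j_2}$. This uniqueness is exactly what your absorption step requires: you need $\langle f_r(c_r)\rangle\subseteq P_j$ for all $j\neq r$ so that $\langle f_r(c_r)\rangle\prod_{j\neq r}P_j=\langle f_r(c_r)\rangle$. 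If $\langle f_r(c_r)\rangle\not\subseteq P_j$ for even one $j\neq r$, that term collapses (via Lemma \ref{Product = Intersection}(i)) to $\langle f_r(c_r)\rangle\cap\bigcap_{j\neq r}P_j$, a possibly proper subideal, and the product identity can fail. The claim is true but needs a separate argument, which the paper supplies: if $v_r\notin P_{j_1}$ and $v_r\notin P_{j_2}$, then every vertex of $E^0\setminus H_{j_1}$ and of $E^0\setminus H_{j_2}$ connects to $v_r$ (downward directedness plus the no-exit cycle $c_r$), so $E^0\setminus(H_{j_1}\cap H_{j_2})$ is downward directed and $P_{j_1}\cap P_{j_2}=I(H_{j_1}\cap H_{j_2},B_{H_{j_1}}\cap B_{H_{j_2}})$ is a graded prime ideal; since it equals the product $P_{j_1}P_{j_2}$, primeness forces $P_{j_1}\subseteq P_{j_2}$ or $P_{j_2}\subseteq P_{j_1}$, contradicting irredundancy. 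Insert this argument (in (d)$\Rightarrow$(e), as the paper does, or as a standalone lemma) before expanding $J_1\cdots J_kP_{k+1}\cdots P_m$; with it in place, the rest of your computation goes through.
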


\begin{proof}
Now (a) =%
$>$
(b) =%
$>$
(c), since every prime ideal is primary and primary ideals in $L$ are
irreducible, by Theorem \ref{Irred = primary =prime power}.

(c) =%
$>$
(d). Suppose $I=P_{1}\cdot\cdot\cdot P_{n}$ is \ a product of irreducible
ideals. Clearly $gr(I)\subseteq gr(P_{j})$ for all $j=1,\cdot\cdot\cdot,n$ and
so $gr(I)\subseteq%
{\displaystyle\bigcap\limits_{j=1}^{n}}
gr(P_{j})$. On the other hand, by Lemma \ref{Product = Intersection}, $%
{\displaystyle\bigcap\limits_{j=1}^{n}}
gr(P_{j})=%
{\displaystyle\prod\limits_{j=1}^{n}}
gr(P_{j})\subseteq I$ and is a graded ideal. So $%
{\displaystyle\bigcap\limits_{j=1}^{n}}
gr(P_{j})\subseteq gr(I)$, by Theorem \ref{R-2}. Thus $gr(I)=%
{\displaystyle\bigcap\limits_{j=1}^{n}}
gr(P_{j})=%
{\displaystyle\prod\limits_{j=1}^{n}}
gr(P_{j})$. Now, by Lemma \ref{gr(Primary) is prime}, each $gr(P_{j})$ is a
prime ideal. Thus $gr(I)$ is a product/intersection of (graded) prime ideals.

(d) =%
$>$
(e). Suppose $gr(I)=P_{1}\cdot\cdot\cdot P_{n}$ is a product of graded prime
ideals of $L$. By Lemma \ref{Product = Intersection}, $gr(I)=P_{1}\cap
\cdot\cdot\cdot\cap P_{n}$. If needed, remove appropriate ideals $P_{j}$ and
assume that $gr(I)=P_{1}\cap\cdot\cdot\cdot\cap P_{m}$ is an irredundant
intersection of graded prime ideals $P_{j}$ which are thus all distinct and
none contains the other ideals. Let $H=E^{0}\cap I$ and $S=\{v\in B_{H}%
:v^{H}\in I\}$ so that $gr(I)=I(H,S)$. By Theorem \ref{R-2}, the non-graded
ideal $I$ is of the form $I=I(H,S)+%
{\displaystyle\sum\limits_{t\in T}}
<f_{t}(c_{t})>$, where $T$ is some index set and, for each $t$, $c_{t}$ is a
cycle without exits in $E\backslash(H,S)$ based at a vertex $v_{t}$ and
$c_{t}^{0}\cap c_{s}^{0}=\emptyset$ if $t\neq s$. Now, for each $t\in T$,
there must exist an index $j_{t}$ depending on $t$, such that $c_{t}\notin
P_{j_{t}}$. Because, otherwise, $c_{t}\in%
{\displaystyle\bigcap\limits_{i=1}^{m}}
P_{i}=gr(I)=I(H,S)$, a contradiction. Let $P_{j_{t}}\cap E^{0}=H_{j_{t}}$.
Then $E^{0}\backslash H_{j_{t}}$ is downward directed, as $P_{j_{t}}$ is a
prime ideal. Now $v_{t}\in E^{0}\backslash H_{j_{t}}$ and, since $c_{t}$ is a
cycle without exits in $E^{0}\backslash H_{j_{t}}$, we have $u\geq v_{t}$ for
all $u\in E^{0}\backslash H_{j_{t}}$. From the description of the prime ideals
in Theorem \ref{R-1}, we then conclude that $P_{j_{t}}=I(H_{j_{t}}%
,B_{H_{j_{t}}})$. We claim that $v_{t}\in P_{j}$ for all $j\neq j_{t}$.
Suppose, on the contrary, $v_{t}\notin P_{j}$ for some $j\neq j_{t}$. Let
$H_{j}=P_{j}\cap E^{0}$. Since $E^{0}\backslash H_{j}$ is downward directed,
we have $u\geq v_{t}$ for every $u\in E^{0}\backslash H_{j}$ and
$P_{j}=I(H_{j},B_{H_{j}})$. If $P^{\prime}=P_{j_{t}}\cap P_{j}$ with
$H^{\prime}=P^{\prime}\cap E^{0}$, then every $u\in E^{0}\backslash H^{\prime
}=E^{0}\backslash(H_{j_{t}}\cap H_{j})=(E^{0}\backslash H_{j_{t}})\cup
(E^{0}\backslash H_{j})$ satisfies $u\geq v_{t}$. This means that
$E^{0}\backslash H^{\prime}$ is downward directed and $B_{H^{\prime}%
}=B_{H_{j_{t}}}\cap B_{H_{j}}$. Hence $P^{\prime}=I(H^{\prime},B_{H^{\prime}%
})$ is a prime ideal. But then $P_{j_{t}}\cdot P_{j}=P_{j_{t}}\cap
P_{j}=P^{\prime}$ implies $P_{j_{t}}\subseteq P^{\prime}$ or $P_{j}\subseteq
P^{\prime}$. This means either $P_{j_{t}}\subseteq P_{j}$ or $P_{j}\subseteq
P_{j_{t}}$, contradicting the fact that $P_{1}\cap\cdot\cdot\cdot\cap P_{m}$
is an irredundant intersection. Hence, for each $t\in T$, $v_{t}\notin
P_{j_{t}}$ but $v_{t}\in P_{j}$ for all $j\neq j_{t}$. Also, if $s\in T$ with
$s\neq t$ (so $c_{s}\neq c_{t}$), then $P_{j_{s}}\neq P_{j_{t}}$. Thus
$|T|\leq m$, the number of prime ideals $P_{j}$. Hence $T$ must be a finite
set. Thus $I/gr(I)$ is generated by the finite set $\{f_{j}(c_{j}):j\in
T\subseteq\{1,\cdot\cdot\cdot,m\}\}$. Now each $<f_{j}(c_{j})>$ is an ideal in
the ideal $A_{j}$ generated by the vertices on the cycle $c_{j}$. It was shown
in the proof of Theorem \ref{Ideal generated by c^0- c cycle withour exts}
that $%
{\displaystyle\sum\limits_{{}}}
A_{j}=%
{\displaystyle\bigoplus\limits_{{}}}
A_{j}$. Hence $I/gr(I)=%
{\displaystyle\bigoplus\limits_{r=1}^{k}}
<f_{r}(c_{r})>$ where $k\leq m$. This proves (e).

(e) =%
$>$
(a). Suppose $gr(I)=P_{1}\cap\cdot\cdot\cdot\cap P_{m}$ is an irredundant
intersection of graded prime ideals $P_{j}$ and $I/gr(I)=%
{\displaystyle\bigoplus\limits_{r=1}^{k}}
<f_{r}(c_{r})>$ where $k\leq m$ and, for each $r=1\cdot\cdot\cdot k$, $c_{r}$
is a cycle without exits based at a vertex $v_{r}$ in $E^{0}\backslash(I\cap
E^{0})$ and $f_{r}(x)\in K[x]$. Thus $I=(P_{1}\cap\cdot\cdot\cdot\cap P_{m})+%
{\displaystyle\sum\limits_{r=1}^{k}}
<f_{r}(c_{r})>$. From the proof of (d) =
$>$
(c), we can assume, after re-indexing, that for each $r$, $v_{r}\notin P_{r}$
and that $v_{r}\in P_{s}$ for all $s\neq r$. For each $r=1,\cdot\cdot\cdot,k$,
define $Q_{r}=P_{r}+<f_{r}(c_{r})>$. By Proposition
\ref{gr(I) prime implies I product of primes}, each ideal $Q_{r}$ is a product
of prime ideals. So we are done if we show that%
\[
(P_{1}\cap\cdot\cdot\cdot\cap P_{m})+%
{\displaystyle\sum\limits_{r=1}^{k}}
<f_{r}(c_{r})>=Q_{1}\cdot\cdot\cdot Q_{k}P_{k+1}\cdot\cdot\cdot P_{m}.
\]
We prove this by induction on $k$. Suppose $k=1$. Consider $Q_{1}P_{2}%
\cdot\cdot\cdot P_{m}$. Now $A=P_{2}\cdot\cdot\cdot P_{m}=P_{2}\cap\cdot
\cdot\cdot\cap P_{m}$ is a graded ideal and $v_{1}\in A$. Using Lemma
\ref{Product = Intersection} and the fact that $<f_{1}(c_{1})>\subseteq A$, we
get
\begin{align*}
Q_{1}P_{2}\cdot\cdot\cdot P_{m}  &  =Q_{1}A=Q_{1}\cap A=(P_{1}+<f_{1}%
(c_{1})>)\cap A\\
&  =(P_{1}\cap A)+<f_{1}(c_{1})>\\
&  =(P_{1}\cap\cdot\cdot\cdot\cap P_{m})+<f_{1}(c_{1})>.
\end{align*}
Suppose $k>1$ and assume that the statement holds for $k-1$ so that
\[
Q_{1}\cdot\cdot\cdot Q_{k-1}P_{k}\cdot\cdot\cdot P_{m}=(P_{1}\cap\cdot
\cdot\cdot\cap P_{m})+%
{\displaystyle\sum\limits_{r=1}^{k-1}}
<f_{r}(c_{r})\text{.}%
\]
Then
\begin{align*}
&  Q_{1}\cdot\cdot\cdot Q_{k}P_{k+1}\cdot\cdot\cdot P_{m}\\
&  =Q_{1}\cdot\cdot\cdot Q_{k-1}(P_{k}+<f_{k}(c_{k})>)P_{k+1}\cdot\cdot\cdot
P_{m}\\
&  =Q_{1}\cdot\cdot\cdot Q_{k-1}P_{k}\cdot\cdot\cdot P_{m}+Q_{1}\cdot
\cdot\cdot Q_{k-1}(<f_{k}(c_{k})>)P_{k+1}\cdot\cdot\cdot P_{m}\\
&  =(P_{1}\cap\cdot\cdot\cdot\cap P_{m})+%
{\displaystyle\sum\limits_{r=1}^{k-1}}
<f_{r}(c_{r})>+(<f_{k}(c_{k})>)\text{,}%
\end{align*}
due to the fact that $<f_{k}(c_{k})>\subseteq P_{j}$ for all $j\neq k$ and
that
\[
<f_{k}(c_{k})>P_{j}=P_{j}<f_{k}(c_{k})>=<f_{k}(c_{k})>,
\]
by Lemma \ref{Product = Intersection}(i). This shows that $I$ is a product of
prime ideals, thus proving (a).
\end{proof}

\textbf{Remark}: It is clear from the above theorem that if a graded ideal
$I(H,S)$ is a product of prime ideals, then there will necessarily be at most
finitely many cycles without exits in $E\backslash(H,S)$.

As an application of the above theorem, we obtain the following propositions.

\begin{proposition}
\label{Graph Finite}Let $E$ be a finite graph, or more generally, let $E^{0}$
be finite. Then every ideal of $L=L_{K}(E)$ is a product of prime ideals.
\end{proposition}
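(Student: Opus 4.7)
The plan is to reduce the claim to Theorem \ref{gr(I) product of primes => I product of primes} by first showing that for every ideal $I$ of $L$ the graded part $gr(I)$ is a product of graded prime ideals. Once that is in hand, the graded case is trivial and the non-graded case is handled by the equivalence (d) $\Leftrightarrow$ (a) of that theorem.

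Since $E^{0}$ is finite, it has only finitely many hereditary saturated subsets $H$, and for each such $H$ the set $B_{H}\subseteq E^{0}\setminus H$ is also finite; hence the family $\{I(H,S)\}$ of graded ideals of $L$ is finite. Writing $gr(I)=I(H,S)$, the quotient $L/gr(I)\cong L_{K}(E\setminus(H,S))$ is a Leavitt path algebra of a graph whose vertex set is again finite, and it therefore admits only finitely many graded ideals and, in particular, only finitely many graded prime ideals. By the Preliminaries the prime (Baer) radical of every Leavitt path algebra vanishes, so the intersection of all prime ideals of $L/gr(I)$ is $0$. Using Theorem \ref{R-2}(b), for every prime $\bar{P}$ of $L/gr(I)$ the ideal $gr(\bar{P})$ is itself a graded prime with $gr(\bar{P})\subseteq\bar{P}$; combining these observations gives that the intersection of all graded prime ideals of $L/gr(I)$ is already zero. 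Since there are only finitely many such graded primes, this is a finite intersection, and it lifts back to $gr(I)=P_{1}\cap\cdots\cap P_{m}$ for finitely many graded prime ideals $P_{j}\supseteq gr(I)$ in $L$. By Lemma \ref{Product = Intersection}(iii), this intersection coincides with the product, so $gr(I)=P_{1}\cdots P_{m}$ is a product of graded primes.

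If $I$ is itself graded, then $I=gr(I)=P_{1}\cdots P_{m}$ and we are done. Otherwise $I$ is non-graded, condition (d) of Theorem \ref{gr(I) product of primes => I product of primes} is satisfied, and the equivalence (d) $\Leftrightarrow$ (a) of that theorem yields that $I$ is a product of prime ideals. The main obstacle is the second paragraph: forcing the intersection of graded primes above $gr(I)$ to be both finite and equal to $gr(I)$. The finiteness is supplied by $|E^{0}|<\infty$, which makes the graded ideal lattice of $L/gr(I)$ finite; the collapse down to $gr(I)$ (rather than to something strictly larger) is what requires combining the vanishing of the prime radical with the structural fact (Theorem \ref{R-2}(b)) that $gr(\bar{P})$ is a graded prime whenever $\bar{P}$ is prime.
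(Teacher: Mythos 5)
Your proof is correct and follows essentially the same route as the paper: reduce to showing every graded ideal is a finite intersection of graded prime ideals, obtain the intersection from the vanishing of the prime radical of $L/gr(I)$ together with the fact that $gr(\bar{P})$ is a graded prime inside each prime $\bar{P}$, get finiteness from $|E^{0}|<\infty$, convert the intersection to a product via Lemma \ref{Product = Intersection}, and invoke Theorem \ref{gr(I) product of primes => I product of primes} for the non-graded case. The only difference is cosmetic: you spell out the passage from ``all primes'' to ``all graded primes'' via Theorem \ref{R-2}(b), which the paper leaves implicit.
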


\begin{proof}
In view of Theorem \ref{gr(I) product of primes => I product of primes}, we
need only to show that every graded ideal $I$ of $L$ is an intersection of
finitely many prime ideals of $L$. Let $I\cap E^{0}=H$. Since $L/I\cong
L_{K}(E\backslash H)$ is a Leavitt path algebra, its prime radical is $0$, and
so $I=\cap\{P:P$ prime ideal $\supseteq I\}$ = $\cap\{P:P$ graded prime ideal
$\supseteq I\}$. Since $(E\backslash H)^{0}$ is finite, there are only
finitely many hereditary saturated subsets of $(E\backslash H)^{0}$ and so
there are only finitely many graded ideals in $L/I$. This means that $I$ is an
intersection of finitely many graded prime ideals.
\end{proof}

\begin{proposition}
\label{L Artinian} Suppose $L$ is (a) two-sided artinian or (b) two-sided
noetherian. Then every ideal of $L$ is a product of prime ideals.
\end{proposition}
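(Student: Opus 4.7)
The plan is to reduce the proposition to Theorem \ref{gr(I) product of primes => I product of primes}, which characterizes ideals that are products of primes in terms of their graded parts. By that theorem (applied to non-graded ideals) together with Lemma \ref{Product = Intersection}(iii) (which handles the graded case, since every intersection of graded ideals is a product), it suffices to show that, under either hypothesis, for every ideal $I$ of $L$ the graded part $gr(I)$ is a \emph{finite} intersection of graded prime ideals. Since $gr(I)$ is graded, the quotient $L/gr(I) \cong L_K(E\setminus(H,S))$ is itself a Leavitt path algebra and so has zero prime (Baer) radical; hence $gr(I)$ is automatically the intersection of all prime ideals of $L$ containing it, and the only question is finiteness of this intersection.

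For case (a), the quotient $L/gr(I)$ is artinian with zero Jacobson radical, hence semisimple artinian, hence a finite direct product of simple artinian rings; its zero ideal is therefore the intersection of finitely many maximal (and in particular prime) ideals. For case (b), $L/gr(I)$ is a semiprime ring satisfying the ACC on two-sided ideals, and by the classical finiteness theorem for minimal primes of a semiprime noetherian ring (see, e.g., \cite{Lam}), the zero ideal of $L/gr(I)$ is the intersection of its finitely many minimal primes. Pulling back in either case yields $gr(I) = P_1 \cap \cdots \cap P_m$ with each $P_i$ a prime ideal of $L$ minimal over $gr(I)$.

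Finally, I would observe that each such $P_i$ is automatically graded: by Theorem \ref{R-2}(b), $gr(P_i)$ is a prime ideal with $gr(I) \subseteq gr(P_i) \subseteq P_i$, and minimality of $P_i$ over $gr(I)$ forces $gr(P_i) = P_i$. Consequently, by Lemma \ref{Product = Intersection}(iii), $gr(I) = P_1 \cdots P_m$ is a product of graded primes, and Theorem \ref{gr(I) product of primes => I product of primes} now concludes that $I$ itself is a product of prime ideals. The main obstacle is really the noetherian case, where one must invoke the nontrivial classical fact that a semiprime noetherian ring has only finitely many minimal primes; the rest is a clean assembly of results already established in the preceding sections.
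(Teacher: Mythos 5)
Your overall reduction --- show that for every ideal $I$ the graded part $gr(I)$ is a finite intersection of graded prime ideals, convert the intersection to a product via Lemma \ref{Product = Intersection}(iii), and invoke Theorem \ref{gr(I) product of primes => I product of primes} --- is exactly the paper's strategy, and your observation that a prime minimal over $gr(I)$ is automatically graded (via Theorem \ref{R-2}(b)) is correct. Case (b) is also essentially right: the classical fact that a ring with ACC on two-sided ideals has only finitely many minimal primes, whose intersection is the prime radical, is precisely what the paper proves by hand with the standard maximal-counterexample argument, so citing it is legitimate.

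The gap is in case (a). ``Two-sided artinian'' here means DCC on two-sided ideals, not on one-sided ideals, so you cannot conclude that $L/gr(I)$ is semisimple artinian from the vanishing of its Jacobson radical: the Hopkins--Wedderburn machinery requires the descending chain condition on left (or right) ideals. The paper's own Example \ref{Pyramid graph} shows the distinction is real: $L_{K}(F)$ there is two-sided artinian, prime, and has an infinite strictly ascending chain of two-sided ideals, hence is very far from semisimple artinian (a semisimple artinian ring has only finitely many two-sided ideals, and one-sided artinian would force two-sided noetherian, which that example refutes). The correct argument, which is the one the paper gives, is a direct minimality argument on the lattice of two-sided ideals: if no finite intersection of primes of $L/gr(I)$ were zero, pick by DCC a minimal nonzero finite intersection $M$ of primes; for every prime $P$, $M\cap P$ is again such an intersection and is nonzero by assumption, so minimality forces $M\cap P=M$, i.e.\ $M$ lies in every prime ideal and hence in the prime radical, which is zero --- a contradiction. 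Replacing your semisimplicity step with this argument repairs case (a); the rest of your proof then goes through.
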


\begin{proof}
Assume (a). We first show that $\{0\}$ is an intersection of finitely many
prime ideals. Suppose, on the contrary, no intersection of finitely many prime
ideals of $L$ is $0$. In particular, $\{0\}$ is not a prime ideal. Let
$\boldsymbol{F}=\{Q:Q$ is a non-zero intersection of finitely many prime
ideals$\}$. Note that $\mathbf{F}$ is non-empty since, as observed in Remark
2.9 in \cite{EER} (also from the proof of Proposition 6.5 below), every
Leavitt path algebra always contains a prime ideal. Let $M$ be a minimal
element of $\mathbf{F}$, say $M=P_{1}\cap\cdot\cdot\cdot\cap P_{m}\neq\{0\}$.
Now, for any prime ideal $P$, $M\cap P\neq\{0\}$, by our supposition. So by
the minimality of $M$, $M\cap P=M$ for every prime ideal $P$. This means $M=%
{\displaystyle\bigcap\limits_{P\text{ prime ideal of }L}}
P$ which is $\{0\}$ as the prime radical of $L$ is zero. This contradiction
shows that $\{0\}$ is an intersection of finitely many prime ideals. Now for
every graded ideal $I=I(H,S)$, $L/I\cong L_{K}(E\backslash(H,S))$ is a Leavitt
path algebra which is also two-sided artinian and, by the above argument,
there are finitely many prime ideals in $L/I$ whose intersection is zero. This
means that every graded ideal $I$ is an intersection (and, by Lemma
\ref{Product = Intersection}, a product) of finitely many prime ideals. By
Theorem \ref{gr(I) product of primes => I product of primes}, every ideal in
$L$ is then a product of prime ideals.

Assume (b), so the ideals of $L$ satisfy the ascending chain condition. In
view of Theorem \ref{gr(I) product of primes => I product of primes}, we need
to only to show that every graded ideal $I$ in $L$ is a product of prime
ideals. Since any graded homomorphic image of $L$ is also a two-sided
noetherian Leavitt path algebra, it is enough to show that $\{0\}$ is a
product of finitely many prime ideals in $L$. If $\{0\}$ is a prime ideal, we
are done. Otherwise, we wish to show that $\{0\}$ is a product of finitely
many minimal prime ideals in $L$. We shall use the usual argument given in
such situations in the study of commutative rings. Assume the contrary.
Consider the set%
\[
\mathbf{X}=\{A:A\neq0,A\text{ a product of finitely many minimal prime ideals
in }L\}.
\]
Let
\[
\mathbf{C=\{}J:J\text{ is an ideal of }L\text{ such that }A\nsubseteqq J\text{
for all }A\in\mathbf{X}\}.
\]
Now $\mathbf{C\neq\emptyset}$, as $\{0\}\in\mathbf{C}$. Since the ascending
chain condition holds, we appeal to Zorn's Lemma to get a maximal element $M$
of $\mathbf{C}$. We claim that $M$ is a prime ideal. To see this, suppose
$a\notin M$ and $b\notin M$ so that $M+LaL\supseteq A$ and $M+LbL\supseteq B$
for some $A,B\in\mathbf{X}$. Then $AB\subseteq(M+LaL)(M+LbL)\subseteq
M+LaLbL$. Now, by our assumption, $AB\in\mathbf{X}$ and so $AB\nsubseteqq M$.
Consequently, $aLb\nsubseteqq M$. This shows that $M$ is a prime ideal of $L$.
If $P$ is a minimal prime ideal inside $M$, then $P\neq\{0\}$ (as $\{0\}$ is
not a prime ideal) and so $P$ satisfies $P\in\mathbf{X}$. Since we also have
$P\subseteq M$, we reach a contradiction. $\ $Thus $\{0\}$ is a product of
finitely many (minimal) prime ideals.
\end{proof}

\textbf{Note}: (i) In \cite{C}, it was shown that the Leavitt path algebra of
a finite graph is two-sided noetherian. Using this, one can also derive
Proposition \ref{Graph Finite} from Proposition \ref{L Artinian}.

(ii) The reader might wonder if the descending chain condition on ideals of
$L$ implies the ascending chain condition in which case Proposition 6.4 (b)
will follow from Proposition 6.4 (a). However, these two concepts are
independent for Leavitt path algebras. Indeed, for the graph $F$ in Example
7.2 below, $L_{K}(F)$ is two-sided artinian, but is not two-sided noetherian.
Likewise, if $E$ is the graph with a single vertex $v$ and a loop $c$ based at
$v$, then $L_{K}(E)\cong K[x,x^{-1}]$ is a (commutative) noetherian ring, but
is not an artinian ring.

In the case when $E$ is an arbitrary graph, we have the following description
of the Leavitt path algebra $L=L_{K}(E)$ in which every ideal is a product of
prime ideals.

\begin{proposition}
\label{Prime factorization in LPA} Let $E$ be an arbitrary graph and let
$L=L_{K}(E)$. Then every proper ideal of $L$ is a product of prime ideals if
and only if, every homomorphic image of $L$ is either a prime ring or contains
only finitely many minimal prime ideals.
\end{proposition}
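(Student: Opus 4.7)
The plan is to prove the two implications separately, with the forward direction being a short minimality argument and the backward direction reducing, via $gr(I)$, to Theorem \ref{gr(I) product of primes => I product of primes}.

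For the forward direction, suppose every proper ideal of $L$ is a product of primes. Given an ideal $I$ with $L/I$ not prime (so $I$ itself is not prime), I would write $I=P_{1}\cdots P_{n}$ with each $P_{i}$ a prime ideal of $L$. Since $P_{1}\cdots P_{n}\subseteq P_{i}$, each $P_{i}$ contains $I$, and any prime $Q$ of $L$ minimal over $I$ contains $P_{1}\cdots P_{n}$, hence contains some $P_{i}$; minimality forces $Q=P_{i}$. Thus the minimal primes of $L/I$ are a subset of the images of $\{P_{1},\ldots,P_{n}\}$, and in particular are finite in number.

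For the backward direction, take a proper ideal $I$ and look at the Leavitt path algebra $L/gr(I)\cong L_{K}(E\setminus(H,S))$, where $gr(I)=I(H,S)$. By hypothesis this quotient is either prime or has only finitely many minimal primes. If it is prime, then $gr(I)$ is a prime ideal: either $I=gr(I)$ and $I$ itself is prime, or $I$ is non-graded and Proposition \ref{gr(I) prime implies I product of primes} expresses $I$ as a product of primes. Otherwise, let $\bar{P}_{1},\ldots,\bar{P}_{m}$ be the minimal primes of $L/gr(I)$. Each $\bar{P}_{j}$ lies over the graded ideal $gr(I)$, so by Theorem \ref{R-2}(b) the largest graded ideal inside $\bar{P}_{j}$ is itself prime and is contained in $\bar{P}_{j}$; minimality forces $\bar{P}_{j}$ to be graded. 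Since the prime radical of a Leavitt path algebra is zero, $\bigcap_{j}\bar{P}_{j}=0$, and Lemma \ref{Product = Intersection}(iii) upgrades this intersection of graded ideals into a product. Pulling back to $L$ and discarding redundancies, I obtain $gr(I)=P_{1}\cap\cdots\cap P_{m'}=P_{1}\cdots P_{m'}$ as an irredundant intersection of graded prime ideals of $L$.

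To finish by Theorem \ref{gr(I) product of primes => I product of primes}(e), I need $I/gr(I)$ to be generated by at most $m'$ elements. By Theorem \ref{R-2}, $I/gr(I)=\bigoplus_{t\in T}\langle f_{t}(c_{t})\rangle$, where each $c_{t}$ is a cycle without exits in $E\setminus(H,S)$ and the vertex sets $c_{t}^{0}$ are pairwise disjoint. The main obstacle is the bound $|T|\le m'$, and my plan is as follows: for each $t$, the base vertex $v_{t}$ of $c_{t}$ cannot lie in $gr(I)=\bigcap_{j}P_{j}$, so there exists $j_{t}$ with $v_{t}\notin P_{j_{t}}$. If two distinct cycles $c_{t},c_{s}$ produced the same index $j$, then by Theorem \ref{R-1} the set $(E\setminus(H,S))^{0}\setminus(P_{j}\cap E^{0})$ would be downward directed and contain both $v_{t}$ and $v_{s}$, producing a common descendant $w$; but a cycle without exits traps all descendants of its vertices on the cycle itself, so $w$ would have to lie in $c_{t}^{0}\cap c_{s}^{0}=\emptyset$, a contradiction. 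Hence $t\mapsto j_{t}$ is injective, $|T|\le m'$, and Theorem \ref{gr(I) product of primes => I product of primes}(e) yields that $I$ is a product of prime ideals.
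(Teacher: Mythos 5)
Your proof is correct and follows essentially the same route as the paper: the forward direction is the identical minimality argument, and the backward direction likewise reduces to showing that $gr(I)$ is an irredundant intersection (hence product) of the finitely many graded minimal primes of $L/gr(I)$ and then invokes Theorem \ref{gr(I) product of primes => I product of primes}. The only differences are cosmetic: you deduce $\bigcap_{j}\bar{P}_{j}=0$ directly from the vanishing of the prime radical (the paper re-derives this by constructing a prime ideal maximal with respect to excluding a vertex of the intersection), and your final paragraph re-proves the bound $|T|\leq m'$, which is unnecessary since the implication (d)$\Rightarrow$(a) of that theorem already yields the conclusion once $gr(I)$ is known to be a product of graded primes.
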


\begin{proof}
Assume that every homomorphic image of $L$ is a prime ring or contains only
finite number of minimal prime ideals. In view of Theorem
\ref{gr(I) product of primes => I product of primes}, we need only to show
that every graded ideal is a product/intersection of finitely many prime
ideals. Suppose $I=I(H,S)$ is a graded ideal of $L$. If $\bar{L}=L/I$ is a
prime ring, then $I$ is a prime ideal and we are done. Suppose, $I$ is not a
prime ideal. Now $\bar{L}=L/I\cong L_{K}(E\backslash(H,S))$ and by hypothesis,
contains only finitely many minimal prime ideals $P_{1},\cdot\cdot\cdot,P_{n}$
which clearly must all be graded as $gr(P_{j})$ is also a prime ideal for each
$j$ (Theorem \ref{R-2}(b)). We claim that their intersection must be zero.
Suppose, on the contrary, $A=P_{1}\cap\cdot\cdot\cdot\cap P_{n}\neq0$. Now $A$
is a non-zero graded ideal of $L_{K}(E\backslash(H,S))$ and so $A$ contains a
vertex $u\in E\backslash(H,S)$. Let $P$ be an ideal of $\bar{L}$ maximal with
respect to the property that $u\notin P$. We claim that $P$ is a prime ideal.
To see this, suppose $a\notin P$ and $b\notin P$ are elements of $\bar{L}$. So
$u\in\bar{L}a\bar{L}+P$ and $u\in\bar{L}b\bar{L}+P$. Then $u=u^{2}\in(\bar
{L}a\bar{L}+P)(\bar{L}b\bar{L}+P)=\bar{L}a\bar{L}b\bar{L}+P$. Since $u\notin
P$, this implies $a\bar{L}b\notin P$. Hence $P$ is a prime ideal of $\bar
{L}\footnote{I thank Zak Mesyan for pointing out this argument}$. As $P$ must
contain one of the minimal prime ideals $P_i$, we have $u\in P_i\subseteq P$,
a contradiction. Thus $P_1\cap\cdot\cdot\cdot\cap P_n=0$ and we conclude that
$I$ is the intersection and hence a product of the pre-images of the
$P_1,\cdot\cdot\cdot,P_n$ in $L$ all of which are prime ideals in $L$.

Conversely, suppose every ideal $I$ of $L$ is a product of prime ideals.
Consider the factor ring $L/I$. If $I$ is a prime ideal, then $L/I$ is a prime
ring. Otherwise, by hypothesis, $I=$ $P_{1}\cdot\cdot\cdot P_{n}$ where each
$P_{j}$ is a prime ideal. Then, in $L/I$, $\bar{P}_{1}\cdot\cdot\cdot\bar
{P}_{n}=0$, where $\bar{P}_{j}=P_{j}/I$. If $Q$ is a minimal prime ideal in
$L/I$, then $\bar{P}_{1}\cdot\cdot\cdot\bar{P}_{n}=0\in Q$ implies that, for
some $j$, $\bar{P}_{j}\subseteq Q$. By minimality, $Q=\bar{P}_{j}$. This shows
that $L/I$ contains only finitely many minimal prime ideals.
\end{proof}

\section{Examples}

We next construct various graphs illustrating the results obtained in the
preceding sections. These examples are also used to examine whether some of
the well-known theorems in commutative rings, such as the Cohen's theorem on
prime ideals \ and theorems on ZPI rings, hold for Leavitt path algebras.

\begin{example}
\label{N-Lttice graph} Consider the following \textquotedblleft$%
\mathbb{N}
\times%
\mathbb{N}
$-Lattice\textquotedblright\ graph $E$ where the vertices in $E$ are points in
the first quadrant of the coordinate plane whose coordinates are integers
$\geq0$. Specifically, $E^{0}=\{(m,n):m,n\in%
\mathbb{Z}
$ with $m,n\geq0\}$. Every vertex $(m,n)$ emits two edges connecting $(m,n)$
with $(m+1,n)$ and $(m,n+1)$.%

\[%
\begin{array}
[c]{cccccccc}%
\vdots &  & \vdots &  & \vdots &  & \vdots & \\
\uparrow &  & \uparrow &  & \uparrow &  & \uparrow & \\
\bullet_{(0,3)} & \longrightarrow & \bullet_{(1,3)} & \longrightarrow &
\bullet_{(2,3)} & \longrightarrow & \bullet_{(3,3)} & \rightarrow\cdot
\cdot\cdot\\
\uparrow &  & \uparrow &  & \uparrow &  & \uparrow & \\
\bullet_{(0,2)} & \longrightarrow & \bullet_{(1,2)} & \longrightarrow &
\bullet_{(2,2)} & \longrightarrow & \bullet_{(3,2)} & \rightarrow\cdot
\cdot\cdot\\
\uparrow &  & \uparrow &  & \uparrow &  & \uparrow & \\
\bullet_{(0,1)} & \longrightarrow & \bullet_{(1,1)} & \longrightarrow &
\bullet_{(2,1)} & \longrightarrow & \bullet_{(3,1)} & \rightarrow\cdot
\cdot\cdot\\
\uparrow &  & \uparrow &  & \uparrow &  & \uparrow\vdots & \\
\bullet_{(0,0)} & \longrightarrow & \bullet_{(1,0)} & \longrightarrow &
\bullet_{(2,0)} & \longrightarrow & \bullet_{(3,0)} & \rightarrow\cdot
\cdot\cdot
\end{array}
\]

\end{example}

Now $E$ is a row finite graph and and contains no cycles. So the ideals of
$L=L_{K}(E)$ are generated by vertices in $E$. If $A$ is a non-zero proper
ideal of $L$, then $A$ is a principal ideal. To see this, suppose $(i,j)\in A$
such that $i+j=k$ is the smallest integer, then it is easy to check that $A$
contains the entire quadrant $Q_{ij}=\{(m,n):m\geq i,n\geq j\}$. It is also
easy to verify that in this case, no other vertex $(m,n)$ satisfying $m+n=k$
will be in $A$. Thus $A=Q_{ij}$ is the principal ideal generated by the single
vertex $(i,j)$. Moreover, if $(i,j)\neq(m,n)$ are different vertices, then the
ideals $<(i,j)>\neq<(m,n)>$. We list below the various properties of $L$:

(a) $L$ is a prime ring.

(b) All the ideals of $L$ are graded.

(c) Every non-zero proper ideal of $L$ is a principal ideal generated by a
single vertex.

(d) An ideal $P$ is a prime ideal of $L$ if and only $P$ is generated by a
vertex "on the axis ", that is, is generated by a vertex of the form $(0,m)$
or $(m,0)$.

(d) Every ideal of $L$ is either a prime ideal or is a product of two prime
ideals. Indeed, if $A=<(m,n)>$ with $m\neq0$ and $n\neq0$, then $A=PQ=P\cap
Q$, where $P=<(m,0)>$ and $Q=<(0,n)>$.

\begin{example}
\label{Pyramid graph} Consider the following row-finite graph $F$ in which,
for each of the infinitely many $i$, there are two loops at the vertex $v_{i}$
and this indicated by $\underset{v_{i}}{\circlearrowleft\bullet
\circlearrowleft}$.%
\[%
\begin{array}
[c]{cccccccc}
& \cdot\cdot\cdot & \longleftarrow & \bullet_{u_{3}} & \longleftarrow &
\bullet_{u_{2}} & \longleftarrow & \bullet_{u_{1}}\\
&  &  & \downarrow &  & \downarrow &  & \downarrow\\
& \cdot\cdot\cdot & \longrightarrow & \underset{v_{3}}{\circlearrowleft
\bullet\circlearrowleft} & \longrightarrow & \underset{v_{2}}{\circlearrowleft
\bullet\circlearrowleft} & \longrightarrow & \underset{v_{1}}{\circlearrowleft
\bullet\circlearrowleft}\\
&  &  & \uparrow &  & \uparrow &  & \uparrow\\
& \cdot\cdot\cdot & \longleftarrow & \bullet_{w_{3}} & \longleftarrow &
\bullet_{w_{2}} & \longleftarrow & \bullet_{w_{1}}%
\end{array}
\]
Now $F$ satisfies Condition (K) and so all the ideals of $L_{K}(F)$ are
graded. Also $F^{0}$ is downward directed and so $\{0\}$ is a prime ideal. For
each $n\geq1$, $H_{n}=\{v_{1},\cdot\cdot\cdot,v_{n}\}$ is a hereditary
saturated set and $E^{0}\backslash H_{n}$ is downward directed. Hence
$P_{n}=<H_{n}>$ is a prime ideal and we get an ascending chain of prime
ideals
\[
0\subset P_{1}\subset\cdot\cdot\cdot\subset P_{n}\subset\cdot\cdot\cdot
\qquad\qquad\qquad(\ast\ast).
\]
Let $P_{\omega}=%
{\displaystyle\bigcup\limits_{n\in\mathbb{N}}}
P_{n}$. Let $M_{1}=<u_{1}>$ and $M_{2}=<w_{1}>$ be the ideals generated by
$u_{1},w_{1}$ respectively in $L_{k}(F)$. It is straightforward to verify the following:

(a) The non-zero proper ideals of $L_{K}(F)$ are precisely the (graded) ideals
$P_{n}(n\geq1)$, $P_{\omega}$, $M_{1}$ and $M_{2}$.

(b) All the non-zero ideals of $L_{K}(F)$, other than $P_{\omega}$, are prime
ideals. The ideal $P_{\omega}$ is not a prime ideal since $E^{0}\backslash H$
is not downward directed, where $H=P_{\omega}\cap E^{0}=\{v_{n}:n\geq1\}$.
However, $P_{\omega}=M_{1}\cap M_{2}=M_{1}M_{2}$ is a product/intersection of
two prime ideals. Thus $L_{K}(F)$ is a prime ring in which every ideal is a
product of at most two prime ideals.

(c) For each $n$, $P_{n}=<v_{n}>$ is a principal ideal generated by the vertex
$v_{n}$. Thus all the non-zero ideals of $L_{K}(F)$, other than $P_{\omega}$,
are principal ideals. $P_{\omega}$ is not finitely generated.

(d) $L_{K}(F)$ is a two-sided artinian ring, but is not two-sided noetherian.
\end{example}

Example \ref{Pyramid graph} is also an example to illustrate the following statements.

I) A well-known theorem of Cohen states that if $R$ is a commutative ring and
if every prime ideal of $R$ is finitely generated, then $R$ is a noetherian
ring. Example \ref{Pyramid graph} shows that Cohen's theorem does not hold for
the two-sided ideals in Leavitt path algebras (as every prime ideal of
$L_{K}(F)$ is a principal ideal, but the ideal $P_{\omega}$ is not finitely
generated). Also the chain of ideals $(\ast\ast)$ shows that the ascending
chain condition does not hold in $L_{K}(F)$.

II) It is known (\cite{LM}) that if $R$ is a commutative ring in which every
ideal is a product of prime ideals, then $R$ must be a noetherian ring. Such
rings are known as generalized ZPI rings and have been completely
characterized. Example \ref{Pyramid graph} shows the Leavitt path algebra
$L_{K}(F)$ is a generalized ZPI ring, but it is not two-sided noetherian.

III) In a commutative ring, the union of an ascending chain of prime ideals is
again a prime ideal. In the Example \ref{Pyramid graph}, $P_{\omega}$ is the
union of a countable ascending chain of prime ideals, but $P_{\omega}$ is not
a prime ideal of $L_{K}(F)$.

IV) As a passing remark, we point out that in the case of a commutative ring
$R$ with identity, the ascending chain condition on ideals of $R$ is
equivalent to every ideal of $R$ being finitely generated. Example
\ref{Pyramid graph} shows that this no longer holds in Leavitt path algebras
as is clear by considering $L^{\prime}=L_{K}(F^{\prime})$, the Leavitt path
algebra of the graph $F^{\prime}=F\backslash\lbrack\{u_{n}:n\geq1\}\cup
\{w_{n}:n\geq1\}]$, where every proper ideal of $L^{\prime}$ is a principal
ideal, but the ascending chain condition for ideals does not hold in
$L^{\prime}$.

V) Also, in a commutative ring, a product of two finitely generated ideals is
again finitely generated. But in the Leavitt path algebra $L_{K}(F)$ in
Example \ref{Pyramid graph}, $M_{1}$ and $M_{2}$ are principal ideals, but the
product $M_{1}M_{2}=P_{\omega}$ is not finitely generated.

We next give an example of an ideal in a Leavitt path algebra which cannot be
factored as a product of finitely many prime ideals.

\begin{example}
Let $E$ be a graph with $E^{0}=\{v,v_{1},\cdot\cdot\cdot,v_{n},\cdot\cdot
\cdot\}$ and

$E^{1}=\{c_{1},\cdot\cdot\cdot,c_{n},\cdot\cdot\cdot\}\cup\{e_{1},\cdot
\cdot\cdot,e_{n},\cdot\cdot\cdot\}$. Further, for each $i$,

$s(e_{i})=$ $v_{i}$ and $r(e_{i})=v$. Also each $c_{i}$ is a loop at $v_{i}$,
$s(c_{i})=r(c_{i})=v_{i}$.

Now $L_{K}(E)$ is a prime ring. The prime ideals of $L_{K}(E)$ are $\{0\}$
and, for each $i=1,2,\cdot\cdot\cdot$, the graded ideal $P_{i}=<\{v_{j}:j\neq
i\}>$ and the non-graded ideal $Q_{i}^{p(x)}=P_{i}+<p(c_{i})>$, for each
irreducible polynomial $p(x)\in K[x]$, . Now the graded ideal $A=<v>$ is not
an intersection of finitely many prime ideals. Then, by Theorem
\ref{gr(I) product of primes => I product of primes}, we have, for any
irreducible polynomial $p(x)\in K[x]$ and for any subset $S\subseteq
\{c_{1},\cdot\cdot\cdot,c_{n},\cdot\cdot\cdot\}$, the ideal

$I=A+%
{\displaystyle\sum\limits_{c_{i}\in S}}
<\{p(c_{i})>$ is then not a product of finitely many prime ideals.
\end{example}

\begin{acknowledgement}
I thank Bruce Olberding for his helpful comments and S\"{o}ngul Esin and Muge
Kanuni for their careful reading of this paper and corrections.
\end{acknowledgement}

\end{document}